\documentclass[10pt, reqno]{amsart}
\usepackage{amsmath, amssymb, amsthm, fancyhdr, verbatim, graphicx}
\usepackage{enumerate}
\usepackage[all]{xy}
\usepackage[T1]{fontenc} 
\usepackage{framed, hyperref}
\usepackage[OT2, T1]{fontenc}
\usepackage[titletoc]{appendix}
\usepackage{etex}
\usepackage[usenames,dvipsnames]{xcolor}
\usepackage{mathrsfs}
\usepackage{tikz-cd}

\numberwithin{equation}{subsection}

\DeclareSymbolFont{cyrletters}{OT2}{wncyr}{m}{n}
\DeclareMathSymbol{\Sha}{\mathalpha}{cyrletters}{"58}


\usepackage{color}




\newcommand{\F}{\mathbf{F}}

\newcommand{\G}{\mathbf{G}}

\newcommand{\wt}[1]{\widetilde{#1}}

\newcommand{\Q}{\mathbf{Q}}
\newcommand{\Z}{\mathbf{Z}}

\newcommand{\Gal}{\operatorname{Gal}}

\newcommand{\R}{\mathbf{R}}

\newcommand{\ul}[1]{\underline{#1}}
\newcommand{\ol}[1]{\overline{#1}}
\newcommand{\wh}[1]{\widehat{#1}}

\newcommand{\Cal}[1]{\mathcal{#1}}
\newcommand{\A}{\mathbf{A}}
\newcommand{\mbf}[1]{\mathbf{#1}}

\newcommand{\mrm}[1]{\mathrm{#1}}

\newcommand{\co}{\colon}

\DeclareMathOperator{\et}{\acute{e}t}

\DeclareMathOperator{\Ima}{Im\,}
\DeclareMathOperator{\rank}{rank}

\DeclareMathOperator{\Spec}{Spec\,}

\DeclareMathOperator{\End}{End}

\DeclareMathOperator{\Br}{Br}

\DeclareMathOperator{\Pic}{Pic}

\DeclareMathOperator{\Id}{Id}

\DeclareMathOperator{\pt}{pt}

\DeclareMathOperator{\Sq}{Sq}
\DeclareMathOperator{\nd}{nd}
\DeclareMathOperator{\tors}{tors}
\DeclareMathOperator{\odd}{odd}
\DeclareMathOperator{\even}{even}

\DeclareMathOperator{\cla}{cl}
\DeclareMathOperator{\Et}{\acute{E}t}

\DeclareMathOperator{\red}{red}
\DeclareMathOperator{\pr}{pr}

\newtheorem{thm}{Theorem}[section]
\newtheorem{lemma}[thm]{Lemma}
\newtheorem{prop}[thm]{Proposition}
\newtheorem{cor}[thm]{Corollary}

\newtheorem{conj}[thm]{Conjecture}

\theoremstyle{remark}
\newtheorem{remark}[thm]{Remark} 
\newtheorem{defn}[thm]{Definition}
\newtheorem{example}[thm]{Example}

\makeatletter
\def\th@remark{%
  \thm@headfont{\bfseries}%
  \normalfont 
}
\def\imod#1{\allowbreak\mkern5mu({\operator@font mod}\,\,#1)}
\makeatother

\widowpenalty=1000
\clubpenalty=1000

\setcounter{tocdepth}{1}

\title[\'{E}tale Steenrod operations and the Artin-Tate pairing]{ \'{E}tale Steenrod operations \\ and the Artin-Tate pairing }

\author{Tony Feng}

\begin{document}

\begin{abstract}
We prove a 1966 conjecture of Tate concerning the Artin-Tate pairing on the Brauer group of a surface over a finite field, which is the analogue of the Cassels-Tate pairing. Tate asked if this pairing is always alternating and we find an affirmative answer, which is somewhat surprising in view of the work of Poonen-Stoll on the Cassels-Tate pairing. Our method is based on studying a connection between the Artin-Tate pairing and (generalizations of) Steenrod operations in \'{e}tale cohomology. Inspired by an analogy to the algebraic topology of manifolds, we develop tools  allowing us to calculate the relevant \'{e}tale Steenrod operations in terms of characteristic classes.
\end{abstract}

\maketitle

\tableofcontents

\section{Introduction}\label{sec: introduction}
\subsection{Motivation}

Let $X$ be a smooth, projective, geometrically connected surface over $\F_q$, where $\mrm{char} \ \F_q = p$. For every prime $\ell \neq p$, M. Artin and Tate \cite{Tate66} defined a pairing 
\begin{equation}\label{Tate_pairing}
\langle  \cdot, \cdot \rangle_{\mrm{AT}}   \colon \Br(X)_{\nd}[\ell^{\infty}] \times \Br(X)_{\nd}[\ell^{\infty}] \rightarrow \Q/\Z
\end{equation}
where $\Br(X)_{\nd}$ denotes the quotient of the Brauer group $\Br(X)$ by its divisible part, and $\Br(X)_{\nd}[\ell^{\infty}] $ denotes its $\ell$-power torsion subgroup. (Conjecturally the divisible part vanishes, implying that $\Br(X)_{\nd} = \Br(X)$.) We will review the definition of \eqref{Tate_pairing} in \S \ref{subsec: Tate pairing}; we henceforth call it the \emph{Artin-Tate pairing}. 

Artin and Tate's investigation of $\Br(X)$ was motivated by a dictionary relating the invariants of $X$ to those appearing in the Birch and Swinnerton-Dyer Conjecture for abelian varieties over function fields. In particular, under this dictionary $\Br(X)$ corresponds to $\Sha$, and the Artin-Tate pairing corresponds to the Cassels-Tate pairing.

It is not difficult to show that the pairing $\langle  \cdot, \cdot \rangle_{\mrm{AT}} $ is skew-symmetric, but it is much less clear if it is \emph{alternating}. For clarity, we recall that skew-symmetric means that
\[ 
\langle x,y \rangle_{\mrm{AT}} + \langle y,x \rangle_{\mrm{AT}} = 0 \text{ for all } x,y \in \Br(X)_{\nd}[\ell^{\infty}] ,
\]
while alternating means the stronger condition that
\[
\langle x,x \rangle_{\mrm{AT}} = 0 \text{ for all } x \in \Br(X)_{\nd}[\ell^{\infty}] . 
\]
Since the distinction between skew-symmetric and alternating disappears for $\ell \neq 2$, the difficulty lies entirely in the case $\ell = 2$. In Tate's 1966 Bourbaki report on the Artin-Tate Conjecture, he asks \cite[after Theorem 5.1]{Tate66} if the pairing \eqref{Tate_pairing} is alternating, conjecturing that the answer is ``yes''.

\begin{conj}[Tate, 1966] \label{conj: Tate}
The Artin-Tate pairing is alternating. 
\end{conj}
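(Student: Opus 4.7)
The plan is to reduce the alternating property of \eqref{Tate_pairing} to the vanishing of a cohomological invariant, realize that invariant as an étale Steenrod-type operation applied to a lift of the Brauer class, and then evaluate the operation through an étale Wu formula in terms of characteristic classes of $X$.

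First I would observe that only $\ell = 2$ is at issue, and that because \eqref{Tate_pairing} is already skew-symmetric, the function $q(x) := \langle x, x \rangle_{\mrm{AT}}$ is automatically additive on $\Br(X)_{\nd}[2^\infty]$ and takes values in $\tfrac{1}{2}\Z/\Z$; it therefore suffices to show $q \equiv 0$, and by divisibility it further suffices to handle $x \in \Br(X)[2]$. I would then unwind $q(x)$ cohomologically: by the Kummer sequence $x$ lifts to $\wt{x} \in H^2(X_\et, \mu_2)$, well-defined modulo $\Pic(X)/2$, and the construction of \eqref{Tate_pairing} produces $\langle x, x \rangle_{\mrm{AT}}$ as $\beta(\wt{x}) \cup \wt{x}$ followed by the trace $H^5(X_\et, \mu_2^{\otimes 2}) \to \tfrac{1}{2}\Z/\Z$, where $\beta$ is the Bockstein attached to $0 \to \mu_2 \to \mu_4 \to \mu_2 \to 0$. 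Restricted to the diagonal, this is a quadratic refinement of a bilinear cup product, exactly the sort of invariant that is governed by a Steenrod square.

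The second step is to construct étale Steenrod operations $\Sq^i$ on $H^*(X_\et, \F_2)$ in enough generality, and then to prove an étale Wu formula: on the top cohomology of a smooth projective variety, each relevant $\Sq^i$ should equal cup product with a Wu class $v_i$, itself expressible as a polynomial in the étale Chern classes of $\Omega^1_{X/\F_q}$ reduced modulo $2$. The construction is modelled on the topological definition, built from the $\Sigma_2$-action on $X \times X$ and its diagonal, with $B\Sigma_2$ replaced by a simplicial (or pro-étale) model so that one can form $\Sigma_2$-equivariant étale cohomology. Once the formula $\Sq^i(-) = v_i \cup (-)$ holds in the relevant degree, one reads off $q(x) = \langle v_?, \wt{x} \rangle_X$ via Poincaré duality for $X/\F_q$.

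The final step is a characteristic-class calculation: compute the Wu class $v_?$ in terms of $c_i(\Omega^1_X)$ modulo $2$ and show it pairs trivially with every Brauer lift $\wt{x}$. A plausible mechanism is that $v_?$ lies in the image of $\Pic(X)/2 \hookrightarrow H^2(X_\et, \mu_2)$, whose contribution to $\langle\cdot,\cdot\rangle_{\mrm{AT}}$ is built to vanish. The hard part, by a wide margin, will be the second step: setting up étale Steenrod operations with sufficient naturality and proving a genuine Wu formula in the arithmetic setting, with Galois-equivariance, the Tate twists forced by Poincaré duality, and the cohomological dimension $5$ (rather than $4$) of $X/\F_q$ all accounted for. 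Since the analogous statement for closed oriented $4$-manifolds \emph{fails} in general, and Poonen-Stoll showed the analogous statement in the Cassels-Tate setting also fails, the arithmetic vanishing must ultimately rest on a feature special to smooth projective surfaces over $\F_q$ with $p>2$ — presumably a cohomological form of ``spinness'' available in this setting but absent in the topological and abelian-variety counterparts.
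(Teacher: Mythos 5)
Your overall architecture (reduce to a cohomological pairing, express the diagonal in terms of an \'{e}tale Steenrod operation, evaluate via a Wu-type formula and characteristic classes) is the same as the paper's. But there is a genuine gap at the very first step: the claim that ``by divisibility it further suffices to handle $x \in \Br(X)[2]$'' is false. Since $q(x) = \langle x,x\rangle_{\mrm{AT}}$ is a homomorphism $G \to \tfrac{1}{2}\Z/\Z$ (where $G = \Br(X)_{\nd}[2^\infty]$), it does kill $2G$, but that means it factors through $G/2G$, not that it is determined by its restriction to $G[2]$. Concretely, take $G = \Z/4 \times \Z/4$ with the nondegenerate skew-symmetric pairing given on generators $e_1,e_2$ by $\langle e_1,e_1\rangle = \tfrac12$, $\langle e_1,e_2\rangle = \tfrac14$, $\langle e_2,e_1\rangle = \tfrac34$, $\langle e_2,e_2\rangle = 0$: then $q$ vanishes identically on $G[2]$ yet $q(e_1) = \tfrac12$. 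This is exactly the gap between ``alternating on the $2$-torsion'' and ``alternating'': the former only yields (via the group-theoretic result of Poonen--Stoll) that $\#G$ is a perfect square, which was already known by Urabe and by Liu--Lorenzini--Raynaud. The paper's first version proved precisely the $\Br(X)_{\nd}[2]$ statement; the published argument has to work with $\Z/2^n\Z(d)$-coefficients for \emph{every} $n$, which forces the introduction of generalized Steenrod squares $\wt{\Sq}^{2d}$ on $H^*(-;\Z/2^n\Z(*))$ and a higher-Bockstein/Bockstein-spectral-sequence argument to prove the key identity $\int x \smile \beta x = \wt{\Sq}^{2d}(\beta x)$, none of which is visible in your sketch.

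Two smaller points. First, even at the mod-$2$ level the identity relating $\int \wt{x} \smile \beta\wt{x}$ to a Steenrod square applied to $\beta\wt{x}$ is not a formality about ``quadratic refinements''; in the paper it is extracted from a differential in the Bockstein spectral sequence, which is then shown to vanish using torsion-freeness of $H^{4d+1}_{\et}(X;\Z_2(2d))$. Second, your proposed mechanism for the final vanishing (that the Wu class lies in the image of $\Pic(X)/2$) is close to but not quite the actual one: what is needed is that $v_{2d}$ lifts to $H^{2d}_{\et}(X;\Z_2(d))$, which follows from expressing the \'{e}tale Stiefel--Whitney classes in terms of Chern classes and the arithmetic class $\alpha \in H^1_{\et}(\Spec \F_q;\mu_2)$ (nonzero when $q \not\equiv 1 \bmod 4$), both of which lift integrally. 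Your closing intuition that the vanishing rests on a ``cohomological spinness'' special to this setting is correct in spirit --- it matches the paper's criterion for the linking form --- but the argument as outlined would at best reprove the square-order theorem, not the conjecture.
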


Tate's motivation for making Conjecture \ref{conj: Tate} was Cassels' result \cite{Ca65} that the analogous Cassels-Tate pairing is alternating for elliptic curves, which Tate had generalized in \cite{Tate62} to abelian varieties with principal polarization ``arising from a rational divisor''. Moreover, one can find at the end of \cite[\S 1]{Tate66} the claim that the Cassels-Tate pairing is alternating for \emph{all} Jacobians (with respect to their canonical principal polarizations). But this is rather ironic in hindsight, as Poonen and Stoll eventually demonstrated in \cite{PS99} that the Cassels-Tate pairing actually need \emph{not} be alternating in general for abelian varieties with principal polarization not satisfying the technical condition of ``arising from a rational divisor''; in particular, it need not be alternating for Jacobians. See the introduction and \S 1.2 of \cite{Sher} for a detailed explanation of these technical subtleties and amusing history.
	
The history of Conjecture \ref{conj: Tate} is perhaps even more tortuous than that of the analogous question for the Cassels-Tate pairing. Recall that any finite abelian group with a nondegenerate alternating pairing has order equal to a perfect square\footnote{This should be thought of as analogous to the fact that a vector space with a nondegenerate alternating pairing has even dimension.} \cite[\S 6]{PS99}, so Conjecture \ref{conj: Tate} implies that $\Br(X)_{\nd}[2^{\infty}]$ has square order. Manin computed examples (\cite{Manin67}, \cite{Manin86}) in which $\# \Br(X)_{\nd}[2^{\infty}]$ was purportedly $\Z/2\Z$, seemingly \emph{disproving} Conjecture \ref{conj: Tate}. However, in 1996 Urabe found mistakes in Manin's calculations that invalidated the counterexamples (see the introduction to \cite{Urabe96}), and then proved that in characteristic $p \neq 2$, $\Br(X)_{\nd}[2^{\infty}]$ \emph{always does} have square order! 


There has been some other partial progress on Conjecture \ref{conj: Tate} besides Urabe's theorem. We note in particular the following two results. 
 \begin{itemize}
 \item Zarhin showed in \cite{Zar} that if $X$ lifts to characteristic $0$ and the N\'{e}ron-Severi group of $X \times_{\F_q} \ol{\F}_q$ has vanishing 2-primary part, then $\langle  \cdot, \cdot \rangle_{\mrm{AT}}$ is alternating for $X$.\footnote{We thank Yuri Zarhin for informing us about \cite{Zar}, and for translating the statement of its main theorem into English. Strictly speaking, the result is for a pairing discovered independently in \cite{Zar}, which should be the same as Artin-Tate's, but no comparison with the Artin-Tate pairing is made in \cite{Zar}.}
 \item Liu-Lorenzini-Raynaud \cite{LLR05} proved by that if $\Br(X)$ is finite, implying that $\Br(X) = \Br(X)_{\nd}$, then $\# \Br(X)$ is a perfect square. (We emphasize that this applies even to the $p$-primary part!)\footnote{Technically, the argument of \cite{LLR05} uses an incorrect formula for $\# \Br(X)$, with the error stemming entirely from the false \cite[Lemma 4.2]{Go79}. This formula is corrected in \cite{Geiss}. The correction is by a factor which is a perfect square, and so preserves the conclusion that $\# \Br(X)[\ell^{\infty}]$ is a perfect square. Liu-Lorenzini-Raynaud have prepared a corrigendum \cite{LLRCorr} that corrects or completes several more arguments in \cite{Go79}, and amends the statements of \cite{LLR05} correspondingly.} Amusingly, the argument of \cite{LLR05} has nothing to do with the Artin-Tate pairing, but actually uses the work of Poonen-Stoll quantifying the failure of $\# \Sha$ to be a perfect square. 
 
 \end{itemize}
 	
\subsection{Results} In this paper we answer Tate's question in the \emph{affirmative}, finally bringing closure to this eventful drama.

\begin{thm}\label{thm: proof of conj} Conjecture \ref{conj: Tate} is true. 
\end{thm}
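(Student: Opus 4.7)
The plan is to transport the classical topological argument for $4$-manifolds into \'etale cohomology. On a smooth oriented closed $4$-manifold $M$, Wu's formula identifies the mod-$2$ self-intersection $x \cup x = \Sq^2(x)$ on $H^2(M;\F_2)$ with $v_2(M) \cup x$, where $v_2(M)$ is a specific characteristic class of $TM$. The analogous assertion I need is that the corresponding ``\'etale Wu class'', when cupped against a (lift of a) Brauer class, has trivial trace to $\Q/\Z$.

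First, I would reformulate the $2$-primary Artin-Tate pairing cohomologically. The Kummer sequence presents $\Br(X)[2^n]$ as a quotient of $H^2(X_{\et},\mu_{2^n})$, so every $x \in \Br(X)_{\nd}[2^n]$ admits a lift $\widetilde{x} \in H^2(X_{\et},\mu_{2^n})$. I would identify $\langle x,x\rangle_{\mrm{AT}}$, after applying the trace $H^4(X_{\et},\mu_{2^{n+1}}^{\otimes 2}) \to \Q/\Z$, with an \'etale Pontryagin-square-type operation $\mf{P}(\widetilde{x})$ refining the cup product. Second, I would develop a theory of \'etale Steenrod operations acting on $\mu_{2^n}$-coefficient cohomology, and prove an \'etale Wu formula expressing $\mf{P}$ (or $\Sq^2$ at the prime $2$) on $H^2(X_{\et},\F_2)$ as cup product with a class $v \in H^2(X_{\et},\F_2)$ constructed from the cotangent bundle $\Omega^1_X$ (up to a Frobenius twist, which is harmless since $p > 2$).

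Third, I would show that $v \cup \widetilde{x}$ has trivial trace in $\Q/\Z$ whenever $\widetilde{x}$ lifts a Brauer class. The input is that $v$ should lie in the image of the cycle class map $\Pic(X)/2 \to H^2(X_{\et},\F_2)$, while Brauer classes are orthogonal to the Picard group under the pairing into $H^4(X_{\et},\G_m) \cong \Q/\Z$; this orthogonality reflects the fact that cupping a Brauer class against a line bundle can be rewritten via a trace along a divisor, landing in $H^3$ of a curve, where the Brauer contribution dies.

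The main obstacle is the second step: constructing \'etale Steenrod operations on torsion coefficients in positive characteristic and equipping them with all the properties one expects (Cartan formula, Adem relations, and especially a Wu/Thom formula pinning them to characteristic classes of the cotangent bundle of $X$). The analogy with smooth manifolds is conceptually clean but technically nontrivial, requiring Thom-isomorphism-type constructions on \'etale normal bundles and careful handling of Frobenius twists on coefficient sheaves. Once that machinery is in place, the vanishing in step three should follow from a direct diagram chase through the Kummer sequence.
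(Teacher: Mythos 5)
Your proposal correctly anticipates two ingredients of the actual argument --- reducing to finite-level classes in $H^2_{\et}(X;\mu_{2^n})$, and developing \'etale Steenrod operations together with a Wu-type formula tying them to characteristic classes of the tangent bundle --- but it is built on the wrong topological analogy, and this breaks the degree bookkeeping at every stage. The Artin--Tate pairing is the analogue of the \emph{linking form on a $5$-manifold}, not the intersection form on a $4$-manifold: since $X$ is a surface over $\F_q$ rather than $\ol{\F}_q$, the trace map lives on $H^5_{\et}(X;\mu_{2^n}^{\otimes 2})$, not on $H^4$, and the pairing is $\langle x,y\rangle = \int x \smile \beta y$ with a Bockstein raising degree by one. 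Your proposed identification of $\langle x,x\rangle_{\mrm{AT}}$ with a Pontryagin-square refinement of $x\smile x$, traced through $H^4$, does not typecheck --- there is no trace $H^4_{\et}(X;\mu_{2^{n+1}}^{\otimes 2}) \to \Q/\Z$ in the arithmetic setting --- and, more seriously, it never engages with the Bockstein that is constitutive of the pairing. The hardest step of the actual proof is precisely the identity $x \smile \beta(x) = \wt{\Sq}^{2d}(\beta(x))$ (Theorem \ref{thm: pairing identity}), established by a higher-Bockstein argument combined with the torsion-freeness of $H^{4d+1}_{\et}(X;\Z_2(2d))$; nothing in your outline produces a Steenrod square applied to $\beta(x) \in H^3$ rather than to $x \in H^2$, and the self-linking number does not reduce to any operation on $x$ alone.

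The final vanishing is likewise aimed at the wrong quantity and justified by an unsubstantiated mechanism. What must vanish is the trace of $v_2 \smile \overline{\beta(x)} \in H^5$, not of $v \smile \wt{x} \in H^4$; in the paper this follows because the Wu class lifts to $\Z_2$-coefficient cohomology (via the expression of the \'etale Stiefel--Whitney classes in terms of Chern classes and the arithmetic class $\alpha$), so that the relevant Bockstein of the Wu class vanishes and one can integrate by parts using Lemma \ref{lem: top boundary vanishes}. Your suggested route --- that $v$ lies in the image of $\Pic(X)/2$ and that Brauer classes are orthogonal to $\Pic$ under a pairing into $H^4(X_{\et},\G_m)$ --- is not carried out (the "trace along a divisor landing in $H^3$ of a curve" is a sketch, and $v_2$ need not come from $\Pic$ once one works with $w_1^2$ and the class $\alpha$, or in the higher-dimensional generalization where $c_2$ enters), and in any case it addresses a cup product in the wrong degree. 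The machinery you flag as the main obstacle (\'etale Steenrod operations, Thom isomorphisms, the Wu formula) is indeed built in the paper, but the two genuinely missing ideas are the conversion of the self-linking number into a Steenrod operation via higher Bocksteins, and the integral liftability of the Wu class.
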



In fact, we deduce Theorem \ref{thm: proof of conj} from a more general result that we now describe. In \cite{Jahn15} Jahn defined a generalization of the Artin-Tate pairing to higher Brauer groups. Briefly, if $X$ is a smooth projective variety of even dimension $2d$ over $\F_q$, then its \emph{higher Brauer group} is 
\[
\Br^d(X):= H_L^{2d+1}(X; \Z(d)),
\]
where $H_L$ denotes Lichtenbaum cohomology \cite[\S 2]{Jahn15}. The importance of this group lies in its relation to the (other) Tate Conjecture concerning algebraic cycles in $X$. A completely analogous construction to Artin-Tate's, which we will describe in \S \ref{subsec: Tate pairing}, gives a non-degenerate skew-symmetric pairing for $\ell \neq p$ 
\[
\langle \cdot, \cdot  \rangle_{\mrm{AT}} \co \Br^d(X)_{\nd}[\ell^{\infty}] \times \Br^d(X)_{\nd} [\ell^{\infty}] \rightarrow \Q/\Z.
\]

One wants to know if $\langle \cdot, \cdot  \rangle_{\mrm{AT}}$ is alternating; again the issue is for $\ell=2$. In particular, this would imply that $\# \Br^d(X)_{\nd}[2^{\infty}]$ is a perfect square. Jahn generalized Urabe's method to show that $\# \Br^d(X)_{\nd}[2^{\infty}]$ is indeed a perfect square if $\mrm{char} \ \F_q = p >2$ \cite[Theorem 1]{Jahn15}. We prove the stronger statement that $\langle \cdot, \cdot  \rangle_{\mrm{AT}}$ is alternating for any such $X$, which of course recovers Theorem \ref{thm: proof of conj} when $X$ is specialized to have dimension $2$.

\begin{thm}\label{thm: main}
Let $X$ be a smooth, projective, geometrically connected variety of dimension $2d$ over $\F_q$ with $\mrm{char} \ \F_q = p \neq 2$. The pairing $\langle  \cdot, \cdot \rangle_{\mrm{AT}} $ on $\Br^d(X)_{\nd}[2^{\infty}]$ is alternating. 
\end{thm}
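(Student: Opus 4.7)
The strategy is to express the self-pairing $\langle x, x\rangle_{\mrm{AT}}$ as a cohomological squaring operation in the spirit of the Steenrod squares, and then compute that operation via a Wu-type formula in terms of characteristic classes of $X$. The model is the classical topological fact that on a smooth oriented closed $(4d{+}2)$-manifold $M$, the mod-$2$ intersection pairing on $H^{2d+1}(M;\Z/2)$ satisfies $x\cup x = \Sq^{2d+1}(x)$, and that by Wu's formula this coincides with $v\cup x$ for the Wu class $v = v_{2d+1}(TM)$. If one can realize an analogous identity for the Artin-Tate pairing, with the Wu class ``algebraic'' in the sense of coming from a cycle class, then one obtains $\langle x,x\rangle_{\mrm{AT}}=0$ because the definition of the Artin-Tate pairing already forces algebraic classes to pair trivially modulo the divisible part.

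First I would use the comparison between Lichtenbaum and étale cohomology to lift a class $x\in \Br^d(X)_{\nd}[2^n]$ to a class $\wt{x}\in H^{2d+1}_{\et}(X;\mu_{2^n}^{\otimes d})$, and re-express the Artin-Tate pairing as a cup product of such lifts, composed with a Bockstein and the Poincaré-duality trace to $\Q/\Z$. Next, I would construct a refined étale Steenrod square $\Sq$ on $H^{2d+1}_{\et}(X;\mu_{2^n}^{\otimes d})$ with values in $H^{4d+2}_{\et}(X;\mu_{2^n}^{\otimes 2d})$, built from the $\mathfrak{S}_2$-equivariant cup-square construction, and show that $\langle x,x\rangle_{\mrm{AT}} = \Tr(\Sq(\wt{x}))$, verifying independence of the lift. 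Finally, I would develop an étale Wu formula: a characteristic class $v$, built out of Chern-type classes of an algebraic surrogate for the tangent bundle of $X$, such that $\Tr(\Sq(\wt{x})) = \Tr(v\cup \wt{x})$. Since $v$ is algebraic it lies in the image of the cycle class map; the lifts of non-divisible Brauer classes pair trivially with such classes by construction of the Artin-Tate pairing, yielding the desired vanishing.

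The main obstacle is the construction of the refined Steenrod operation and the corresponding Wu formula with $\Z/2^n$-coefficients for all $n$. The mod-$2$ case has the classical flavor and was already handled in version~1 of the paper, where it sufficed to treat $\Br^d(X)_{\nd}[2]$; extending to $\Br^d(X)_{\nd}[2^n]$ requires defining higher Steenrod-type operations on mod-$2^n$ étale cohomology and carefully tracking how they interact with the Bockstein used to build the Artin-Tate pairing. One must also supply an algebraic substitute for the tangent bundle and prove that its characteristic classes genuinely play the role of Wu classes in the étale setting---there is no smooth structure on $X$ to imitate the normal-bundle/Thom-space argument from differential topology, so this step is where the bulk of the technical innovation must live. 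The upshot is that Theorem~\ref{thm: main} is reduced to producing an étale analog of Wu's formula and then invoking the ``algebraicity kills pairings with Brauer'' principle.
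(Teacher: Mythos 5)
Your overall strategy---rewrite the self-pairing as a Steenrod-type operation applied to a lift of the Brauer class, prove an \'{e}tale Wu formula, and kill the resulting term using a lifting property of the characteristic class---is exactly the paper's. But the degree bookkeeping in your middle step is off in a way that changes which operation you need. A $2d$-dimensional variety over $\F_q$ has \'{e}tale cohomological dimension $4d+1$, and its Poincar\'{e} duality is that of a $(4d+1)$-manifold: the right topological model is the \emph{linking form} on an odd-dimensional manifold, not the intersection form on a $(4d+2)$-manifold. In particular $H^{4d+2}_{\et}(X;\mu_{2^n}^{\otimes 2d})=0$, so a ``refined square'' valued there is identically zero; the operation that actually appears is $\wt{\Sq}^{2d}\co H^{2d+1}_{\et}\to H^{4d+1}_{\et}$, built from the cup-$1$ product $2^{n-1}(u\smile_1 u)$ rather than from the cup square. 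Moreover the identity $\langle x,x\rangle_n=\wt{\Sq}^{2d}(\beta x)$ is not a formality to be checked along with ``independence of the lift'': in the paper the difference of the two sides is exhibited as a second-order Bockstein $\beta_2(2^{n-1}x^2)$, i.e.\ a $d_2$-differential in a mod-$2^n$ Bockstein spectral sequence, which is then shown to vanish because $H^{4d+1}_{\et}(X;\Z_2(2d))\cong\Z_2$ is torsion-free. Some argument of this kind is needed and is one of the two substantive steps.

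The closing mechanism as you state it---``algebraicity kills pairings with Brauer by construction of the Artin-Tate pairing''---is not a step in the construction of the pairing and is not quite the right principle. What is actually needed is that the Wu class $v_{2d}$ lifts to $H^{2d}_{\et}(X;\Z_2(d))$: an integrally liftable class cupped with the (torsion) integral Bockstein of $x$ lands in $H^{4d+1}_{\et}(X;\Z_2(2d))_{\tors}=0$, and this is what makes $v_{2d}\smile\beta(x)$ vanish. Algebraicity enters only as the route to integrality: the paper expresses its \'{e}tale Stiefel-Whitney classes in terms of reductions of Chern classes together with the arithmetic class $\alpha\in H^1_{\et}(\Spec\F_q;\mu_2)$, and both lift to $\Z_2$-cohomology. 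Note that the Wu class is \emph{not} in general in the image of the cycle class map (the class $\alpha$ sits in odd degree), although $v_{2d}$, being of even degree, does turn out to be a polynomial in Chern classes alone. Finally, you correctly flag the Wu formula as the hard technical point, but the obstruction is not only the absence of tubular neighborhoods (which is handled by deformation to the normal cone); it is the failure of the K\"{u}nneth formula over $\F_q$, which forces the paper to pass through relative \'{e}tale homotopy theory to prove that the pushforward $H^*_{\et}(X\times X)\to H^*_{\et}(X)\otimes H^*_{\et}(X)$ commutes with Steenrod squares.
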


In the course of proving Theorem \ref{thm: main}, we establish several results which may be of independent interest and utility, as our work involves developing algebro-geometric versions of techniques of fundamental importance in algebraic topology. Let us briefly summarize the idea, which will be elaborated upon in \S \ref{subsec: overview}. The skew-symmetry of $\langle  \cdot, \cdot \rangle_{\mrm{AT}} $ implies that the assignment $x \mapsto \langle x,x \rangle_{\mrm{AT}}$ is a \emph{homomorphism}. Tautologically, $\langle  \cdot, \cdot \rangle_{\mrm{AT}}$ is alternating if and only if this homomorphism is $0$. The strategy is to rewrite this homomorphism in terms of canonical cohomology operations called the \emph{Steenrod squares}. Motivated by classical results on the algebraic topology of manifolds, we then develop a theory of ``Stiefel-Whitney classes'' in \'{e}tale cohomology of algebraic varieties which facilitates the calculation of the relevant Steenrod squares. 	

\begin{remark}\label{rem: analogy}
Our approach is guided by an analogy between the Artin-Tate pairing and the \emph{linking form} on a closed manifold of odd dimension. (See \S \ref{subsec: analogy linking form} for an explanation of this analogy.) Our method applies equally well to the topological situation, and it gives a necessary and sufficient criterion for the linking form to be alternating (see  \S \ref{subsec: alternating linking form}), which to our knowledge does not already exist in the topology literature. 
\end{remark}




\subsection{Overview of the proof}\label{subsec: overview}

We now give a more detailed outline of our proof of Theorem \ref{thm: main}.

\subsubsection*{Step 1: Reduction to an auxiliary pairing}

 In \S \ref{sec: pairings} we explain that there is a surjection 
\[
H^{2d}_{\et}(X; \Z/2^n\Z(d)) \twoheadrightarrow \Br(X)_{\nd}[2^n],
\]
so it suffices to prove that the pulled-back pairing on $H^2_{\et}(X; \Z/2^n\Z(d))$, which we denote $\langle \cdot, \cdot \rangle_n$, is alternating for all $n$. (Here $\Z/2^n\Z(d)$ is the constant sheaf $\Z/2^n \Z$ with an order $d$ Tate twist.) One reason the pairing $\langle \cdot, \cdot \rangle_n$ is more tractable to study is that the coefficients $\Z/2^n \Z$ carry a \emph{ring structure}, unlike $\Q_2/\Z_2$. As a consequence, the cohomology groups are enhanced with the structure of cohomology operations, which we exploit in the next step. 

\subsubsection*{Step 2: Expression in terms of cohomology operations}

We would like to understand the canonical linear functional $x \mapsto \langle x, x \rangle_n$ on $H^{2d}_{\et}(X; \Z/2^n\Z(d)) $. The key observation is that it can be expressed in terms of certain cohomology operations. To convey the spirit of this, we illustrate the flavor of the cohomology operations involved. 

One is the Bockstein operation $\beta$, which is the boundary map 
\[
 \beta \co H^{2d}_{\et}(X; \Z/2^n\Z(d)) \rightarrow H^{2d+1}_{\et}(X; \Z/2^n\Z(d))
 \]
 induced by the short exact sequence of sheaves
\[
0 \rightarrow \Z/2^n\Z(d) \rightarrow \Z/2^{2n}\Z(d) \rightarrow \Z/2^{n}\Z(d) \rightarrow 0.
\]

The second operation is a Steenrod operation, which could thought of informally as a kind of derived enhancement of the squaring operation. Concretely, the operation in question can be described in the following way (but see \S \ref{sec: steenrod} for a formal definition). Let $C_{\et}^*(X)$ be ``the'' \'{e}tale cochain complex computing $H^*_{\et}(X)$, which has the cup product
\[
 C^*_{\et}(X) \otimes C^*_{\et}(X) \xrightarrow{\smile} C^*_{\et}(X)
\]
Since the cup product on $H^*_{\et}(X)$ is graded commutative, we can find a chain homotopy 
\[
\mrm{cup}_1 \co C^*_{\et}(X) \otimes C^*_{\et}(X) \rightarrow C^{*-1}_{\et}(X)
\]
such that 
\[
d\mrm{cup}_1(u \otimes v)+\mrm{cup}_1 (d(u \otimes v)) = u \smile v \mp  v \smile u. 
\]
If $u \in Z^{2d+1}_{\et}(X)$, then $2^{n-1}\mrm{cup}_1(u \otimes u) $ defines a cohomology class in $H^{4d+1}_{\et}(X)$, if the coefficients are $2^n$-torsion. For $n=1$, the map $[u] \mapsto [2^{n-1} \mrm{cup}_1(u \otimes u)]$ is the Steenrod square $\Sq^{2d}$. For larger $n$, it is a cohomology operation that we call $\wt{\Sq}^{2d}$. These generalized Steenrod squares are carefully defined and studied in \S \ref{sec: steenrod}.

The key identity that we have referred to is the following (the precise statement is Theorem \ref{thm: pairing identity}): 

\begin{thm}
For all $x\in H^{2d}_{\et}(X; \Z/2^n\Z(d))$ we have 
\[
\langle x, x\rangle_n = \wt{\Sq}^{2d} (\beta (x)).
\] 
\end{thm}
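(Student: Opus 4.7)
My plan is to prove the identity at the cochain level by unpacking both sides in a common cochain model of étale cohomology. Let $C^*_\et(X;-)$ be such a model, equipped with a strict cup product $\mu$ and a cochain-level cup-1 product $H$ satisfying the Hirsch formula $dH(a,b) + H(da,b) + (-1)^{|a|}H(a,db) = \mu(a,b) - (-1)^{|a||b|}\mu(b,a)$, as constructed in Section \ref{sec: steenrod}. For $x \in H^{2d}_\et(X;\Z/2^n(d))$, choose a cocycle representative, lift to an integral cochain $\tilde x \in C^{2d}_\et(X;\Z(d))$, and set $Z := \tfrac{1}{2^n} d\tilde x \in Z^{2d+1}_\et(X;\Z(d))$; then its mod-$2^n$ reduction $z := Z \bmod 2^n$ is a cocycle representative of $\beta(x)$.

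Next I unpack both sides. From the definition of the auxiliary pairing in Section \ref{subsec: Tate pairing}, $\langle x, x \rangle_n$ equals the trace (via the isomorphism $\Tr_{2^n}\colon H^{4d+1}_\et(X;\Z/2^n(2d)) \cong \Z/2^n$) of the class $[\tilde x \cup Z \bmod 2^n] = [x \cup \beta(x)]$. By the definition recalled in the excerpt, $\widetilde{\Sq}^{2d}(\beta x)$ is represented by $2^{n-1} H(Z, Z) \bmod 2^n$. So the theorem reduces to the identity of classes $[\tilde x \cup Z] = [2^{n-1} H(Z, Z)]$ in $H^{4d+1}_\et(X;\Z/2^n(2d))$. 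By the Leibniz rule, $d(\tilde x \cup Z) = 2^n (Z \cup Z)$ integrally, and by the Hirsch formula with $a=b=Z$ (a cocycle of odd degree $2d+1$), $dH(Z,Z) = 2(Z \cup Z)$, so $d(2^{n-1} H(Z,Z)) = 2^n(Z \cup Z)$. Hence $\tilde x \cup Z - 2^{n-1} H(Z,Z)$ is an integral cocycle in $C^{4d+1}_\et(X;\Z(2d))$.

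The main obstacle is the final step: showing that this integral cocycle reduces to a coboundary modulo $2^n$. Producing the required primitive genuinely goes beyond the cup product and the cup-1 product alone. I expect to construct it by combining additional Hirsch-type identities --- for instance $\tilde x \cup Z - Z \cup \tilde x = 2^n H(Z,Z) - dH(Z,\tilde x)$, which follows from the Hirsch formula applied to the pair $(Z,\tilde x)$ --- with higher cup-$i$ operations (in particular a cup-$2$ correction witnessing the lack of strict symmetry of $H$), together with careful bookkeeping of $2$-adic denominators. This is the technical heart of the argument, and is where the full étale Steenrod structure developed in Section \ref{sec: steenrod} plays an essential role. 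Once the primitive is exhibited, tracing recovers the identity $\langle x, x \rangle_n = \widetilde{\Sq}^{2d}(\beta x)$ in $\Q/\Z$.
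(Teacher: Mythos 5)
Your setup and the reduction are sound, and they coincide with how the paper begins: both sides are represented by $(\tilde x \smile Z) \bmod 2^n$ and $2^{n-1}(Z \smile_1 Z) \bmod 2^n$, and their difference is an integral cocycle. The gap is the step you defer to the end. Exhibiting a primitive for that cocycle mod $2^n$ by ``additional Hirsch-type identities'' and higher cup-$i$ products cannot work, because the identity $x \smile \beta(x) = \wt{\Sq}^{2d}(\beta(x))$ is not universal --- it fails for general spaces, so no universal cochain-level bookkeeping can produce the primitive. Concretely, for $n=1$ the claim reads $x \smile \Sq^1 x = \Sq^{2d}\Sq^1 x$; on $K(\Z/2\Z, 2d)$ with $x$ the fundamental class, the left side is a decomposable element of $H^{4d+1}$ while $\Sq^{2d}\Sq^1 x$ is an admissible, indecomposable polynomial generator (the sequence $(2d,1)$ is admissible of excess $2d-1$), so the two sides are linearly independent. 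Your Hirsch identity $\tilde x \smile Z - Z \smile \tilde x = 2^{n} H(Z,Z) - dH(Z,\tilde x)$ produces $2^{n}H(Z,Z)$, not the needed $2^{n-1}H(Z,Z)$, and that factor of $2$ is exactly the obstruction that no rearrangement removes.

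What is missing is a global input: Poincar\'e duality for $X$. The paper's route (Proposition \ref{prop: beta_2 identity}) is to observe that the explicit integral cochain $2^{n-1}\tilde x^2 + 2^{2n-1}(\tilde x \smile_1 Z)$ has coboundary $2^{2n}\bigl(\tilde x \smile Z - 2^{n-1}(Z \smile_1 Z)\bigr)$. Hence the integral cocycle you isolated represents, up to the indeterminacy $\Ima \beta$, the value of a secondary Bockstein $\beta_2(2^{n-1}x^2)$, and in particular a $2^{2n}$-torsion class in $H^{4d+1}_{\et}(X;\Z_2(2d))$. Since that group is isomorphic to $\Z_2$ by Poincar\'e duality, hence torsion-free, both this class and the indeterminacy $\Ima\beta$ (which is $2^n$-torsion) vanish in top degree, and only then do the two sides agree. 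So to complete your argument you need (i) the specific primitive of $2^{2n}$ times your cocycle, and (ii) the recognition that the conclusion follows from torsion-freeness of the top-degree integral cohomology of this particular $X$, not from a cochain identity valid for all spaces.
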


 The theorem is proved in \S \ref{sec: bockstein} using cohomology operations that we call ``higher Bockstein operations'', which are those arising in the ``Bockstein spectral sequence'' for $\Z/2^n \Z(2d)$. The argument is a little elaborate: we directly calculate the difference between the two sides as a differential in the spectral sequence. The game is then to deduce indirectly that this differential must vanish, by using Poincar\'{e} duality to infer information about the behavior of the $E_{\infty}$-page.

\subsubsection*{Step 3: Relation to characteristic classes} 
The previous step reduces us the problem to that of understanding $\wt{\Sq}^{2d}$ sufficiently well. A consequence of the structure of the $\wt{\Sq}^{2d}$ discussed in \S \ref{sec: steenrod} is that we only need to calculate the effect of the classical Steenrod square $\Sq^{2d}$, which operates on cohomology with $\Z/2\Z$-coefficients. For this purpose we draw inspiration from a theorem for smooth manifolds due originally to Wu (the precise version is explained in \S \ref{sec: Wu}):

\begin{thm}[Wu]
Let $M$ be a closed smooth manifold of dimension $d$. For $x \in H^{d-i}(M; \Z/2\Z)$, we have
\[
\Sq^i x = P(w_1, w_2, \ldots) \smile x
\]
where $P$ is some explicit polynomial and the $w_j$ are the Stiefel-Whitney classes of $TM$.
\end{thm}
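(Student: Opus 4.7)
The plan is to prove this classical statement in two stages. First, produce the Wu class $v_i \in H^i(M; \Z/2\Z)$ abstractly via Poincar\'{e} duality, so that $\Sq^i x = v_i \smile x$ for all $x \in H^{d-i}(M; \Z/2\Z)$. Second, identify $v_i$ as an explicit polynomial in the Stiefel-Whitney classes of $TM$ by establishing the total Wu formula $\Sq(v) = w(TM)$ and inverting it.

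For the first stage, observe that $M$ is closed of dimension $d$ and hence $\Z/2\Z$-orientable, so Poincar\'{e} duality yields a perfect cup-product pairing $H^i(M; \Z/2\Z) \otimes H^{d-i}(M; \Z/2\Z) \to \Z/2\Z$ via evaluation on the fundamental class $[M]$. The assignment $x \mapsto \langle \Sq^i x, [M]\rangle$ is a linear functional on $H^{d-i}(M; \Z/2\Z)$, so it is represented by a unique class $v_i \in H^i(M; \Z/2\Z)$, meaning $\langle \Sq^i x, [M]\rangle = \langle v_i \smile x, [M]\rangle$ for all $x$. Since both $\Sq^i x$ and $v_i \smile x$ lie in the one-dimensional space $H^d(M; \Z/2\Z)$ (assuming $M$ connected) and are determined by their pairing with $[M]$, the identity $\Sq^i x = v_i \smile x$ holds at the level of cohomology classes.

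For the second stage, I would establish the total Wu formula $\Sq(v) = w(TM)$, where $v = 1 + v_1 + v_2 + \cdots$ and $\Sq = \sum_{j \geq 0} \Sq^j$. The classical approach goes via the Thom class: embed $M \hookrightarrow \R^N$ with normal bundle $\nu$, so that $\nu \oplus TM$ is trivial and consequently $w(\nu) \smile w(TM) = 1$. The Thom class $U \in H^N(\mathrm{Th}(\nu); \Z/2\Z)$ satisfies Thom's characterizing identity $\Sq(U) = \pi^* w(\nu) \smile U$, and Atiyah-Thom duality relates $U$ under Thom collapse to the fundamental class of $M$. Unwinding this comparison with the aid of the Cartan formula produces the desired relation $\Sq(v) = w(TM)$. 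Since $\Sq(1) = 1$, this identity is formally invertible, and recursively expresses each $v_i$ as an explicit polynomial $P(w_1, w_2, \ldots)$ of total degree $i$ in the Stiefel-Whitney classes, completing the proof.

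The main obstacle is the total Wu formula $\Sq(v) = w(TM)$, which requires the substantive input of Thom class machinery (and the Thom-characterization of Stiefel-Whitney classes) combined with Atiyah-Thom duality. The existence of $v_i$ via Poincar\'{e} duality and the subsequent formal inversion $v = \Sq^{-1}(w)$ are essentially automatic once this key relation is in hand.
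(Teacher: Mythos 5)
Your proposal is correct, but it takes a genuinely different route from the paper's. (The paper treats this statement as classical in the introduction; its actual argument appears in \S\ref{sec: Wu} as the proof of Theorem \ref{Wu theorem finite field}, which is written so as to work verbatim for smooth manifolds.) You and the paper begin identically: produce $v_i$ by representing the functional $x \mapsto \int \Sq^i x$ via Poincar\'e duality, and reduce everything to the total Wu formula $\Sq(v)=w$ followed by formal inversion. The divergence is in how $\Sq(v)=w$ is established. You embed $M$ in $\R^N$, use the Thom class $U$ of the stable normal bundle $\nu$, Thom's identity $\Sq(U)=\pi^*w(\nu)\smile U$, and the collapse map $S^N \to \mathrm{Th}(\nu)$ together with the triviality of the positive Steenrod squares on a sphere; Whitney duality $w(\nu)\smile w(TM)=1$ then converts the answer into $w(TM)$. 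The paper instead uses the \emph{diagonal} embedding $M \hookrightarrow M\times M$, whose normal bundle is $TM$ itself: it shows that $\Sq$ of the cycle class of the diagonal equals $\pr_1^*(w)$ times that class, pushes forward along $\pr_1$ to get $w$, and then evaluates the same pushforward a second time using the K\"unneth decomposition of the diagonal class in dual bases, which produces $\Sq v$ directly. Both arguments are standard and complete; the trade-off is that your route is shorter for smooth manifolds but leans on the Whitney embedding theorem and the sphere target, neither of which has an \'etale analogue, whereas the paper's diagonal route (with deformation to the normal cone replacing the tubular neighborhood) is precisely the one that transports to varieties over $\F_q$, where the failure of the K\"unneth formula becomes the new difficulty the paper must then address.
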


Our goal in this step is to establish a version of Wu's theorem for the \'{e}tale cohomology of smooth projective varieties over $\F_q$. The first task is to define an appropriate notion of Stiefel-Whitney classes, which is the subject of \S \ref{sec: SW}. Next, we establish an \'{e}tale-cohomological analogue of Wu's theorem in \S \ref{sec: Wu}. The overarching meta-strategy of the proofs is to attempt to imitate the theory as developed in algebraic topology. However there are a few possibly surprising subtleties, which result in this being the most technical part of the paper. For example, our argument employs the apparatus of \emph{relative \'{e}tale homotopy theory} developed by Harpaz-Schlank \cite{HS} and Barnea-Schlank \cite{BS}, following in the tradition of Artin-Mazur and Friedlander. Hence our Theorem \ref{thm: proof of conj} is, in our humble opinion, a rather compelling example of how this abstract theory can be used to understand very concrete questions which have no apparent grounding in homotopy theory.

\subsubsection*{Step 4: Calculation of characteristic classes}

The upshot of the preceding steps is that we can express the obstruction for $\langle  \cdot, \cdot \rangle_{\mrm{AT}}  $ to be alternating explicitly in terms of our ``\'{e}tale Stiefel-Whitney classes''. We then need to show that this obstruction actually vanishes. After some elementary manipulations, it becomes clear that the key issue is whether or not a certain explicit polynomial in Stiefel-Whitney classes, which is a cohomology class with coefficients mod $2$, lifts to an integral class. This calculation is carried out in \S \ref{sec: alternating}. Motivated by an analogous fact for complex manifolds, we prove a formula expressing our Stiefel-Whitney classes in terms of Chern classes, and conclude that they lift because Chern classes do, which establishes Theorem \ref{thm: main}.


\subsection{Comparison with earlier arguments}
The idea to use Steenrod squares on this problem goes back to Zarhin, who in \cite{Zar} studied the case when the surface admits a lift to characteristic $0$. Zarhin's argument was pushed further by Urabe in \cite{Urabe96} to show that $\# \Br(X)_{\nd}[2^{\infty}]$ is a perfect square. 

Our argument also uses Steenrod operations, although both the operations used and the manner of use are quite different. The Steenrod operation at the focus of \cite{Zar} and \cite{Urabe96} is actually just the squaring operation on an element of $H^2_{\et}(X_{\ol{\F}_q}; \Z/2\Z)$. By contrast, our argument studies a deeper connection between Tate's pairing with subtler cohomology operations, as articulated in Theorem \ref{thm: pairing identity}. 

Our strategy to ``compute'' these operations is based on setting up an analogue of Wu's theorem, as in \cite{Zar} and \cite{Urabe96}. The content of \S \ref{sec: SW}, especially the idea to lift Stiefel-Whitney classes to Chern classes, was inspired by \cite{Urabe96}. However, the proof of such an analogue is significantly more difficult, because we work with arithmetic (as opposed to geometric) \'{e}tale cohomology. For example our arithmetic version of Wu's Theorem is no easier to prove in the case when $X$ lifts to characteristic $0$, whereas Urabe's geometric version follows immediately from the Artin Comparison theorem and the classical Wu Theorem in topology.

\subsection{Acknowledgements} This project was conceived after hearing a comment of Akshay Venkatesh on the analogy between the Artin-Tate pairing and the linking form on a 5-manifold. I thank Akshay for his inspirational remark, and also for subsequent discussions on this work. 

It is a pleasure to acknowledge Soren Galatius for teaching me much of the topology which is employed here, for pointing me to several key references, and for answering my questions patiently and thoroughly. 

I also thank Levent Alpoge, Aravind Asok, Shachar Carmeli, Tom Church, Christopher Deninger, Marc Hoyois, Arpon Raksit, Arnav Tripathy, and Kirsten Wickelgren for conversations related to this paper. This document benefited enormously from comments, corrections, and suggestions by Soren Galatius, Akshay Venkatesh, and an anonymous referee. Finally, significant parts of this research were carried out while I was a guest at the Institute for Advanced Study in 2017, and supported by an NSF Graduate Fellowship.

\section{Pairings for varieties over finite fields}\label{sec: pairings}

\subsection{The Artin-Tate pairing}\label{subsec: Tate pairing}

 We briefly summarize the definition of the generalized pairing $\langle  \cdot, \cdot \rangle_{\mrm{AT}} $ from \cite[\S 2 and \S 3]{Jahn15}. Let $X$ be a geometrically connected, smooth, projective variety of even dimension $2d$ over $\F_q$. Jahn defines the \emph{higher Brauer group}
 \[
 \Br^d(X):= H_L^{2d+1}(X; \Z(d))
 \]
 where $H_L$ denotes Lichtenbaum cohomology. By \cite[Lemma 1]{Jahn15} we have the following interpretation of its non-divisible quotient for $\ell \neq p$:
 \[
 \Br^d(X)_{\nd}[\ell^{\infty}] \cong H_{\et}^{2d+1}(X; \Z_{\ell}(d))_{\tors}. 
 \]

 The pairing $\langle  \cdot, \cdot \rangle_{\mrm{AT}} $ on $ \Br^d(X)_{\nd}[\ell^{\infty}] $ is defined as follows. For any abelian group $G$, let $G_{\nd}$ denote its non-divisible quotient (i.e. the quotient by the maximal divisible subgroup). Let 
\[
\wt{\delta} \co H_{\et}^{2d}(X; \Q_{\ell}/\Z_{\ell}(d))_{\nd} \rightarrow H_{\et}^{2d+1}(X; \Z_{\ell}(d))_{\tors}
\]
be the boundary map induced by the short exact sequence 
\[
0 \rightarrow \Z_{\ell}(d) \rightarrow \Q_{\ell}(d)\rightarrow  \Q_{\ell}/\Z_{\ell}(d) \rightarrow 0.
\]
The map $\wt{\delta}$ is an isomorphism, so it suffices to define a pairing on $H_{\et}^{2d}(X; \Q_{\ell}/\Z_{\ell}(d))_{\nd}$. Now the key point is that $X$ has a Poincar\'{e} duality of dimension $4d+1$, since $X_{\ol{\F}_q}$ has a Poincar\'{e} duality of dimension $4d$ and  $\Spec\F_q$ has a Poincar\'{e} duality of dimension $1$. (This may be deduced directly from the Hochschild-Serre spectral sequence and the usual Poincar\'{e} duality for $X_{\ol{\F}_q}$.) In particular, there is a canonical isomorphism 
\[
\int \co H^{4d+1}_{\et}(X; \Q_{\ell}/\Z_{\ell}(2d))  \xrightarrow{\sim} \Q_{\ell}/\Z_{\ell}.
\]

\begin{defn}
For $x,y \in H^{2d}_{\et}(X; \Q_{\ell}/\Z_{\ell}(d))_{\nd}$, we define 
\[
\langle x, y \rangle_{\mrm{AT}} := \int (x \smile \wt{\delta} y).
\]
\end{defn}

From Poincar\'{e} duality and the fact that $\wt{\delta}$ is an isomorphism, it is evident that this pairing is non-degenerate. It is also skew-symmetric - this is proved in \cite[\S 3]{Jahn15}, and it also follows from combining Proposition \ref{prop: skew-symmetric} and Proposition \ref{prop: compatibility} below. 

\subsection{The analogy to the linking form}\label{subsec: analogy linking form}

An analogous pairing exists on any orientable manifold $M$ of odd dimension $4d+1$, and is called the linking form. Our approach was inspired by ideas of Browder used to study a variant of the linking form in \cite{Browdb}.

 Actually, to make the analogy sharper it is better to work in a slightly more general setup. We do not assume that $M$ is orientable, but we do assume that the orientation sheaf of $M$ is given by the tensor square of a $\Z_{\ell}$-local system $\Cal{L}$. Then there is a pairing on $H^{2d}(M; \Q_{\ell}/\Z_{\ell} \otimes \Cal{L})_{\nd} \cong H^{2d+1}(M; \Z_{\ell} \otimes \Cal{L})_{\tors}$ given by 
\[
\langle x, y \rangle := \int x \smile \wt{\delta} y,
\]
where $\wt{\delta}$ is the analogous boundary map to that in \S \ref{subsec: Tate pairing} and
\[
\int \co H^{4d+1}(M; \Q_{\ell}/\Z_{\ell} \otimes \Cal{L}^{\otimes 2})_{\nd} \xrightarrow{\sim} \Q_{\ell}/\Z_{\ell}
\]
is the isomorphism furnished by Poincar\'{e} duality. The same argument shows that this pairing is skew-symmetric (this is why we require the dimension to be $1 \mod{4}$). This pairing is called the \emph{linking form}\footnote{Strictly speaking, this is the $\ell$-primary part of the usual linking form.}. 

We were informed by an anonymous referee that the linking form on an orientable smooth 5-manifold should be alternating if and only if the manifold admits a spin$^{c}$-structure. We were not previously aware of this fact, nor have we have been able to locate a proof in the literature, but after hearing it we realized that our method yields a necessary and sufficient criterion for the linking form on any odd-dimensional topological manifold (with orientation sheaf of the above form) to be alternating, which recovers the aforementioned result for orientable smooth 5-manifolds. This will be explained in \S \ref{subsec: alternating linking form}. Although our paper is phrased for \'{e}tale cohomology, the reader can check that every one of the results has a corresponding statement for the singular cohomology of manifolds, which is either easier to prove or already a known theorem.

\subsection{An auxiliary pairing}

We define an auxiliary pairing on the group $H_{\et}^{2d}(X; \Z/2^n\Z(d))$. As in \S \ref{subsec: Tate pairing} there is a Poincar\'{e} duality for $H_{\et}^*(X; \Z/2^n\Z(*))$, which means in particular that there is a fundamental class inducing an isomorphism
\[
\int \co H_{\et}^{4d+1}(X; \Z/2^n\Z(2d)) \xrightarrow{\sim} \Z/2^n\Z.
\]

\begin{defn}\label{def: pairing mod 2^n} We have the short exact sequence of sheaves on $X$:
\[
0 \rightarrow \Z/2^n\Z(d) \rightarrow  \Z/2^{2n} \Z(d) \rightarrow \Z/2^n\Z(d) \rightarrow 0
\]
inducing a boundary map 
\[
\beta \colon H^i_{\et}(X; \Z/2^n\Z(d)) \rightarrow H^{i+1}_{\et}(X; \Z/2^n\Z(d)).
\] 
We define the pairing 
\[
\langle \cdot , \cdot \rangle_{n} \colon H^{2d}_{\et}(X; \Z/2^n\Z(d)) \times H^{2d}_{\et}(X; \Z/2^n\Z(d)) \rightarrow \Z/2^n\Z
\]
by 
\[
\langle x, y \rangle_n := \int x \smile \beta y .
\]
\end{defn}

\begin{prop}\label{prop: skew-symmetric}
The pairing $\langle \cdot , \cdot \rangle_n$ is skew-symmetric. 
\end{prop}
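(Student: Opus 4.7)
The approach is to derive skew-symmetry from a Leibniz-type identity for the Bockstein $\beta$, combined with the fact that $\beta$ vanishes on top-degree cohomology. Concretely, I would first establish the graded derivation rule
$$\beta(u \smile v) = \beta u \smile v + (-1)^{|u|}\, u \smile \beta v,$$
specialize to $|u|=|v|=2d$, and use graded commutativity of the cup product (noting $(-1)^{2d(2d+1)}=1$) to rewrite
$$\langle x,y\rangle_n + \langle y,x\rangle_n \;=\; \int\!\bigl(x\smile\beta y + \beta x\smile y\bigr) \;=\; \int \beta(x\smile y).$$
It then suffices to check that the right-hand side vanishes, which I would deduce from $\beta=0$ on $H^{4d}_{\et}(X;\Z/2^n\Z(2d))$.

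\textbf{Step 1: Leibniz rule for $\beta$.} The essential input is that the reduction $\Z/2^{2n}\Z \twoheadrightarrow \Z/2^n\Z$ is a ring homomorphism, so that the cup product on $\Z/2^n\Z$-valued cochains lifts to the cup product on $\Z/2^{2n}\Z$-valued cochains. Given cocycle representatives of $u,v$ and $\Z/2^{2n}\Z$-cochain lifts $\wt u,\wt v$, the differentials $d\wt u = 2^n c_u$ and $d\wt v = 2^n c_v$ have $c_u,c_v$ representing $\beta u,\beta v$ under the identification $2^n \Z/2^{2n}\Z \cong \Z/2^n\Z$. Applying the cochain-level Leibniz rule to $\wt u \smile \wt v$ and dividing by $2^n$ yields the claimed identity.

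\textbf{Step 2: Vanishing of $\beta$ in top degree.} In the long exact sequence associated to $0 \to \Z/2^n\Z(2d) \to \Z/2^{2n}\Z(2d) \to \Z/2^n\Z(2d) \to 0$, the image of $\beta\colon H^{4d}_{\et} \to H^{4d+1}_{\et}$ equals the kernel of the coefficient-change map
$$H^{4d+1}_{\et}(X;\Z/2^n\Z(2d)) \longrightarrow H^{4d+1}_{\et}(X;\Z/2^{2n}\Z(2d)).$$
Since the trace map is functorial in the coefficients, Poincar\'{e} duality identifies this with the injective multiplication-by-$2^n$ map $\Z/2^n\Z \hookrightarrow \Z/2^{2n}\Z$. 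Hence $\beta=0$ on $H^{4d}$, giving $\int\beta(x\smile y) = 0$, as required.

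The only mildly subtle step is Step 1, because the Bockstein here arises from the non-standard SES $0 \to \Z/2^n\Z \to \Z/2^{2n}\Z \to \Z/2^n\Z \to 0$ rather than the classical $0 \to \Z \to \Z \to \Z/p\Z \to 0$ used for mod-$p$ Bocksteins. The same cochain-level argument nevertheless works verbatim; the key property being used is that the middle term of the SES is a ring surjecting onto the end term. The remainder of the proof is bookkeeping.
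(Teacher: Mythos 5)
Your proposal is correct and follows essentially the same route as the paper: rewrite $\langle x,y\rangle_n + \langle y,x\rangle_n$ as $\int \beta(x\smile y)$ using the derivation property of $\beta$, then kill the result via the long exact sequence and Poincar\'{e} duality in top degree (the paper's Lemma \ref{lem: top boundary vanishes}). The only difference is that you spell out the cochain-level verification of the Leibniz rule, which the paper takes for granted.
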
 

\begin{proof}
The assertion is equivalent to 
\[
x \smile \beta y  + y \smile \beta  x  = 0.
\]
Since $\beta$ is a derivation, we have $ x \smile \beta y  + y \smile \beta  x  = \beta(x \smile y)$. Then the result follows from the next Lemma. 
\end{proof}

\begin{lemma}\label{lem: top boundary vanishes}
The boundary map $\beta  \colon H^{4d}_{\et}(X; \Z/2^n\Z(2d)) \rightarrow  H^{4d+1}_{\et}(X; \Z/2^n\Z(2d))$ is $0$. 
\end{lemma}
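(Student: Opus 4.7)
The plan is to read off the statement from the long exact sequence defining $\beta$ combined with naturality of Poincar\'{e} duality at top degree.

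First I would unpack the Bockstein in terms of the defining short exact sequence
\[
0 \to \Z/2^n\Z(2d) \xrightarrow{\cdot 2^n} \Z/2^{2n}\Z(2d) \xrightarrow{\mrm{red}} \Z/2^n\Z(2d) \to 0,
\]
whose associated long exact sequence on $X$ contains the segment
\[
H^{4d}_{\et}(X; \Z/2^n\Z(2d)) \xrightarrow{\beta} H^{4d+1}_{\et}(X; \Z/2^n\Z(2d)) \xrightarrow{\iota_*} H^{4d+1}_{\et}(X; \Z/2^{2n}\Z(2d)),
\]
where $\iota_*$ is the map induced by multiplication by $2^n$ on coefficients. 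By exactness, showing $\beta=0$ is equivalent to showing that $\iota_*$ is injective in degree $4d+1$, so I would reduce the problem to this injectivity.

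Next I would invoke Poincar\'{e} duality in top degree. The integration isomorphism $\int \co H^{4d+1}_{\et}(X; \Z/N\Z(2d)) \xrightarrow{\sim} \Z/N\Z$ is natural in the coefficient group, so one has a commuting square
\[
\begin{tikzcd}
H^{4d+1}_{\et}(X; \Z/2^n\Z(2d)) \ar[r, "\iota_*"] \ar[d, "\int", "\wr"'] & H^{4d+1}_{\et}(X; \Z/2^{2n}\Z(2d)) \ar[d, "\int", "\wr"'] \\
\Z/2^n\Z \ar[r, "\cdot 2^n"'] & \Z/2^{2n}\Z.
\end{tikzcd}
\]
Since the bottom arrow is injective (it sends $1 \mapsto 2^n$ and identifies $\Z/2^n\Z$ with the subgroup $2^n \Z/2^{2n}\Z$), so is $\iota_*$. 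Combined with the previous step this gives $\beta=0$.

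The only nontrivial point is the naturality of $\int$ with respect to the coefficient change $\Z/2^n\Z(2d) \xrightarrow{\cdot 2^n} \Z/2^{2n}\Z(2d)$, and this is the main thing I expect to need to verify carefully: since the Poincar\'{e} duality in \S\ref{subsec: Tate pairing} is built out of the Hochschild--Serre spectral sequence from duality on $X_{\ol{\F}_q}$, one checks naturality on each factor (geometric Poincar\'{e} duality and the trace on $\Spec \F_q$) and notes that a map of coefficient sheaves commutes with the fundamental class construction. This is routine but the only place where something could go wrong.
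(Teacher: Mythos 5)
Your proposal is correct and is essentially the same argument as the paper's: both reduce the vanishing of $\beta$ via the long exact sequence to the injectivity of the coefficient-change map $H^{4d+1}_{\et}(X; \Z/2^n\Z(2d)) \rightarrow H^{4d+1}_{\et}(X; \Z/2^{2n}\Z(2d))$, and then identify that map via Poincar\'{e} duality in top degree with the injection $\Z/2^n\Z \xrightarrow{\cdot 2^n} \Z/2^{2n}\Z$. Your extra remark about checking naturality of $\int$ with respect to the coefficient map is the one point the paper leaves implicit, and your verification plan for it is the right one.
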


\begin{proof}
By the obvious long exact sequence, the image is the kernel of 
\[
[2^{n}] \co H^{4d+1}_{\et}(X; \Z/2^n\Z(2d)) \rightarrow H^{4d+1}_{\et}(X; \Z/2^{2n}\Z(2d))
\]
which is identified with the inclusion $2^n \Z/2^{2n}\Z \hookrightarrow \Z/2^{2n}\Z$ by Poincar\'{e} duality. 
\end{proof}

\begin{prop}\label{prop: compatibility}
The boundary map $H^{2d}_{\et}(X; \Z/2^n\Z(d)) \rightarrow H^{2d+1}_{\et}(X; \Z_2(d))$ induced by the short exact sequence 
\[
0 \rightarrow \Z_2(d)  \xrightarrow{2^n} \Z_2(d) \rightarrow \Z/2^n\Z(d) \rightarrow 0
\]
surjects onto $H^{2d+1}_{\et}(X, \Z_2(d))[2^n]$. Moreover, it is compatible for the pairings $\langle \cdot , \cdot \rangle_n$ and $\langle \cdot , \cdot \rangle_{\mrm{AT}}$ in the sense that the following diagram commutes
\[
\xymatrix @C=0pc{
H^{2d}_{\et}(X; \Z/2^n\Z(d)) \ar@{->>}[d] & \times &  H^{2d}_{\et}(X; \Z/2^n\Z(d))  \ar@{->>}[d]  \ar[rrrrrrr]^{\langle \cdot , \cdot \rangle_n} &&&&&&& H^{4d+1}_{\et}(X; \Z/2^n\Z(d)) \ar[d]_{\sim}  \\
H^{2d+1}_{\et}(X, \Z_2(d))[2^n] & \times &H^{2d+1}_{\et}(X; \Z_2(d))[2^n]  \ar[rrrrrrr]_{\langle \cdot , \cdot \rangle_{\mrm{AT}}} &&&&&&& H^{4d+1}_{\et}(X; \Q_2/\Z_2(d)) [2^n]
	}
\]
\end{prop}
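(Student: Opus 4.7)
The plan is to handle the two assertions separately. For the surjectivity claim, I would write down the long exact sequence attached to $0 \to \Z_2(d) \xrightarrow{2^n} \Z_2(d) \to \Z/2^n\Z(d) \to 0$; its relevant portion reads
\[
H^{2d}_{\et}(X; \Z/2^n\Z(d)) \xrightarrow{\delta} H^{2d+1}_{\et}(X; \Z_2(d)) \xrightarrow{\cdot 2^n} H^{2d+1}_{\et}(X; \Z_2(d)),
\]
and exactness immediately gives $\Ima \delta = H^{2d+1}_{\et}(X; \Z_2(d))[2^n]$.

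For the compatibility of the pairings, the key device is a morphism from the short exact sequence $0 \to \Z_2(d) \xrightarrow{2^n} \Z_2(d) \to \Z/2^n\Z(d) \to 0$ to the short exact sequence $0 \to \Z_2(d) \to \Q_2(d) \to \Q_2/\Z_2(d) \to 0$, in which the left column is the identity, the middle column is ``multiplication by $1/2^n$'' (i.e.\ $1 \mapsto 1/2^n$), and the right column is the natural inclusion $\iota \co \Z/2^n\Z \hookrightarrow \Q_2/\Z_2$ identifying $\Z/2^n\Z$ with the $2^n$-torsion subgroup of $\Q_2/\Z_2$. Naturality of connecting maps then yields $\wt\delta \circ \iota = \delta$, so if $\bar y = \delta y$ for $y \in H^{2d}_{\et}(X; \Z/2^n\Z(d))$, then $\iota(y) \in H^{2d}_{\et}(X; \Q_2/\Z_2(d))_{\nd}$ is a lift of $\bar y$ under the isomorphism $\wt\delta$, and similarly for $x$.

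Using these lifts I would reduce commutativity of the diagram to the identity
\[
\iota(x) \smile \delta y \;=\; \iota(x \smile \beta y) \quad \text{in } H^{4d+1}_{\et}(X; \Q_2/\Z_2(2d)),
\]
which, after applying the fundamental-class integral, gives exactly the statement comparing the two pairings. To prove this identity I would invoke naturality of cup products with respect to the pairing of coefficient sheaves $\Z/2^n\Z \otimes \Z_2 \to \Z/2^n\Z$, which itself factors through $\Z/2^n\Z \otimes \Z/2^n\Z \to \Z/2^n\Z$, to rewrite the left-hand side as $\iota(x \smile (\delta y \bmod 2^n))$. A short cochain-level calculation, using that any $\Z_2$-cochain lift of a mod-$2^n$ cocycle representative of $y$ also serves as a $\Z/2^{2n}\Z$-lift, then identifies $\delta y \bmod 2^n$ with the Bockstein $\beta y$ coming from $0 \to \Z/2^n\Z \to \Z/2^{2n}\Z \to \Z/2^n\Z \to 0$.

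No single step looks like a serious obstacle: the argument is entirely formal once the map of short exact sequences above is in place, and the only non-trivial computation is the identification $\delta y \bmod 2^n = \beta y$, which is a standard ``two Bocksteins agree'' identity.
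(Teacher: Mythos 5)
Your proposal is correct and follows essentially the same route as the paper: both arguments reduce the compatibility to naturality of connecting homomorphisms and of cup products for the map of short exact sequences from $0 \to \Z_2(d) \xrightarrow{2^n} \Z_2(d) \to \Z/2^n\Z(d) \to 0$ to $0 \to \Z_2(d) \to \Q_2(d) \to \Q_2/\Z_2(d) \to 0$ (with $1/2^n$ in the middle and $\iota$ on the right), together with the identification of $\beta$ as the mod-$2^n$ reduction of the integral Bockstein via the analogous map to $0 \to \Z/2^n\Z(d) \to \Z/2^{2n}\Z(d) \to \Z/2^n\Z(d) \to 0$. The paper merely packages these two steps as a single ``compatible multiplications'' observation applied to a three-row diagram of sheaf sequences, whereas you traverse the same identities in the opposite order; the content is identical.
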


\begin{proof} The first claim is immediate from the long exact sequence. For the second claim, we will apply the following observation, which is an immediate consequence of naturality for the cup product: given a map of short exact sequences of sheaves
\[
\begin{tikzcd}
0 \ar[r] & A \ar[r] \ar[d, "f"] & B \ar[r] \ar[d, "g"] & C \ar[r] \ar[d, "h"] & 0 \\
0 \ar[r] & A' \ar[r] & B' \ar[r] & C' \ar[r] & 0
\end{tikzcd}
\]
and multiplications fitting into a commutative diagram 
\[
\begin{tikzcd}
A \otimes  C \ar[r] \ar[d, "f \otimes h" ]& C \ar[d, "h"]\\
A' \otimes C' \ar[r] & C'
\end{tikzcd}
\]
then for $a \in H^*(X;A), c \in  H^*(X;C)$, we have $h(a \smile c) = f(a) \smile h(c)$. 

We apply this observation to each of the maps of short exact sequences in the following commutative diagram of sheaves:
\[
\begin{tikzcd}
0 \ar[r] & \Z/2^n\Z(d) \ar[r, "2^n"] & \Z/2^{2n} \Z(d) \ar[r] & \Z/2^n\Z(d) \ar[r] & 0 \\
0 \ar[r] & \Z_2 (d)\ar[u] \ar[r, "2^n"] \ar[d, equals] & \Z_2 (d)\ar[u]  \ar[r] \ar[d] & \Z/2^n\Z(d)  \ar[u, equals] \ar[r] \ar[d, "\frac{1}{2^n}"]& 0 \\
0 \ar[r] & \Z_2(d)\ar[r] & \Q_2 (d) \ar[r] & \Q_2/\Z_2 (d)\ar[r] & 0
\end{tikzcd}
\]
Denote by $\wt{\beta}$ the boundary map in cohomology corresponding to the middle horizontal sequence, recalling that $\beta$ and $\wt{\delta}$ denote the boundary maps for the top and bottom horizontal sequences, respectively. The observation applied to the upper map of sequences shows that for $x, y \in H^*_{\et}(X; \Z/2^n\Z(d))$ we have 
\[
x \smile \beta (y) = x \smile \wt{\beta}(y).
\]
The observation applied to the lower map of sequences shows that 
\[
x \smile  \wt{\beta}(y) \mapsto [\frac{1}{2^n}]_*(x) \smile \wt{\delta}(y). 
\]
Combining these equations yields the desired conclusion. 
\end{proof}

Proposition \ref{prop: compatibility} immediately implies: 

\begin{cor}\label{cor: alternating_criterion}
If the pairing $\langle \cdot , \cdot \rangle_n$ on $H_{\et}^{2d}(X; \Z/2^n\Z(d))$ is alternating then so is the pairing $\langle \cdot, \cdot \rangle_{\mrm{AT}}$ on $\Br^d(X)_{\nd}[2^n]$. 
\end{cor}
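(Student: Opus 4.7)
The plan is to deduce the corollary directly from the commutative diagram in Proposition \ref{prop: compatibility}, combined with the identification $\Br^d(X)_{\nd}[2^n] \cong H^{2d+1}_{\et}(X; \Z_2(d))[2^n]$ obtained by restricting Jahn's isomorphism $\Br^d(X)_{\nd}[\ell^{\infty}] \cong H^{2d+1}_{\et}(X; \Z_{\ell}(d))_{\tors}$ (recalled in \S \ref{subsec: Tate pairing}) to $2^n$-torsion. Under this identification, the bottom row of the diagram becomes the Artin-Tate pairing on $\Br^d(X)_{\nd}[2^n]$, while the left vertical map becomes a surjection $H^{2d}_{\et}(X; \Z/2^n\Z(d)) \twoheadrightarrow \Br^d(X)_{\nd}[2^n]$.

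Granted this setup, the argument runs as follows. Take any $x \in \Br^d(X)_{\nd}[2^n]$ and, invoking the surjectivity statement in Proposition \ref{prop: compatibility}, lift it to some $\tilde{x} \in H^{2d}_{\et}(X; \Z/2^n\Z(d))$. Applying the commutativity of the diagram to the diagonal pair $(\tilde{x}, \tilde{x})$, the image of $\langle \tilde{x}, \tilde{x}\rangle_n$ under the right-hand vertical isomorphism $H^{4d+1}_{\et}(X; \Z/2^n\Z(d)) \xrightarrow{\sim} H^{4d+1}_{\et}(X; \Q_2/\Z_2(d))[2^n]$ equals $\langle x, x\rangle_{\mrm{AT}}$. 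Assuming $\langle\cdot, \cdot\rangle_n$ is alternating, the class $\langle \tilde{x}, \tilde{x}\rangle_n$ vanishes, hence $\langle x, x\rangle_{\mrm{AT}} = 0$. Since $x$ was arbitrary, this establishes that $\langle\cdot, \cdot\rangle_{\mrm{AT}}$ is alternating on $\Br^d(X)_{\nd}[2^n]$.

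There is no substantive obstacle; the statement is a purely formal consequence of the compatibility already packaged into Proposition \ref{prop: compatibility}, and the proof requires no additional input beyond unwinding the diagram along the diagonal. The purpose of the corollary in the larger architecture is to reduce Theorem \ref{thm: main} to the alternating property for the auxiliary pairing $\langle \cdot, \cdot\rangle_n$ with $\Z/2^n\Z$-coefficients, which is the object of attack in the subsequent sections via the cohomology-operation machinery outlined in Step 2 of \S \ref{subsec: overview}.
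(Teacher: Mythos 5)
Your proof is correct and is exactly the argument the paper intends: the paper states the corollary as an immediate consequence of Proposition \ref{prop: compatibility}, and your write-up simply unwinds that diagram along the diagonal using the surjectivity of the left vertical map and the identification $\Br^d(X)_{\nd}[2^n] \cong H^{2d+1}_{\et}(X; \Z_2(d))[2^n]$. Nothing is missing.
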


Hence to prove Theorem \ref{thm: main} we are reduced to proving: 

\begin{thm}\label{thm: pseudo_main}
The pairing $\langle \cdot , \cdot \rangle_n$  is alternating for all $n$. 
\end{thm}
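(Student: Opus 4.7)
Since $\langle \cdot, \cdot \rangle_n$ is skew-symmetric by Proposition~\ref{prop: skew-symmetric}, the diagonal assignment $q \colon x \mapsto \langle x, x \rangle_n$ is a homomorphism $H^{2d}_{\et}(X; \Z/2^n\Z(d)) \to \Z/2^n\Z$, and the statement that $\langle \cdot, \cdot \rangle_n$ is alternating is equivalent to $q \equiv 0$. The plan is to express $q$ in terms of canonical cohomology operations, then compute those operations via characteristic classes of the tangent bundle.

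\textbf{Step 1 (operation identity).} I would first introduce a generalized Steenrod-type operation
\[
\wt{\Sq}^{2d} \colon H^{2d+1}_{\et}(X; \Z/2^n\Z(d)) \longrightarrow H^{4d+1}_{\et}(X; \Z/2^n\Z(2d)) \cong \Z/2^n\Z,
\]
given at the cochain level by $[y] \mapsto [2^{n-1} H(y,y)]$, where $H$ is a cochain null-homotopy for the graded-commutator of the cup product. For $n=1$ this is the classical $\Sq^{2d}$; for general $n$ the prefactor $2^{n-1}$ forces the image to land in $2^{n-1}\Z/2^n\Z \cong \Z/2\Z$, so $\wt{\Sq}^{2d}$ factors as reduction modulo $2$ followed by the classical $\Sq^{2d}$. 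The first main identity to establish is
\[
\langle x, x \rangle_n = \wt{\Sq}^{2d}(\beta x) \qquad \text{for all } x \in H^{2d}_{\et}(X; \Z/2^n\Z(d)).
\]
I would prove this by realizing both sides as differentials in a Bockstein-type spectral sequence associated to $\Z/2^n\Z(2d)$, then using Poincaré duality on the $E_\infty$-page to constrain the only potentially discrepant differential to vanish.

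\textbf{Step 2 (étale Wu formula).} Given the factorization of $\wt{\Sq}^{2d}$, computing $q$ reduces to computing $\Sq^{2d}$ on the mod-$2$ reduction of $\beta x$. I would then prove an étale-cohomological analogue of Wu's formula:
\[
\Sq^{2d}(y) = v_{2d}(TX) \smile y \qquad \text{for } y \in H^{2d+1}_{\et}(X; \Z/2\Z),
\]
with $v_{2d}(TX) \in H^{2d}_{\et}(X; \Z/2\Z)$ an explicit polynomial in étale Stiefel--Whitney classes $w_j(TX)$ of the tangent bundle. This requires first defining étale Stiefel--Whitney classes via a classifying-space formalism for orthogonal bundles in the étale homotopy category---relying on the relative étale homotopy theory of Harpaz--Schlank and Barnea--Schlank---and then transporting Wu's classical Thom-isomorphism argument into the étale setting. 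I expect this to be the main technical obstacle of the proof, as one must verify that enough of the classical homotopical apparatus (Thom isomorphisms, Steenrod axioms, naturality under classifying maps) survives in étale homotopy theory to legitimize the construction and proof.

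\textbf{Step 3 (Chern-class lift).} Combining Steps~1--2, the obstruction becomes, after the mod-$2$ reduction forced by Step~1, a pairing of the form $\int v_{2d}(TX) \smile \beta x$ (with both factors interpreted mod $2$), so the vanishing of $q$ reduces to showing that $v_{2d}(TX)$ admits an integral lift. Indeed, if $v_{2d}(TX)$ is the mod-$2$ reduction of some $c \in H^{2d}_{\et}(X; \Z_2(d))$, then $\beta c = 0$, and the derivation property of the Bockstein together with Lemma~\ref{lem: top boundary vanishes} give $\int v_{2d}(TX) \smile \beta x = \int \beta(c \smile x) = 0$. To produce the lift, I would establish an étale analogue of the complex-geometric identity $w_{2j} \equiv c_j \pmod{2}$, expressing each even Stiefel--Whitney class of $TX$ as the mod-$2$ reduction of a Chern class of $TX$. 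Since Chern classes are defined integrally in $H^*_{\et}(X; \Z_2(*))$, the polynomial $v_{2d}(TX)$ automatically lifts, forcing $q \equiv 0$ and proving Theorem~\ref{thm: pseudo_main}.
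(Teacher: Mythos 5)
Your outline reproduces the paper's architecture exactly: Step~1 is Theorem~\ref{thm: pairing identity} (proved via higher Bockstein operations and Poincar\'{e} duality in top degree), Step~2 is the \'{e}tale Wu theorem (Theorem~\ref{Wu theorem finite field}), and Step~3 is the final computation of \S\ref{subsec: proof main theorem} resting on Theorem~\ref{thm: chern classes} and Corollary~\ref{cor: wu class lifts}; even the factorization $\wt{\Sq}^{2d} = [2^{n-1}]\circ \Sq^{2d}\circ \red_2$ and the concluding manipulation with the derivation property of $\beta$ and Lemma~\ref{lem: top boundary vanishes} match. So this is the same proof in outline, and at this level of detail it is correct.

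Two points in your proposed implementation would, however, need repair. First, you propose to define \'{e}tale Stiefel--Whitney classes ``via a classifying-space formalism for orthogonal bundles.'' The tangent bundle of a variety over $\F_q$ carries no natural orthogonal (quadratic) structure, so there is no classifying map to feed into that formalism; and the characteristic classes of quadratic bundles are in general unrelated to the classes needed here. The paper instead uses Thom's definition, $w_j(E) = \phi^{-1}(\Sq^j s_{X/E})$ with $\phi$ the Gysin isomorphism onto cohomology with supports in the zero section (\S\ref{subsec: SW construction}). A side effect of losing the classifying space is that the identity ``$\Sq^i w_j$ is a polynomial in the $w_k$'' is no longer free; it must be proved by hand (Lemma~\ref{lem: squares of SW classes}) before one can even express $v_{2d}$ as a polynomial in the $w_k$ (Lemma~\ref{lem: wu in terms of SW}). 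Second, the identity $w_{2j} \equiv c_j \pmod 2$ with vanishing odd classes is false over $\F_q$ unless $q \equiv 1 \pmod 4$: the discrepancy between the Bocksteins for $\Z/2\Z \to \Z/4\Z \to \Z/2\Z$ and $\mu_2 \to \mu_4 \to \mu_2$ produces a class $\alpha \in H^1_{\et}(X;\Z/2\Z)$, and the correct formula (Theorem~\ref{thm: chern classes}) is $w(E) = (1+\alpha)\ol{c}_{\even} + \ol{c}_{\odd}$ or $\ol{c}_{\even} + (1+\alpha)\ol{c}_{\odd}$ according to the parity of $\rank E$; in particular odd Stiefel--Whitney classes are generally nonzero. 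Your Step~3 conclusion survives because $\alpha$ also lifts to $H^1_{\et}(X;\Z_2(1))$ (Remark~\ref{rem: alpha}), but the lifting argument must be run for polynomials in $\alpha$ and the $\ol{c}_i$, not for the $\ol{c}_i$ alone.
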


The proof of Theorem \ref{thm: pseudo_main} will be the focus of the rest of the paper.

\section{Steenrod squares}\label{sec: steenrod}

In this section we define the (generalized) Steenrod squares in  \'{e}tale cohomology and establish the key facts about them. The perspective we adopt here is that we can define our cohomology operations on topological spaces, and then transport them to \'{e}tale cohomology via \'{e}tale homotopy theory. 

Let us emphasize that our construction is certainly not original to this paper. (We do introduce some generalized operations $\wt{\Sq}^i$ that we have not seen defined elsewhere, but they are minor variants of the Steenrod squares.) The earliest construction of Steenrod squares which was general enough to apply to \'{e}tale cohomology occurs in work of Epstein \cite{Eps66}. Our definition is perhaps closer to (a special case of) Jardine's construction in \cite{Jar89}.

\subsection{The Steenrod algebra in topology}

We begin with a motivational pitch about Steenrod operations in algebraic topology. An old and fundamental observation in algebraic topology is that the singular cohomology of any space with $\Z/2\Z$ coefficients carries a natural module structure over a ring called the \emph{Steenrod algebra}, and that it is fruitful to understand this additional structure. The Steenrod algebra may be characterized abstractly as the algebra of stable cohomology operations on $H^*(-; \Z/2\Z)$, i.e. all natural transformations $H^j(-; \Z/2\Z) \rightarrow H^k(-; \Z/2\Z)$ commuting with the suspension isomorphisms. 

More concretely, one can exhibit a set of cohomology operations $\Sq^i$ which generate the Steenrod algebra and which admit an explicit description in terms of homotopies defined on the cochain complex of a topological space, whose existence has to do with the failure of the cup product to be commutative at the level of cochains. This will be explained in \S \ref{subsec: cup-i}.

A key point in this paper is that we can and should ask about the analogous structure for $H^*(-; \Z/2^n\Z)$ for every $n$. In particular, we need analogues of the $\Sq^i$ for $\Z/2^n \Z$-coefficients. This leads to a construction of operations that we call $\wt{\Sq}^i$. These turn out to all be induced by the $\Sq^i$, so they are not fundamentally new operations. However, they do come up very directly in our calculations, so it will be useful to spell them out explicitly.

\subsection{\'{E}tale homotopy theory}\label{subsec: etale homotopy}

Using \'{e}tale homotopy theory, we will be able to transport our definition of (singular) cohomology operations on topological spaces to \'{e}tale cohomology of algebraic varieties. Here we just summarize the facts that we need.

To any locally noetherian scheme $X$ there is attached a pro-object in simplicial sets which is called its \emph{\'{e}tale topological type}\footnote{For the construction of $\Et(X)$ one also makes a choice of sufficiently many separably closed fields so that every residue field of a point of $X$ is contained in one of them, but we can ignore this technicality.}, and which we denote $\Et(X)$.  We refer to \cite[Definition 4.4]{Fr82} for the definition of $\Et(X)$. Given the awkwardness of the expression ``pro-(simplicial set)'', we will henceforth use the phrase ``pro-space'' to denote a pro-object in simplicial sets (however, it will be important at certain points that our ``spaces'' are really simplicial sets).

\begin{defn}We define the category of \emph{local coefficient systems} on a pro-space $\{T^i \co i \in I\}$ as follows. 
\begin{itemize}
\item An object is a local coefficient system on some $T^j$.
\item A map between local coefficient systems, defined by $\Cal{L}_1$ on $T^i$ and $\Cal{L}_2$ on $T^j$, is a map between the pullbacks of $\Cal{L}_1$ and $\Cal{L}_2$ to $T^k$ for some $k>i,j$. 
\end{itemize}
\end{defn}

\begin{remark}
In Friedlander's original definition \cite[\S 5, p. 48]{Fr82}, a ``local coefficient system'' is an isomorphism class of objects in our definition.
\end{remark}

\begin{prop}[{\cite[Corollary 5.8]{Fr82}}]\label{prop: bijection local systems} There is an equivalence of categories between locally constant sheaves on the \'{e}tale site of $X$, and local coefficient systems on the pro-space $\Et(X)$. 
\end{prop}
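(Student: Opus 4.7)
The plan is to reduce both sides to a common description in terms of representations of the (pro-)fundamental groupoid. On the algebraic side, a locally constant sheaf on $X_{\et}$ is equivalent to a continuous representation of the \'etale fundamental (pro-)groupoid of $X$: each such sheaf determines, at every geometric point $\bar{x}$ of $X$, a set with a continuous action of $\pi_1^{\et}(X, \bar{x})$, functorially in change of basepoint. This is the standard SGA1 dictionary between finite locally constant sheaves and continuous $\pi_1^{\et}$-sets; passing to filtered colimits extends it to all locally constant sheaves.

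On the topological side, for an honest simplicial set $T$ the category of local coefficient systems is equivalent to the category of functors from the fundamental groupoid $\Pi_1(T)$ to sets. For a pro-space $\{T^i : i \in I\}$, unwinding the definition of morphisms given just before the proposition (a morphism is a map between pullbacks to some later level) shows that local coefficient systems on $\{T^i\}$ are exactly representations of the pro-groupoid $\{\Pi_1(T^i)\}$, i.e.\ representations that factor through $\Pi_1(T^j)$ at some finite level, modulo identifying morphisms that become equal after pulling back to a later level.

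The bridge between the two sides is the fundamental-group calculation of Artin--Mazur and Friedlander, which identifies the pro-groupoid $\{\Pi_1(T^i)\}$ with the \'etale fundamental pro-groupoid of $X$. This is essentially built into the construction of $\Et(X)$ via (rigid) hypercovers, since each hypercover $U_\bu \to X$ that refines a finite \'etale cover produces a simplicial set whose $0$- and $1$-simplices record exactly the specialization data of geometric points that determines a locally constant sheaf. Combining the three identifications yields the claimed equivalence of categories.

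The main technical obstacle is a cofinality argument: one must verify that every locally constant \'etale sheaf on $X$ is pulled back from a local coefficient system on some finite level $T^j$, and that two morphisms which agree on the algebraic side already agree on some level $T^k$. Both assertions reduce to the fact that the hypercovers indexing $\Et(X)$ are cofinal among finite \'etale covers of $X$, so that every finite locally constant sheaf is trivialized, and every morphism between such is determined, at a finite stage of the pro-system. With this cofinality in hand the equivalence is essentially formal.
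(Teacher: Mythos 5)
The paper does not actually prove this proposition: it is quoted from Friedlander (\cite{Fr82}, Corollary 5.8), with the cosmetic upgrade --- noted in the Remark immediately following --- from isomorphism classes to an honest equivalence of categories. Your sketch follows the same route as Friedlander's own argument (identify both sides with representations of the pro-fundamental groupoid, invoke the comparison $\pi_1(\Et(X)) \cong \pi_1^{\et}(X)$, and use cofinality of the rigid hypercoverings to see that every object and every morphism is realized at a finite level), so it is consistent with the cited proof. The one clause to treat with care is ``passing to filtered colimits extends it to all locally constant sheaves'': the SGA1 dictionary is clean only for locally constant sheaves with finite stalks, which is the generality in which the cited result is used throughout the paper; sheaves trivialized only by non-finite \'{e}tale surjections would need a separate discussion.
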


\begin{defn} 
We define the \emph{cochain complex} of a pro-space $\{T^i\}$ with coefficients in a local coefficient system $\Cal{F}$ to be the direct limit of the levelwise cochain complexes:
\[
C^*(\{T^i\}; \Cal{F}) := \varinjlim_i C^*(T^i; \Cal{F}).
\]
By the exactness of filtered colimits, we have 
\[
H^*(C^*(\{T^i\}; \Cal{F})) \cong \varinjlim_i H^*(C^*(\{T^i\}; \Cal{F})),
\]
so this recovers the definition of the \emph{cohomology} of a pro-space $\{T^i\}$ in \cite[Definition 5.1]{Fr82} as the direct limit of the levelwise cohomology.

 In particular, if $\Et(X)= \{U^i \co i \in I\}$ then
 \[
 C^*(\Et(X); \Cal{F}) := \varinjlim_i C^*(U^i; \Cal{F}) \quad
\text{and}  \quad  H^*(\Et(X); \Cal{F}) := \varinjlim_i H^*(U^i; \Cal{F}).
\]
\end{defn}

\begin{prop}[{\cite[Proposition 5.9]{Fr82}}] If $\Cal{F}$ is a locally constant sheaf on $X$ and $\Et(\Cal{F})$ is the corresponding local coefficient system on $\Et(X)$ under the equivalence of categories in Proposition \ref{prop: bijection local systems}, then there is a natural isomorphism
\[
H^*_{\et}(X; \Cal{F}) \cong H^*(\Et(X); \Et(\Cal{F})).
\]
\end{prop}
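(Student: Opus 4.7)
The plan is to reduce to Verdier's hypercovering description of \'{e}tale cohomology and then identify the cochain complexes combinatorially. By construction, $\Et(X)$ is the pro-object obtained by applying the connected-components functor levelwise to a cofinal system of \'{e}tale hypercoverings $U_\bu \to X$; that is, $\Et(X) = \{\pi_0(U_\bu)\}$. Under the equivalence of Proposition \ref{prop: bijection local systems}, a locally constant sheaf $\Cal{F}$ on $X$ corresponds to a local coefficient system $\Et(\Cal{F})$ defined on $\pi_0(U_\bu)$ for all sufficiently fine $U_\bu$. Verdier's hypercovering theorem provides an isomorphism
\[
H^*_{\et}(X; \Cal{F}) \cong \varinjlim_{U_\bu} H^*(\Gamma(U_\bu; \Cal{F})),
\]
where the colimit is taken over the filtered category of hypercoverings of $X$.

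The heart of the argument is an identification, for $U_\bu$ sufficiently fine that $\Cal{F}|_{U_n}$ is constant on every connected component of each $U_n$, of cosimplicial abelian groups
\[
\Gamma(U_\bu; \Cal{F}) \cong C^*(\pi_0(U_\bu); \Et(\Cal{F})).
\]
Since $\Cal{F}$ is locally constant, its sections on a connected scheme are canonically identified with the stalks defining $\Et(\Cal{F})$ at the corresponding vertex of $\pi_0(U_\bu)$; face and degeneracy maps correspond under this identification because they commute with stalks. One must then check compatibility of coboundary maps, which is a direct unwinding of definitions. The subsystem of hypercoverings that trivialize $\Cal{F}$ is cofinal in all hypercoverings, since any hypercovering can be refined by one whose terms are disjoint unions of Galois covers splitting the monodromy representation of $\Cal{F}$.

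Passing to the colimit over this cofinal subsystem and using exactness of filtered colimits (as recorded just above in the definition of cohomology of a pro-space) yields
\[
\varinjlim_{U_\bu} H^*(\Gamma(U_\bu; \Cal{F})) \cong \varinjlim_{U_\bu} H^*(C^*(\pi_0(U_\bu); \Et(\Cal{F}))) = H^*(\Et(X); \Et(\Cal{F})),
\]
and naturality in $\Cal{F}$ is automatic since every step in the chain is natural. The main obstacle is really the cochain-level identification in the middle step: although intuitively clear from the slogan ``locally constant sheaves are local systems on $\pi_0$,'' it must be set up so as to respect the simplicial structure of $U_\bu$ and to accommodate nontrivial local coefficients rather than merely the constant case. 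One mild subtlety is that the identification depends on a coherent choice of stalk identifications on connected components, which is cleanest when phrased in terms of the equivalence of categories in Proposition \ref{prop: bijection local systems} rather than by picking basepoints. Once this identification is in place, the remaining step is a purely formal manipulation of filtered colimits.
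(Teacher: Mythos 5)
Your argument is essentially the standard proof of this result, which the paper does not reprove but simply cites from Friedlander (\cite{Fr82} Proposition 5.9): reduce to the Verdier hypercovering description of \'{e}tale cohomology, identify $\Gamma(U_\bu;\Cal{F})$ with $C^*(\pi_0(U_\bu);\Et(\Cal{F}))$ on the cofinal subsystem of hypercoverings trivializing $\Cal{F}$, and pass to the filtered colimit. The only point you gloss over is that $\Et(X)$ is indexed by \emph{rigid} hypercoverings (so that it is a genuine pro-object rather than one in the homotopy category), and one must check these map cofinally into the index category appearing in the Verdier theorem; this is handled in \cite{Fr82} and does not affect the correctness of your outline.
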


\subsection{Steenrod's cup-$i$ product}\label{subsec: cup-i}
 Let $X$ be a topological space. Let $R$ be a local coefficient system in commutative rings, and $C^*(X;R)$ the singular cochain complex. Steenrod defined sequence of maps 
\begin{align*}
\mrm{cup}_i \co C^r(X;R) \otimes C^s(X;R) &\rightarrow C^{r+s-i}(X;R) \\
u \otimes v & \mapsto u \smile_i v 
\end{align*}
called the ``cup-$i$ products''. We will give a high-level exposition; a reference for this standard (in topology) material is \cite[Chapter 2]{MoTan68}.

The cup product for $X$ is induced at the level of chain complexes by the composition of the Alexander-Whitney map and the restriction to the diagonal: 
\begin{equation}\label{eq: rest mult}
C^*(X;R) \leftarrow C^*(X \times X; R) \xleftarrow{\sim} C^*(X;R) \otimes C^*(X;R).
\end{equation}
This composition is not $S_2$-equivariant because the Alexander-Whitney map is not $S_2$-equivariant; it is only $S_2$-equivariant up to homotopy. However, there is a way to rectify it to be an $S_2$-equivariant quasi-isomorphism, which we now describe. 

Let $\mrm{E}S_2$ be a contractible space with a free $S_2$-action; in fact, let us take the explicit model $\mrm{E}S_2 = S^{\infty}$. We view $C_*(\mrm{E}S_2; \ul{R})$ as a cochain complex in non-positive degrees, which provides a free resolution as $S_2$-modules of the constant sheaf $\ul{R}$ in degree $0$. Then there \emph{is} an $S_2$-equivariant quasi-isomorphism
\begin{equation}\label{eq: equivariant chain map}
C^*(X \times X; R) \simeq C^*(X;R) \otimes C^*(X;R) \otimes C_*(\mrm{E}S_2; \ul{R})
\end{equation}
where the $S_2$ action on the right side is diagonal for the ``swap'' action on $C^*(X;R) \otimes C^*(X;R)$ and the tautological action on $\mrm{E}S_2$.  Tensoring \eqref{eq: equivariant chain map} with $C^*(\mrm{E}S_2; \ul{R})$ and applying the evaluation pairing yields an $S_2$-equivariant cochain map
\begin{equation}\label{eq: equivariant pullback map}
C^*(X; R) \otimes  C^*(\mrm{E}S_2; \ul{R}) \leftarrow C^*(X; R) \otimes C^*(X; R)
\end{equation}
where the $S_2$-action is via ``swap'' on the right hand side, and the tautological action on $C^*(\mrm{E}S_2; \ul{R})$ on the left hand side.


Now we use the presentation of $S^{\infty}$ as a simplicial complex with two cells $d_i$ and $Td_i$ in every dimension which are interchanged under the $S_2$-action. In the chain complex $C_*(\mrm{E}S_2; \ul{R})$ we then have two corresponding generators $e_i \otimes 1$ and $Te_i \otimes 1 \in C_i(\mrm{E}S_2; \ul{R})$. Contracting \eqref{eq: equivariant pullback map} with $e_i \otimes 1$ gives the cup-$i$ product
\[
C^{r+s-i} (X;R) \leftarrow C^r(X; R) \otimes C^s(X; R) \co \mrm{cup}_i.
\] 
We will also use the notation 
\[
u  \smile_i v := \mrm{cup}_i(u \otimes v).
\]

 We have the coboundary formula \cite[Chapter 2, p. 16]{MoTan68}
\begin{equation}\label{eq: coboundary formula}
d (u  \smile_i v) = (-1)^i d  u \smile_i v + (-1)^{i+r} u  \smile_i d v - (-1)^i u \smile_{i-1} v - (-1)^{rs} v \smile_{i-1} u,  
\end{equation}
where $|u|=r, |v|=s$. We can rewrite \eqref{eq: coboundary formula} as:
\begin{equation}\label{eq: homotopy formula}
d(u \smile_i v) - (-1)^i  \mrm{cup}_i(d(u \otimes v)) = (-1)^{i-1} u \smile_{i-1} v - (-1)^{rs} v \smile_{i-1}u. 
\end{equation}
It is the case $i=1$ in \eqref{eq: homotopy formula} that will be most important for us. For concreteness, let us spell out the informal meaning of \eqref{eq: homotopy formula}. The cup-$0$ product is just the multiplication on cochains. The cup-$1$ product furnishes a chain homotopy between $u \smile_0 v$ and $\pm v \smile_0 u$ ``witnessing'' the graded commutativity of the cup product. The cup-$2$ product furnishes a chain homotopy between $u \smile_1 v$ and $\pm v \smile_1 u$, etc. 

\begin{remark}
In order to bootstrap the $\mrm{cup}_i$-product from simplicial sets to \'{e}tale topological type as in \S \ref{subsec: etale homotopy}, we take a model for the $\mrm{cup}_i$-product which is \emph{functorial} in maps of simplicial sets, whose existence is guaranteed by \cite[Appendix B]{Sm15}. (Although the $\mrm{cup}_i$-product in \cite{Sm15} is phrased with integral coefficients, it exists for any local coefficient system of commutative rings, by base change.) 
\end{remark}

 We now turn to the task of extracting cohomology operations out of the cup-$i$ product. The cup-$i$ product does not preserve cocycles, except in characteristic $2$, so that is the simplest case in which we get cohomology operations, and we discuss it first.

\subsection{Classical Steenrod squares}\label{subsec: classical Steenrod}

If $2=0$ in $R$, then it is easily checked from \eqref{eq: coboundary formula} that the operation 
\[
u \mapsto u \smile_i u
\]
sends cocycles to cocycles and coboundaries to coboundaries, hence descends to a cohomology operation 
\[
\Sq_i \co H^r(X; R) \rightarrow H^{2r-i}(X; R). 
\]
We then define the Steenrod square 
\[
\Sq^i := \Sq_{r-i} \co H^r(X; R) \rightarrow H^{r+i}(X; R).
\]
 For $R=\Z/2\Z$, which is the case studied in \cite{MoTan68} \S 2, this construction recovers the classical Steenrod squares.
 
 \subsubsection*{Properties of the Steenrod squares} We now recall the formal properties of these classical Steenrod squares. (Proofs can be found in \cite[\S 2,3]{MoTan68}.)

\begin{enumerate}
\item ({\sc naturality}) For any $f \colon X' \rightarrow X$, we have 
\[
f^* \Sq^i = \Sq^i f^*.
\]
\item ({\sc cartan formula}) We have
\[
\Sq^i(x \smile y) = \sum_{j=0}^i \Sq^j(x) \smile \Sq^{i-j}(y).
\]

If we define the \emph{total Steenrod operation} $\Sq := \sum_i \Sq^i$, then the Cartan formula can be neatly packaged as 
\[
\Sq(x \smile y )  = \Sq(x) \smile \Sq(y).
\]

\item ({\sc Adem relations}) If $0<i<2j$ then 
\[
\Sq^i\Sq^j = \sum_{k=0}^{[i/2]} \binom{j-1-k}{i-2k} \Sq^{i+j-k} \Sq^k.
\]
\item ({\sc special cases}) For $x \in H^j(X; \Z/2\Z)$ we have 
\begin{itemize}
\item $\Sq^0(x) = x$, 
\item $\Sq^1(x) = \beta(x)$ for $\beta$ the connecting homomorphism $H^j(X; \Z/2\Z) \xrightarrow{\beta} H^{j+1}(X;\Z/2\Z)$ induced by the short exact sequence $0 \rightarrow \Z/2\Z \rightarrow \Z/4 \Z \rightarrow \Z/2\Z \rightarrow 0$.  
\item $\Sq^j (x) = x \smile x $,
\item For $i>j$, we have $\Sq^i(x)=0$. 

\end{itemize}
\item ({\sc Stability}) The Steenrod operations commute with the suspension isomorphisms 
\[
H^i(X;\Z/2) \cong H^{i+1}(\Sigma X;\Z/2).
\] 
\end{enumerate}

\subsection{Generalized Steenrod squares}\label{subsec: generalized steenrod}

We now drop our assumption that $2=0$ in $R$. If $u$ is a cocycle, we see from \eqref{eq: coboundary formula} that 
\begin{equation}\label{eq: coboundary square}
 d(u \smile_i u) = [(-1)^i  - (-1)^{r^2}] u \smile_{i-1} u . 
\end{equation}

\emph{Suppose $2^n= 0$ in $R$.} If $r-i$ is even, then \eqref{eq: coboundary square} implies that $2^{n-1} u \smile_i u$ is a cocycle. Furthermore, one can  check that the operation $ u\mapsto 2^{n-1} u \smile_i u$ also takes coboundaries to coboundaries, and therefore descends to a cohomology operation
\[
\wt{\Sq}_i \co H^r(X; R) \rightarrow H^{2r-i}(X; R).
\]

\begin{defn}
If $i$ is even, we define 
\[
\wt{\Sq}^i := \wt{\Sq}_{r-i} \co H^r(X; R) \rightarrow H^{r+i}(X; R).
\]
\end{defn}

\begin{lemma}\label{lemma: top generalized sq}
Continue to assume that $2^n = 0 $ in $R$. Let $\red_2 \co H^*(X; R) \rightarrow H^*(X; R/2R)$ be the reduction mod $2$, and let $[2^{n-1}] \co H^*(X; R/2R) \rightarrow H^*(X; R)$ be the map induced by $R/2R \xrightarrow{2^{n-1}} R$. If $i$ is even, then we have
\[
\wt{\Sq}^i = [2^{n-1}] \circ \Sq^i \circ \red_2.
\]
\end{lemma}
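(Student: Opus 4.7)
The plan is to verify the claimed identity at the cocycle level: for any cocycle representative $u \in Z^r(X; R)$ of a class $[u] \in H^r(X; R)$, both $\wt{\Sq}^i([u])$ and $[2^{n-1}](\Sq^i(\red_2[u]))$ are represented by the common cocycle $2^{n-1}(u \smile_{r-i} u) \in Z^{r+i}(X; R)$.

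The left-hand side is immediate from \S\ref{subsec: generalized steenrod}: the hypothesis that $i$ is even ensures that $r - (r-i) = i$ is even, so $\wt{\Sq}^i = \wt{\Sq}_{r-i}$ is defined on $H^r(X; R)$ and computed by $[u] \mapsto [2^{n-1}(u \smile_{r-i} u)]$. For the right-hand side, I would trace through the three operations in turn. The cup-$i$ products of \S\ref{subsec: cup-i} are constructed via contraction of an $S_2$-equivariant chain map at the level of integral chains before any coefficient system enters, so they are natural in the coefficient ring; applying $\red_2$ therefore yields $\red_2(u) \smile_{r-i} \red_2(u) = \red_2(u \smile_{r-i} u) \in Z^{r+i}(X; R/2R)$, which represents $\Sq^i(\red_2[u])$ by the recipe of \S\ref{subsec: classical Steenrod} (since $2=0$ in $R/2R$). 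Next, the operation $[2^{n-1}]$ is induced by the additive coefficient map $R/2R \to R$ sending $[y] \mapsto 2^{n-1} y$, which is well-defined precisely because $2^n = 0$ in $R$; its composition with $\red_2 \co R \to R/2R$ is multiplication by $2^{n-1}$ on $R$. Consequently the image of the cocycle $\red_2(u \smile_{r-i} u)$ under $[2^{n-1}]$ at the cochain level is exactly $2^{n-1}(u \smile_{r-i} u)$, matching the left-hand side.

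The argument is essentially a bookkeeping exercise and I do not anticipate a substantive obstacle. The two small points to verify are the naturality of the cup-$i$ products in the coefficient ring, evident from their chain-level construction in \S\ref{subsec: cup-i}, and the well-definedness of the additive map $R/2R \to R$ underlying $[2^{n-1}]$, which is precisely the content of the standing hypothesis $2^n = 0$.
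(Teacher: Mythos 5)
Your proposal is correct and matches the paper's approach: the paper's proof is simply ``immediate upon unwinding the definitions,'' and your cochain-level verification that both sides are represented by $2^{n-1}(u \smile_{r-i} u)$ is precisely that unwinding, spelled out. The two points you flag (naturality of $\mrm{cup}_i$ in the coefficient ring and well-definedness of the map $R/2R \xrightarrow{2^{n-1}} R$) are exactly the right things to check and both hold as you argue.
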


\begin{proof}
This is immediate upon unwinding the definitions. 
\end{proof}

Next suppose that $r-i$ is odd. \emph{In this case we do not assume a priori that $2^n=0$ in $R$.} (Although we do not need the operations where $r-i$ is odd in this paper, we construct them for the sake of completeness.) From \eqref{eq: coboundary square} we see that if  $r-i$ is odd, then $u \smile_i u$ is a cocycle if $u$ is a cocycle. Similarly one checks that $u \mapsto u \smile_i u$ sends coboundaries to coboundaries, hence descends to a cohomology operation
\[
\wt{\Sq}_i \co H^r(X; R) \rightarrow H^{2r-i}(X; R).
\]

\begin{defn}
If $i$ is odd, we define 
\[
\wt{\Sq}^i := \wt{\Sq}_{r-i} \co H^r(X; R) \rightarrow H^{r+i}(X; R).
\]
\end{defn}	

Let us elucidate the relationship between the Steenrod squares constructed in the two cases. If $u$ is cocycle, then by \eqref{eq: coboundary formula} we have 
\[
u \smile_i u = (-1)^{i} 2 d(u \smile_{i+1} u).
\]
Then the analogue of Lemma \ref{lemma: top generalized sq} is:

\begin{lemma} Suppose that the complex
\[
0 \rightarrow R/2^nR \xrightarrow{2} R/2^{n+1}R \rightarrow R/2 R \rightarrow 0
\]
is short exact. Let $\beta_{2,2^n} \co H^*(X; R/2R) \rightarrow H^{*+1}(X; R/2^n R)$ be the induced boundary map, and let $\red_2$ be as in Lemma \ref{lemma: top generalized sq}. If $i$ is odd, then we have
\[
\wt{\Sq}^i   =  \beta_{2,2^n} \circ \Sq^{i-1}  \circ \red_2.
\]	
\end{lemma}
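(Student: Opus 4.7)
The plan is to verify the identity at the cochain level, parallel to the proof of the even-degree Lemma~\ref{lemma: top generalized sq}. Fix a cocycle $u \in Z^r(X;R)$ and write $\bar u = \red_2 u \in Z^r(X; R/2R)$. By the construction of \S\ref{subsec: generalized steenrod}, the class $\wt{\Sq}^i[u]$ is represented by the cocycle $u \smile_{r-i} u$. For the right-hand side, the classical Steenrod square of \S\ref{subsec: classical Steenrod} (applicable because $i-1$ is even) gives
\[
\Sq^{i-1}[\bar u] = [\bar u \smile_{r-i+1} \bar u] \in H^{r+i-1}(X; R/2R),
\]
and a canonical cochain lift to $R/2^{n+1}R$ is $u \smile_{r-i+1} u$, obtained by reducing $u$ modulo $2^{n+1}$.

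The crux is the coboundary computation. Applying the cup-$i$ coboundary formula \eqref{eq: coboundary square} at subscript $r - i + 1$, the parity relation between $r - i + 1$ and $r$ (complementary to that of $r - i$, which is what makes $u \smile_{r-i} u$ a cocycle) places us in the ``divisible by 2'' case, yielding
\[
d(u \smile_{r-i+1} u) \;=\; \pm 2 \, (u \smile_{r-i} u)
\]
in $C^{r+i}(X; R/2^{n+1}R)$. This is exactly the cochain-level recipe for the Bockstein: by the definition of $\beta_{2,2^n}$ associated to $0 \to R/2^n R \xrightarrow{2} R/2^{n+1}R \to R/2R \to 0$, we conclude
\[
\beta_{2,2^n}[\bar u \smile_{r-i+1} \bar u] \;=\; \pm[u \smile_{r-i} u \bmod 2^n] \in H^{r+i}(X; R/2^n R).
\]

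Up to a universal sign, this coincides with the image of $\wt{\Sq}^i[u]$ under the reduction $H^{r+i}(X;R) \to H^{r+i}(X; R/2^n R)$, and this reduction is the implicit codomain identification that makes sense of the equation (analogous to the role of $[2^{n-1}]$ in Lemma~\ref{lemma: top generalized sq}). The only real obstacle I anticipate is sign bookkeeping, so that the $\pm$ resolves consistently with the normalization of $\wt{\Sq}^i$ fixed in \S\ref{subsec: generalized steenrod}; no ingredient beyond the cup-$i$ coboundary formula is needed, mirroring how the even-$i$ lemma was a formal consequence of Lemma~\ref{lemma: top generalized sq}'s definitional unwinding.
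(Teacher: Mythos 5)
Your argument is correct and is exactly the definition-unwinding that the paper's one-sentence proof gestures at: since $i-1$ is even, $\Sq^{i-1}\bar{u}$ is represented by $\bar{u}\smile_{r-i+1}\bar{u}$, its lift $u\smile_{r-i+1}u$ to $R/2^{n+1}R$-cochains has coboundary $\pm 2\,(u\smile_{r-i}u)$, and dividing by $2$ is precisely the cochain recipe for $\beta_{2,2^n}$, recovering the representative $u\smile_{r-i}u$ of $\wt{\Sq}^i[u]$ (modulo the implicit reduction of the codomain that you correctly identify). The sign you leave open is genuinely ambiguous in the source as well --- \eqref{eq: coboundary square} as printed is inconsistent with \eqref{eq: coboundary formula}, and your parity argument implicitly uses the corrected version, with vanishing coefficient when $r-i$ is odd --- so this is not a gap relative to the paper's own treatment.
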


\begin{proof}
This is immediate upon unwinding the definitions. 
\end{proof}

 \subsection{Application to \'{e}tale cohomology}
 
 Let $R := \bigoplus_{j \in \Z} \Z/2^n \Z(j)$, viewed as a locally constant sheaf on $X$ (where ``$(j)$'' denotes the Tate twist) valued in rings, with multiplicative structure given by the isomorphisms
 \[
 \Z/2^n \Z(j) \otimes \Z/2^n \Z(j') \xrightarrow{\sim}  \Z/2^n \Z(j+j').
 \]
 Applying \S \ref{subsec: etale homotopy} and the construction of \S \ref{subsec: generalized steenrod}, we obtain operations 
 \[
 \wt{\Sq}^i \co H_{\et}^r(X; \Z/2^n\Z(j)) \rightarrow H_{\et}^{r+i}(X; \Z/2^n \Z(2j)).
 \]
For convenience of the reader, we summarize all the facts that we shall need about the $ \wt{\Sq}^i$ below.

\begin{example}\label{lemma: cup-1}
If $x \in H^{i+1}_{\et}(X; \Z/2^n \Z(j))$ then $\wt{\Sq}^{i}(x)$ has the following description. Let $C^*_{\et}(X;\Z/2^n \Z(j)) $ be the \'{e}tale cochain complex for $X$, defined as in \S \ref{subsec: etale homotopy}. There is a chain homotopy 
\begin{align*}
\mrm{cup}_1 \co   C^r_{\et}(X;\Z/2^n \Z(j)) \otimes C^s_{\et}(X; \Z/2^n \Z(j)) &\rightarrow C^{r+s-1}_{\et}(X; \Z/2^n \Z(2j)) \\
u \otimes v & \mapsto u \smile_1 v 
\end{align*}
such that (by the $i=1$ case of \eqref{eq: homotopy formula}) 
\[
d (\mrm{cup}_1(u \otimes v)) +\mrm{cup}_1( d (u \otimes v)) = u \smile v -  (-1)^{rs} v \smile u.
\]
Let $u \in C^{i+1}_{\et}(X; \Z/2^n \Z(j))$ be a representative for $x$. If $i$ is even, then we have
\[
\wt{\Sq}^{i}(x) = [2^{n-1}u \smile_1 u].
\]
\end{example}

\begin{lemma}\label{lemma: etale generalized sq}
Let $[2^{n-1}] \co H^*_{\et}(X; \Z/2\Z(j)) \rightarrow H^*_{\et}(X; \Z/2^n\Z(j))$ be the map induced by the inclusion of sheaves $\Z/2\Z(j) \xrightarrow{2^{n-1}} \Z/2^{n} \Z(j)$.
Let $\red_2$ be the reduction mod $2$. If $i$ is even, then 
\[
\wt{\Sq}^i = [2^{n-1}] \circ \Sq^i \circ \red_2.
\] 
\end{lemma}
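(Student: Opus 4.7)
The plan is to mirror the proof of the topological analogue Lemma \ref{lemma: top generalized sq}, which was essentially an unwinding of definitions, and use the \'etale homotopy framework of \S \ref{subsec: etale homotopy} to transport everything cleanly to the \'etale cochain complex $C^*_{\et}(X; \Z/2^n\Z(j))$. The cup-$i$ products of \S \ref{subsec: cup-i} are defined on this complex, and for a cocycle $u \in C^r_{\et}(X; \Z/2^n\Z(j))$ representing a class $x$ with $r-i$ even, the construction of \S \ref{subsec: generalized steenrod} gives $\wt{\Sq}^i(x) = [2^{n-1}\,u \smile_{r-i} u]$. The task is to identify this with $[2^{n-1}] \circ \Sq^i \circ \red_2(x)$.

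The key observation is the sheaf-level identity
\[
2^{n-1} \cdot \mu \; = \; [2^{n-1}] \circ \bar{\mu} \circ (\red_2 \otimes \red_2)
\]
as maps $\Z/2^n\Z(j) \otimes \Z/2^n\Z(j) \to \Z/2^n\Z(2j)$, where $\mu$ and $\bar{\mu}$ denote the multiplications on $\Z/2^n\Z$- and $\Z/2\Z$-coefficients respectively. Indeed, for $a,b \in \Z/2^n\Z$ one has $2^{n-1}(ab) = 2^{n-1}(\bar{a}\bar{b})$ in $\Z/2^n\Z$, since $ab - \bar{a}\bar{b} \in 2\Z/2^n\Z$ is annihilated by $2^{n-1}$. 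Next I would invoke naturality of the cup-$i$ product construction \eqref{eq: rest mult}--\eqref{eq: equivariant pullback map} in the multiplicative coefficient system: nothing in that construction references the coefficients beyond their multiplication, so the sheaf identity above promotes to a cochain-level identity
\[
2^{n-1}(u \smile_{r-i} u) \; = \; [2^{n-1}](\bar{u} \smile_{r-i} \bar{u})
\]
in $C^{r+i}_{\et}(X; \Z/2^n\Z(2j))$, where $\bar{u} := \red_2 u$. Passing to cohomology and using $[\bar{u} \smile_{r-i} \bar{u}] = \Sq^i(\red_2 x)$ from \S \ref{subsec: classical Steenrod} then yields the claimed identity.

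The main \emph{potential} obstacle is verifying that the equivariant rectification of \eqref{eq: equivariant chain map} produced by the acyclic carrier theorem is genuinely natural in the multiplicative coefficient ring. I expect this to be routine: the equivariant chain map is unique up to equivariant chain homotopy, so any morphism of multiplicative coefficient systems induces a well-defined compatible morphism of cup-$i$ products at the cohomology level, which is all that is needed. Once this naturality is in place, the proof amounts only to writing down the diagram of coefficient maps and tracing $u \otimes u$ around it.
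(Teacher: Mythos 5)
Your proposal is correct and takes essentially the same route as the paper, whose entire proof consists of combining Lemma \ref{lemma: top generalized sq} (itself ``immediate upon unwinding the definitions'') with the transport to \'etale cohomology via \S \ref{subsec: etale homotopy}; your argument is precisely that unwinding, with the crux correctly identified as the factorization $2^{n-1}\mu = [2^{n-1}]\circ\bar{\mu}\circ(\red_2\otimes\red_2)$ of the coefficient multiplication, the cup-$i$ construction being natural in the multiplicative coefficient data. The only blemish is a harmless indexing slip: since $\wt{\Sq}^i = \wt{\Sq}_{r-i}$ is computed by $u \smile_{r-i} u$, the relevant parity condition is that $i$ (not $r-i$) is even, consistent with the hypothesis of the lemma.
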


\begin{proof}
This follows from combining Lemma \ref{lemma: top generalized sq}, which implies the formula for all simplicial complexes in particular, and \S \ref{subsec: etale homotopy}, which transports the result to \'{e}tale cohomology.
\end{proof}

\section{Bockstein operations}\label{sec: bockstein}

The goal of this section is to prove Theorem \ref{thm: pairing identity}, which expresses the pairing of Definition \ref{def: pairing mod 2^n} in terms of cohomology operations. Our argument originated from studying a generalized version of the Bockstein spectral sequence, and was motivated by a calculation in \cite{Browd61}. However, we have found it cleaner for expository purposes to present a proof with the language of spectral sequences stripped out. 

\subsection{Higher Bockstein operations}\label{subsec: higher bockstein}

The key technical ingredient in the proof is the study of ``higher Bockstein operations''. These form a family of cohomology operations $\{\beta_r\}$ ``growing off'' of the Bockstein $\beta$ in the following sense. We have
\[
\beta_1 := \beta \co H^*_{\et}(X; \Z/2^n\Z(j)) \rightarrow H^{*+1}_{\et}(X; \Z/2^n\Z(j)).
\]
The operation $\beta_r$ is only defined on the kernel of $\beta_1, \ldots, \beta_{r-1}$, and its image is only defined modulo the image of $\beta_1, \ldots, \beta_{r-1}$. (These operations $\beta_r$ arise as differentials in a spectral sequence, which explains this structure.) In fact we only need $\beta_1$ and $\beta_2$ for our purposes.

\begin{remark}
We must now confront the technical subtlety that $\Z_{\ell}$-\'{e}tale cohomology is not, as defined classically, the cohomology of a cochain complex with $\Z_{\ell}$-coefficients, while the state of affairs is much more naturally reasoned about and phrased in terms of ``integral cochains''. It is straightforward to translate all our statements into ones about compatible systems of $\ell$-adic sheaves; for example the short exact sequence 
\[
0 \rightarrow \Z_\ell(j) \xrightarrow{\ell^n}	 \Z_{\ell}(j)  \rightarrow \Z/\ell^n \Z(j) \rightarrow 0
\]
should be replaced by the system of sequences
\[
0 \rightarrow \Z/\ell^N \Z(j) \xrightarrow{\ell^n}	 \Z/\ell^{N+n}\Z(j)  \rightarrow \Z/\ell^n \Z(j) \rightarrow 0
\]
for all $N \gg 0$. We leave this translation to the reader so as not to complicate our exposition. (It should be also be possible to deal with this problem more uniformly using the pro-\'{e}tale topology of Bhatt-Scholze \cite{BS15}.) 
\end{remark}

Recall that the Bockstein $\beta$ is induced by the short exact sequence of sheaves
\[
0 \rightarrow \Z/\ell^n \Z (j) \xrightarrow{\ell^n}	 \Z/\ell^{2n} \Z(j)  \rightarrow \Z/\ell^n \Z(j) \rightarrow 0.
\]
Thanks to the commutative diagram 
\[
\begin{tikzcd}
C^*(X; \Z_\ell(j)) \ar[d] \ar[r, "\ell^n"] & C^*(X; \Z_\ell(j)) \ar[r] \ar[d] & C^*(X; \Z/\ell^n\Z(j)) \ar[d, equals] \\
C^*(X; \Z/\ell^n\Z(j)) \ar[r] &  C^*(X; \Z/\ell^{2n}\Z(j)) \ar[r] &  C^*(X; \Z/\ell^n\Z(j))
\end{tikzcd}
\] 
it admits the following alternative description. For $x \in H^*_{\et}(X; \Z/\ell^n\Z(j))$ we let $\wt{x} \in C^*_{\et}(X; \Z/\ell^n \Z(j))$ be a representative for $x$, and $a$ a lift of $\wt{x}$ in $C^*_{\et}(X; \Z_{\ell}(j))$. Since $\wt{x}$ is a cocycle, $da$ is divisible by $\ell^n$ within $C^{*+1}_{\et}(X; \Z_{\ell}(j))$, so we may define 
\[
\wt{\beta}(x) := \left[\frac{1}{\ell^n}da \right]\in  H^{*+1}_{\et}(X; \Z_{\ell}(j)).
\]
Then we define 
\[
\beta(x) := \ol{\wt{\beta}(x) }\in H^{*+1}_{\et}(X; \Z/{\ell}^n \Z(j))
\]
to be the reduction of $\wt{\beta}(x)$ mod $\ell^n$. Note that $\wt{\beta}$ is the boundary map for the short exact sequence of sheaves
\[
0 \rightarrow \Z_\ell(j) \xrightarrow{\ell^n}	 \Z_{\ell}(j)  \rightarrow \Z/\ell^n \Z(j) \rightarrow 0.
\]

\begin{defn}\label{defn: beta_2}
We define operations
\begin{align*}
\wt{\beta}_2 \co (\ker \beta_1 \subset H^{*}_{\et}(X; \Z/\ell^n \Z(j)) ) & \rightarrow H^{*+1}_{\et}(X; \Z_{\ell}(j)) / \Ima \wt{\beta}_1 \\
\beta_2  = \ol{\wt{\beta}_2} \co (\ker \beta_1 \subset H^{*}_{\et}(X; \Z/\ell^n \Z(j)) ) &\rightarrow H^{*+1}_{\et}(X; \Z/\ell^n\Z(j)) / \Ima \beta_1
\end{align*}
as follows. If $x \in \ker \beta_1$, then (keeping the notation of the preceding paragraph) we have 
\[
\ol{\frac{1}{\ell^n}da} = d \wt{y} \text{  for some  } \wt{y} \in C^{*}_{\et}(X; \Z/\ell^n \Z(j)),
\]
where the overline means reduction mod $\ell^n$. Hence we may and do choose a lift $b \in C^{*}_{\et}(X; \Z_{\ell} (j)) $ of $\wt{y}$ such that $\frac{1}{\ell^n} d a \equiv d b\mod{\ell^n}$, or in other words 
\[
d a \equiv d (\ell^n b) \mod{\ell^{2n}}.
\]
Then we can form the cochain $\frac{1}{\ell^{2n}} d (a-\ell^n b) \in C^{*}_{\et}(X; \Z_{\ell} (j)) $, which is evidently a cocycle. Finally, we define 
\begin{align*}
\wt{\beta}_2(x) := &\left[\frac{1}{\ell^{2n}} d (a-\ell^n b) \right] \in H^{*+1}_{\et}(X; \Z_{\ell}(j)) / \Ima \wt{\beta}_1, \\
\beta_2(x) := &\ol{\wt{\beta}_2(x) } \in H^{*+1}_{\et}(X; \Z/\ell^n\Z(j)) / \Ima \beta_1.
\end{align*}
We leave it to the reader to check that this is indeed well-defined.
\end{defn}

It is straightforward to define $\beta_r$ in a similar way for all $r$. Since we only need $\beta_1$ and $\beta_2$, we do not spell out the explicit construction. 


 In what follows, we will focus on the operations introduced in Definition \ref{defn: beta_2} for $\ell=2$. 

 \begin{prop}\label{prop: beta_2 identity}
Let $\ell=2$ in Definition \ref{defn: beta_2}. For any $x \in H^{2k}_{\et}(X;\Z/2^n\Z(j))$, we have the following identity:
\[
\beta_2(2^{n-1} x^2)= [x \cdot \beta (x) - \wt{\Sq}^{2k} (\beta (x)) ] \in H^{2k+1}_{\et}(X;\Z/2^n\Z(2j))/\Ima \beta.
\]
 \end{prop}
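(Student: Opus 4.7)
The plan is to unwind Definition~\ref{defn: beta_2} at the cochain level. Let $x \in H^{2k}_{\et}(X;\Z/2^n\Z(j))$, choose a cocycle representative $\wt{x} \in C^{2k}_{\et}(X;\Z/2^n\Z(j))$, and lift it to an integral cochain $a \in C^{2k}_{\et}(X;\Z_2(j))$. Write $da = 2^n c$, so that $\ol{c}$ represents $\beta(x)$; observe $dc = 0$ since $\Z_2$ is torsion-free. A natural integral lift of a representative for $2^{n-1}x^2$ is $A := 2^{n-1}(a\smile_0 a) \in C^{4k}_{\et}(X;\Z_2(2j))$.

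I would first verify $\beta_1(2^{n-1}x^2)=0$ and produce an explicit cochain primitive needed to feed into $\beta_2$. The Leibniz rule gives $\tfrac{1}{2^n}dA = 2^{n-1}(c\cdot a + a\cdot c)$, and the $i=1$ case of the cup-$i$ coboundary formula \eqref{eq: coboundary formula} yields
\[
d(\wt{x}\smile_1 \ol{c}) \equiv \wt{x}\cdot \ol{c} - \ol{c}\cdot \wt{x} \pmod{2^n}.
\]
Combining these gives $\tfrac{1}{2^n}dA \equiv d\bigl(2^{n-1}(\wt{x}\smile_1 \ol{c})\bigr) \pmod{2^n}$, so the witness $\wt{y} := 2^{n-1}(\wt{x}\smile_1 \ol{c})$ lifts integrally to $b := 2^{n-1}(a\smile_1 c)$.

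The next step is to compute $\tfrac{1}{2^{2n}}d(A - 2^n b)$. Applying \eqref{eq: coboundary formula} again at the integral level, together with $da = 2^n c$ and $dc = 0$, produces
\[
d(a\smile_1 c) = -2^n\, c\smile_1 c + a\cdot c - c\cdot a,
\]
so $d(A - 2^n b) = 2^{2n}\,c\cdot a + 2^{3n-1}\,c\smile_1 c$. Dividing by $2^{2n}$, reducing mod $2^n$, and exchanging $\ol{c}\cdot\wt{x}$ for $\wt{x}\cdot\ol{c}$ up to a coboundary via graded commutativity yields
\[
\beta_2(2^{n-1}x^2) = \bigl[\wt{x}\cdot \ol{c} + 2^{n-1}\,\ol{c}\smile_1 \ol{c}\bigr] \in H^{4k+1}_{\et}(X;\Z/2^n\Z(2j))/\Ima\beta.
\]

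To finish, I would identify the second term with $\wt{\Sq}^{2k}(\beta(x))$ using Lemma~\ref{lemma: etale generalized sq}: since $\beta(x)$ has odd degree $2k+1$, the classical square $\Sq^{2k}$ applied to $\red_2\beta(x)$ is by definition the cup-$1$ self-product, so $\wt{\Sq}^{2k}(\beta(x)) = [2^{n-1}]\circ\Sq^{2k}\circ\red_2(\beta(x)) = [2^{n-1}\,\ol{c}\smile_1 \ol{c}]$. As this class lies in the image of $[2^{n-1}]$, it is $2$-torsion, so the sign in front is immaterial and we recover the identity as stated. The main delicate point in the argument is the cochain-level sign and divisibility bookkeeping, which is why we work in a model (e.g.\ the pro-\'{e}tale cochains flagged in the remark preceding Definition~\ref{defn: beta_2}) where $\Z_2$-valued cochains literally exist and the divisions by powers of $2$ are unambiguous.
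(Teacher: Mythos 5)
Your proposal is correct and follows essentially the same route as the paper's proof: lift $x$ to an integral cochain $a$ with $da = 2^n c$, take $2^{n-1}a^2$ corrected by the cup-$1$ term $2^{2n-1}(a\smile_1 c)$ as the integral witness whose coboundary is divisible by $2^{2n}$, and read off $\beta_2(2^{n-1}x^2)$ as $x\cdot\beta(x)$ plus the $2$-torsion class $[2^{n-1}\,\ol{c}\smile_1\ol{c}] = \wt{\Sq}^{2k}(\beta(x))$. The only differences are organizational (you make the $\beta_1$-vanishing witness explicit and carry the correction term with the opposite sign, which changes the cochain only by a coboundary mod $2^{2n}$).
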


\begin{proof}
Note that since $\beta$ is a derivation, it indeed kills $2^{n-1} x^2$, hence $2^{n-1} x^2$ indeed lives in $\ker \beta$ so that $\beta_2(2^{n-1} x^2)$ is defined.

Let $a$ be any integral cochain in $C^{2k}_{\et}(X; \Z_{2}(j))$ lifting a representative for $x$. Let $y := \beta(x) \in H^{2k+1}_{\et}(X; \Z/2^n \Z(j))$. Then by the definition of $\beta$, we have $da = 2^n b$ where $b\in C^{2k+1}_{\et}(X; \Z_{2}(j))$ lifts a representative for $y$. 

According to the discussion in Definition \ref{defn: beta_2}, $\beta_2(2^{n-1} x^2)$ is calculated by finding an integral cochain lifting a representative for $2^{n-1} x^2$, whose boundary is divisible by $2^{2n}$. We check that $2^{n-1} a^2 + 2^{2n-1} (a \smile_1 b)$ is such an integral cochain, using Example \ref{lemma: cup-1}: 
\begin{align*}
d(2^{n-1} a^2 + 2^{2n-1} (a \smile_1 b) ) &= 2^{n-1} (a \cdot 2^n b + 2^n b \cdot a - 2^{2n} (b \smile_1 b) + 2^{n} (ab-ba)) \\
&= 2^{n-1} (2^{n+1} ab - 2^{2n} (b \smile_1 b) ) \\
&= 2^{2n}(ab-2^{n-1} (b \smile_1 b)).
\end{align*}
Hence by Definition \ref{defn: beta_2} we have
\begin{align*}	
\beta_2(2^{n-1}	x^2) &= [\ol{\frac{1}{2^{2n}} d (2^{n-1} a^2 - 2^{2n-1} (a \smile_1 b))}] \\
&= [\ol{ ab-2^{n-1} (b \smile_1 b)}] \\
&= [xy -[\ol{2^{n-1}(b \smile_1 b)}]].
\end{align*}
We then conclude by using Example \ref{lemma: cup-1} again to identify $[\ol{2^{n-1}(b \smile_1 b)}] = \wt{\Sq}^{2k} y$.
\end{proof}

\begin{thm}\label{thm: pairing identity}
Let $X $ be a smooth projective variety over $\F_q$ of dimension $2d$. For $x \in H^{2d}_{\et}(X;\Z/2^n \Z(d))$, we have 
\[
x \smile \beta (x) = \wt{\Sq}^{2d} (\beta( x)) \in H^{4d+1}_{\et}(X;\Z/2^n \Z(2d)).
\]
\end{thm}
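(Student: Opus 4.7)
The plan is to reduce the theorem to showing $\beta_2(2^{n-1}x^2) = 0$ inside $H^{4d+1}_{\et}(X; \Z/2^n\Z(2d))$. Granted this, Proposition \ref{prop: beta_2 identity} (applied with $*=2d$, $j=d$) gives
\[
\beta_2(2^{n-1}x^2) = x \smile \beta(x) - \wt{\Sq}^{2d}(\beta(x)) \in H^{4d+1}_{\et}(X; \Z/2^n\Z(2d))/\Ima\beta,
\]
and the coset ambiguity collapses because $\Ima\beta$ in degree $4d+1$ is zero by Lemma \ref{lem: top boundary vanishes}. So the only thing left is the vanishing of $\beta_2(2^{n-1}x^2)$.

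For this I would pass to the integral lift $\wt{\beta}_2$ of Definition \ref{defn: beta_2}. The crucial input is that $H^{4d+1}_{\et}(X;\Z_2(2d))$ is torsion-free, isomorphic to $\Z_2$. This can be seen from the Hochschild-Serre spectral sequence: in total degree $4d+1$ the only possibly nonzero $E_2$ term is $H^1(\Gal(\ol{\F}_q/\F_q), H^{4d}_{\et}(X_{\ol{\F}_q};\Z_2(2d)))$, and Poincar\'{e} duality on the geometric fiber gives $H^{4d}_{\et}(X_{\ol{\F}_q};\Z_2(2d)) \cong \Z_2$ with trivial Galois action, so the abutment is $H^1(\hat{\Z},\Z_2) \cong \Z_2$. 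Consequently $\Ima \wt{\beta}_1 \subset H^{4d+1}_{\et}(X;\Z_2(2d))$, being composed of $2^n$-torsion classes of the form $[\tfrac{1}{2^n}da]$, must vanish. Thus $\wt{\beta}_2(2^{n-1}x^2)$ is a genuine element of $H^{4d+1}_{\et}(X;\Z_2(2d))$, not merely a coset; and by construction it is $2^{2n}$-torsion (the formula $\wt{\beta}_2(y) = [\tfrac{1}{2^{2n}}d(a - 2^n b)]$ exhibits it as a $2^{2n}$-th of a boundary). Torsion-freeness of $\Z_2$ forces it to be zero. Reducing mod $2^n$ gives $\beta_2(2^{n-1}x^2) = 0$, completing the proof.

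I do not anticipate a real obstacle: all of the heavy lifting has been done in Proposition \ref{prop: beta_2 identity} and in the integral cochain manipulations of Definition \ref{defn: beta_2}. The most delicate point is the foundational bookkeeping with integral $\ell$-adic cochains flagged in the remark preceding Definition \ref{defn: beta_2}; one can either work pro-\'{e}tale in the sense of Bhatt-Scholze or use compatible systems. Conceptually, the argument says that the Bockstein spectral sequence associated to the inverse system $\{\Z/2^N \Z(2d)\}_N$ degenerates in the top nonvanishing degree $4d+1$, precisely because the integral cohomology is already torsion-free there.
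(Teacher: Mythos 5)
Your proposal is correct and follows essentially the same route as the paper: apply Proposition \ref{prop: beta_2 identity}, then kill both the coset ambiguity and the class $\beta_2(2^{n-1}x^2)$ itself by lifting to $\wt{\beta}$, $\wt{\beta}_2$ and observing that their images are $2^n$- and $2^{2n}$-torsion in $H^{4d+1}_{\et}(X;\Z_2(2d)) \cong \Z_2$, which is torsion-free by Poincar\'{e} duality. The only cosmetic difference is that you justify the torsion-freeness via an explicit Hochschild--Serre computation, which the paper leaves implicit.
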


\begin{proof}
By Proposition \ref{prop: beta_2 identity}, we have the identity 
\[
\beta_2(2^{n-1} x^2)= [x \cdot \beta (x) - \wt{\Sq}^{2k} (\beta (x)) ] \in H^{4d+1}_{\et}(X;\Z/2^n\Z(2d))/\Ima \beta.
\]
Therefore we will be done if we can show that the images of $\beta$ and $ \beta_2$ in $H^{4d+1}_{\et}(X;\Z/2^n \Z(2d))$ are both $0$. Since $\beta$ and $\beta_2$ are the reductions of $\wt{\beta}$ and $\wt{\beta}_2$, it suffices to prove the stronger statement that $\wt{\beta}$ and $\wt{\beta}_2$ vanish in the appropriate degree, which is what we shall do. 

Note that the image of $\wt{\beta}$ is automatically $2^n$-torsion. Similarly, from the definition of $\wt{\beta}_2$ it is immediate that its image is $2^{2n}$-torsion. Indeed, referring to Definition \ref{defn: beta_2} we see that $\ell^{2n} \wt{\beta}_2(x) = [d (a -\ell^n b) ] $ is manifestly a coboundary. (In general, $\Ima(\wt{\beta}_r)$ is $2^{rn}$-torsion.) But by Poincar\'{e} duality we have
\[
H^{4d+1}_{\et}(X;\Z_2^n(2d)) \cong \Z_2
\]
is torsion-free, so the images of $\wt{\beta}$ and $\wt{\beta}_2$ in $H^{4d+1}_{\et}(X;\Z/2^n\Z(2d))$ are necessarily $0$. 
\end{proof}

\section{Stiefel-Whitney classes in \'{e}tale cohomology}\label{sec: SW}

Theorem \ref{thm: pairing identity} recasts the pairing $\langle \cdot, \cdot \rangle_n$ in terms of the (generalized) Steenrod squares. But in order for this formula to be useful, we need some way to explicitly calculate the relevant Steenrod operations. In the classical theory of smooth manifolds there is a formula, due originally to Wu, relating the action of certain Steenrod operations as cupping with Stiefel-Whitney classes. This section and the next are concerned with establishing an analogue of this formula in absolute \'{e}tale cohomology for smooth proper varieties over finite fields. The first task, which we take up in this section, is to define an appropriate notion of Stiefel-Whitney classes. 

Much of the material of this section was influenced by \cite{Urabe96}. The definition of the classes $w_i$ already appears in \cite{Urabe96}, though it is phrased in less generality there. The idea to find lifts of the $w_i$ in terms of Chern classes was also inspired by \cite{Urabe96}, as is the proof of Theorem \ref{thm: chern classes}. 

\subsection{Cohomology with supports}\label{subsec: steenrod on relative coh}

Let $i \co X \hookrightarrow Y$ be a closed subscheme. We recall the definition of the ``cohomology of $Y$ with supports in $X$'' \cite[\S I.10]{FK88}.
 Let $j \colon U \hookrightarrow Y$ denote the inclusion of the (open) complement of $X$ in $Y$. Then $H_X^k(Y; \Cal{F})$ is defined to be the $k$th right derived functor of 
\[
\Cal{F} \mapsto \ker (\Cal{F}(Y) \mapsto j_*j^* \Cal{F}(Y)).
\]


We want to define Steenrod operations on $H_X^k(Y; \Cal{F})$. Since we have been in the habit of defining Steenrod operations via \'{e}tale homotopy theory, we need to realize the cohomology with supports in terms  of \'{e}tale homotopy theory, as the cohomology of a certain pro-space. This is explained in \cite[\S 14]{Fr82}. The key features of this construction are summarized below.  

\begin{defn}
We define $\Et_X(Y) $ to be the mapping cylinder of $\Et(U) \rightarrow \Et(Y)$ in the sense of \cite[p. 140]{Fr82}. For any locally constant sheaf $\Cal{F}$ on the \'{e}tale site of $Y$, we denote by $\Et(\Cal{F})$ the corresponding local coefficient system on $\Et(Y)$ as in \S  \ref{subsec: etale homotopy}, and also for its pullback to $\Et_X(Y) $ in the sense of \cite[p. 140]{Fr82}. Then we have a canonical identification $H^*(\Et_X(Y); \Et(\Cal{F})) \xrightarrow{\sim} H^*_X(Y; \Cal{F})$ \cite[Proposition 14.3, Corollary 14.5, Proposition 14.6]{Fr82}, and we define the Steenrod operations on $H^*_X(Y; \Cal{F})$ via this identification as in \S \ref{sec: steenrod}.
\end{defn}

\subsection{Construction of \'{e}tale Stiefel-Whitney classes}\label{subsec: SW construction}

Let $k$ be a field of characteristic not equal to $2$. Let $i \colon X \hookrightarrow Y$ be a codimension $r$ closed embedding of smooth varieties over $k$. Then we have a \emph{cycle class} $s_{X/Y} \in H^{2r}_X(Y; \Z_{\ell}(r))$, which can be described as the image of $1$ under the \emph{Gysin isomorphism}   
\begin{equation}\label{Gysin}
\phi \colon H^j(X; \Cal{F}) \xrightarrow{\sim}  H^{j+2r}_X(Y; \Cal{F}(r)),
\end{equation}
which holds for any locally constant constructible sheaf $\Cal{F}$ on $Y$ \cite[\S I.10]{FK88}.

We are going to apply this with $Y$ being the total space of a vector bundle $E$ over $X$, and $i \co X \hookrightarrow E$ being the zero section. 

\begin{defn}
Let $ E $ be a vector bundle over $X$. We define the \emph{$j$th Stiefel-Whitney class} of $E$ by 
\begin{equation}\label{eq: defn w_i}
w_j(E) = \phi^{-1}(\Sq^j s_{X/E} ).
\end{equation}
Define the \emph{total Stiefel-Whitney class} to be $w(E) := \sum w_i(E)$. If no vector bundle is mentioned, then by default we set $w_i := w_i(TX)$ and $w := \sum w_i$. 
\end{defn}

There is a possibly more intuitive way to phrase the equation \eqref{eq: defn w_i}, which we will use later. The Gysin isomorphism \eqref{Gysin} says that $H_X^{*} (E; \Z/2\Z)$ is a free rank one module over $H^*_{\et}(E; \Z/2\Z)$, which can be identified with $H^*_{\et}(X; \Z/2\Z)$ via $\pi^*$ since $E$ is a vector bundle over $X$. Under this identification \eqref{eq: defn w_i} is equivalent to 
\begin{equation}\label{eq: second defn w_i}
\Sq^i (s_{X/E}) = w_i \cdot s_{X/E}. 
\end{equation}

\begin{remark}
The reason that we call these ``Stiefel-Whitney classes'' is that Thom observed in \cite{Thom51} that an exactly analogous construction for manifolds produces the usual Stiefel-Whitney classes.\footnote{Unfortunately, this notation clashes with the established tradition of using the term ``Stiefel-Whitney classes'' to denote the characteristic classes of quadratic bundles. The two definitions coincide for smooth manifolds, but in general there is no relation between them.} The construction goes as follows (a reference is \cite[\S 8]{MS74}). Let $M$ be a topological manifold and $E$ be a vector bundle of rank $r$ over $M$. Let $i \colon M \hookrightarrow E$ denote the inclusion of $M$ as the zero section of $E$. Let $E_0 = E - i(M)$. We have a Thom isomorphism 
\[
\phi \colon H^i(M; \Z/2\Z) \cong  H^{i+2r} (E, E_0; \Z/2\Z),
\]
and $w_i (E) = \phi^{-1} (\Sq^i \phi(1))$.

\end{remark}

\subsection{Steenrod squares of Stiefel-Whitney classes}\label{app: A}

The following technical lemma is needed later in \S \ref{subsec: lifting SW}. The reader may safely skip this subsection for now and refer back to it when necessary. 

\begin{lemma}\label{lem: squares of SW classes}
For any $i$ and $j$, $\Sq^i(w_j)$ can be expressed as a polynomial in the Stiefel-Whitney classes $\{w_l\}$.
\end{lemma}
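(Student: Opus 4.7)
The strategy is to mimic the classical topological argument via the splitting principle. The key auxiliary facts I would establish first are: (i) a Whitney sum formula $w(E \oplus F) = w(E) \smile w(F)$, which should follow from the multiplicativity of the cycle/Thom class under external direct sum together with the Cartan formula for $\Sq$ applied to the definition in \eqref{eq: second defn w_i}; and (ii) a splitting principle, namely the existence of a morphism $\pi \co Y \to X$ (for example the full flag bundle of a vector bundle $E$ on $X$) such that $\pi^*$ is injective on $H^*_{\et}(-; \Z/2\Z)$ and $\pi^*E$ is filtered by subbundles with line bundle quotients $L_1, \ldots, L_r$. By (i) applied to the associated graded, $\pi^*w(E) = \prod_{i=1}^r(1+x_i)$ where $x_i := w_1(L_i) \in H^1_{\et}(Y; \Z/2\Z)$, so $\pi^*w_j(E) = e_j(x_1, \ldots, x_r)$ is the $j$th elementary symmetric polynomial.

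Granted this reduction, the computation of $\Sq^i(w_j)$ is formal. Since $|x_i| = 1$, the special cases of the Steenrod squares from \S\ref{subsec: classical Steenrod} give $\Sq^0(x_i) = x_i$, $\Sq^1(x_i) = x_i^2$, and $\Sq^k(x_i) = 0$ for $k>1$; in total form, $\Sq(1+x_i) = 1 + x_i + x_i^2$. By the Cartan formula,
\[
\Sq\bigl(\pi^*w(E)\bigr) = \prod_{i=1}^{r}(1 + x_i + x_i^2),
\]
and extracting the component in degree $j+i$ expresses $\pi^*\Sq^i(w_j(E))$ as a symmetric polynomial in $x_1, \ldots, x_r$, hence as a polynomial in $e_1, \ldots, e_r = \pi^*w_1(E), \ldots, \pi^*w_r(E)$. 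By naturality of $\Sq^i$ and injectivity of $\pi^*$, this polynomial identity descends to $X$, giving the desired expression for $\Sq^i(w_j)$. Specializing to $E = TX$ yields the lemma.

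The main technical obstacle lies in verifying (i) and (ii) in the \'{e}tale setting. For the splitting principle, one uses the projective bundle formula in \'{e}tale cohomology to see that $H^*_{\et}(\PS(E); \Z/2\Z)$ is a free $H^*_{\et}(X; \Z/2\Z)$-module, and then iterates on the flag variety, which furnishes the required injective $\pi^*$. The Whitney sum formula reduces to showing that under the identification $(E \oplus F)|_{X} \cong E|_X \times_X F|_X$, the cycle class of the zero section of $E \oplus F$ is the external product of the cycle classes of the zero sections of $E$ and $F$; this follows from compatibility of the Gysin map with transverse intersection of smooth closed subschemes, which is standard in \'{e}tale cohomology.
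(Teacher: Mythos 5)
Your overall strategy (splitting principle plus Cartan formula) is a reasonable classical template, but the central computation rests on a false premise in the \'{e}tale setting, so there is a genuine gap. The Stiefel-Whitney classes of this paper are defined through the Gysin isomorphism $H^j(X) \xrightarrow{\sim} H^{j+2r}_X(E)$ for a rank $r$ algebraic bundle $E$, so they behave like the Stiefel-Whitney classes of the underlying rank $2r$ real bundle of a complex bundle, not of a rank $r$ real bundle. In particular $w_i(E)$ can be nonzero in degrees up to $2r$, and for an algebraic line bundle $L$ the total class is $w(L) = 1 + w_1(L) + w_2(L)$ with $w_1(L) = \alpha$ and $w_2(L) = \ol{c}_1(L)$ (this is exactly the line-bundle case of Theorem \ref{thm: chern classes}); it is \emph{not} $1 + x$ for a degree-one class $x$. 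Consequently the identities $\pi^* w(E) = \prod_i (1+x_i)$ with $|x_i|=1$, $\pi^* w_j(E) = e_j(x_1,\dots,x_r)$, and $\Sq(1+x_i) = 1 + x_i + x_i^2$ all fail on degree grounds alone: $w_{2r}(E)$ lives in degree $2r$ while $e_r(x_1,\dots,x_r)$ would live in degree $r$. The argument could in principle be repaired by replacing $1+x_i$ with $1+\alpha+\ol{c}_1(L_i)$ and computing $\Sq^1 \ol{c}_1$ via Lemma \ref{lemma: boundary difference}, but this forces you to first establish the Whitney formula and the full Chern-class description of Theorem \ref{thm: chern classes}, and then to check that the class $\alpha$ (which is not visibly a polynomial in the $w_k(TX)$ when the rank is even, since then $w_1(TX)=0$) cancels from the final answer.

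The paper's proof is more economical and avoids the splitting principle entirely: it is a double induction on $j$ and then $i$, using only the defining relation $\Sq^j s_{X/TX} = w_j \smile s_{X/TX}$, the Cartan formula applied to $\Sq^i(w_j \smile s_{X/TX})$, and the Adem relations to rewrite $\Sq^i \Sq^j$ (for $0<i<j$) in terms of compositions $\Sq^{i+j-k}\Sq^k$ with $k<j$, to which the inductive hypothesis applies. That route needs no auxiliary geometry beyond the Thom class itself, and it also makes clear that the resulting polynomial is the same universal formula as in algebraic topology.
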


\begin{remark}
The analogue of Lemma \ref{lem: squares of SW classes} for singular cohomology is immediate from the fact that the ring $H^*(\mrm{BO}(\R); \Z/2\Z)$ is generated by Stiefel-Whitney classes. But because of the way that we have defined the classes $w_i$ in \'{e}tale cohomology, Lemma \ref{lem: squares of SW classes} is not quite obvious. 
\end{remark}

\begin{proof}[Proof of Lemma \ref{lem: squares of SW classes}]
We will use the identities of Steenrod squares from \S \ref{subsec: classical Steenrod}. Note that we may assume that $i <j$, since for $i>j$ we have $\Sq^i (w_j) = 0$ and for $i = j$ we have $\Sq^i(w_j) = w_j^2$.

We induct on $j$, and then (for fixed $j$) on $i$, with the base case $j=0$ being trivial, and the base cases $i=0$ being trivial for any $j$ since $\Sq^0 = \Id$. Consider the expression 
\[
\Sq^i \Sq^j (s_{X/TX}).
\] 
On the one hand, we have by \eqref{eq: second defn w_i} that 
\[
\Sq^i \Sq^j (s_{X/TX}) = \Sq^i(w_j \cdot s_{X/TX}).
\]
By the Cartan formula,
\begin{align*}
\Sq^i(w_j \cdot s_{X/TX}) &= \sum_{k+\ell=i} \Sq^k(w_j) \Sq^{\ell} (s_{X/TX}) \\
&=  \left( \Sq^i(w_j) + \sum_{k<i} \Sq^k (w_j) w_{i-k}   \right) s_{X/TX}.
\end{align*}
By the induction hypothesis, $\Sq^k(w_j)$ is a polynomial in the $\{w_l\}$ for each $k<i$, so the upshot is that
\begin{equation}\label{eq:  double Sq}
\Sq^i \Sq^j (s_{X/TX}) = (\Sq^i(w_j) + \text{Poly}(\{w_l\}))s_{X/TX}.
\end{equation}
From \eqref{eq:  double Sq} it is clearly sufficient to show that $\Sq^i \Sq^j (s_{X/TX})$ is a polynomial in the $\{w_l\}$ times $s_{X/TX}$. For this we use the Adem relations: for $0<i<2j$ we have 
\[
\Sq^i\Sq^j = \sum_{k=0}^{[i/2]} \binom{j-1-k}{i-2k} \Sq^{i+j-k} \Sq^k,
\]
hence 
\[
\Sq^i \Sq^j (s_{X/TX}) = \sum_{k=0}^{[i/2]} \binom{j-1-k}{i-2k} \Sq^{i+j-k} (w_k \cdot s_{X/TX}).
\]
Every index $k$ in this sum is strictly less than $j$ since we assumed $i<j$, so every summand is a polynomial in the $\{w_l\}$ times $s_{X/TX}$ by the induction hypothesis, which is what we wanted.
\end{proof}

\begin{remark}
Since the preceding argument could have been carried out equally well in singular cohomology, the proof makes it clear that our $\Sq^i w_j$ is given by the same formula as in algebraic topology. 
\end{remark}

\subsection{Properties of the Stiefel-Whitney classes}\label{SWproperties}

We now record that the Stiefel-Whitney classes, as constructed in \S \ref{subsec: SW construction}, enjoy the usual properties of topological Stiefel-Whitney classes. 

\begin{enumerate}
\item We have $w_i(E) \in H^i(X; \Z/2\Z)$, with $w_0=1$ and $w_i=0$ for $i> 2 \rank E$. 

\item ({\sc naturality}) If $f \colon X' \rightarrow X$ then 
\[
f^* w_i(E) = w_i(f^* E).
\]
\item ({\sc Whitney product formula}) We have 
\[
w_i(E \oplus E') = \sum_{k=0}^i w_k(E) \smile w_{i-k}(E').
\]
If we set $w = \sum w_i$ to be the total Stiefel-Whitney class, then this can be written more succinctly as
\[
w(E \oplus E') = w(E) \cdot w(E').
\]

\end{enumerate}

\noindent \textbf{Proofs.} 
It is well-known in the topological setting (cf. \cite[\S 8]{MS74}) that the characteristic properties of Stiefel-Whitney classes can be formally derived from those of the Steenrod squares. Since our \'{e}tale Stiefel-Whitney classes are also based on Steenrod operations, essentially the same proofs go through. Nonetheless, we spell them out because they will be used in the proof of Theorem \ref{thm: chern classes} below. 

\begin{enumerate}
\item Immediate from the fact that $\Sq^0 = \Id$ and $\Sq^i$ vanishes on $H^j$ if $i>j$.
\item Immediate from the naturality of the Gysin map and Steenrod squares.
\item We begin by considering a general setup. Suppose we have two closed embeddings of smooth proper varieties
\begin{align*}
i \colon X \hookrightarrow Y & \quad \text{codimension $r$}, \\
i' \colon X' \hookrightarrow Y & \quad \text{codimension $r'$}.
\end{align*}
We consider the two corresponding Gysin maps obtained:
\begin{align*}
\phi \colon H^*(X; \Z/2\Z) & \cong H^{*+r}_X(Y; \Z/2\Z), \\
\phi' \colon H^*(X'; \Z/2\Z) & \cong H^{*+r}_{X'}(Y'; \Z/2\Z).
\end{align*}
These send $\phi(1) = s_{X/Y} $ and $\phi'(1) =s_{X'/Y'}$. By the compatibility of the Gysin map for products, we have that for the closed embedding $X \times X' \hookrightarrow Y \times Y$, the Gysin isomorphism 
\[
\phi \smile \phi' \colon H^*(X \times X'; \Z/2\Z) \cong  H^{*+r+r'}_{X \times X'}(Y \times Y'; \Z/2\Z) 
\]
sends $1 \mapsto s_{X/Y}  \smile s_{X'/Y'}$. Now taking $Y$ and $Y'$ to be the total spaces of $E$ and $E'$, and applying the Cartan formula of $\Sq$ and the definition of Stiefel-Whitney classes, we obtain
\begin{align*}
w_{X \times X'} \smile (s_{X/Y} \smile s_{X'/Y'})  &= \Sq (s_{X/Y} \smile s_{X'/Y'}) \\
& = \Sq  (s_{X/Y})  \smile \Sq (s_{X'/Y'} ) \\ 
&= (w_X \smile s_{X/Y}) \smile (w_{X'} \smile s_{X'/Y'}) \\
&= (w_X \smile w_{X'}) s_{X/Y} \smile s_{X'/Y'}.
\end{align*}
Finally, pulling back via the diagonal $\Delta \colon X \hookrightarrow X \times X$ and using naturality gives the result. 

\end{enumerate}

It is formal that the Whitney product formula for direct sums implies it for extensions: 

\begin{lemma}
If 
\[
0 \rightarrow E' \rightarrow E \rightarrow E'' \rightarrow 0
\]
is a short exact sequence of vector bundles on $X$, then 
\[
w(E) = w(E') \smile  w(E'').
\]
\end{lemma}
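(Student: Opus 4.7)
My plan is to reduce to the direct-sum case already established in \S \ref{SWproperties}(3) by constructing an algebraic $1$-parameter family of extensions interpolating $E$ and $E' \oplus E''$. Let $p \co X \times \A^1 \to X$ denote the projection and $t \in \Gamma(X \times \A^1, \mathcal{O})$ the standard coordinate, with $i_s \co X \hookrightarrow X \times \A^1$ denoting the closed embedding at $t = s$. The given extension is classified by a class $\xi \in \Ext^1_{\mathcal{O}_X}(E'', E')$; pulling back and multiplying by the global function $t$ yields $t \cdot p^* \xi \in \Ext^1_{\mathcal{O}_{X \times \A^1}}(p^* E'', p^* E')$, which corresponds to a vector bundle extension
\[
0 \to p^* E' \to \mathcal{E} \to p^* E'' \to 0
\]
on $X \times \A^1$. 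By construction $i_1^* \mathcal{E}$ has extension class $\xi$ and so is isomorphic to $E$, while $i_0^* \mathcal{E}$ has class $0$ and so is isomorphic to $E' \oplus E''$.

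By the naturality of \'{e}tale Stiefel-Whitney classes (\S \ref{SWproperties}(2)), I then have
\[
w(E) = i_1^* w(\mathcal{E}), \qquad w(E' \oplus E'') = i_0^* w(\mathcal{E}),
\]
so it suffices to show $i_0^* = i_1^*$ on $H^*_{\et}(X \times \A^1; \Z/2\Z)$. This follows from the $\A^1$-homotopy invariance of \'{e}tale cohomology with $\Z/2\Z$ coefficients (valid because $\mrm{char}(k) \neq 2$): the pullback $p^*$ is an isomorphism, and both $i_j^*$ for $j = 0, 1$ are two-sided inverses to $p^*$ since $p \circ i_j = \Id_X$. Applying the Whitney product formula for direct sums then gives $w(E) = w(E' \oplus E'') = w(E') \smile w(E'')$, as desired.

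I do not anticipate a substantial obstacle. The one point to be careful about is the invocation of $\A^1$-invariance, which requires $\mrm{char}(k) \neq 2$; this is the standing hypothesis throughout this section. The construction of $\mathcal{E}$ via scaling an extension class is a standard, purely formal maneuver that requires no new input beyond the $\mathcal{O}_X$-module structure on $\Ext^1$.
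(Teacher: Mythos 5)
Your proof is correct and is essentially the paper's own argument: the paper constructs the same interpolating bundle $\Cal{E}$ on $X \times \A^1$ concretely, by scaling the off-diagonal block $u_{ij}$ of the transition matrices by $t$, which is exactly the cocycle-level realization of your class $t \cdot p^*\xi \in \Ext^1(p^*E'', p^*E')$. The remaining steps --- naturality, the observation that $i_0^*$ and $i_1^*$ are both inverse to $p^*$ by $\A^1$-invariance, and reduction to the split case --- coincide with the paper's.
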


\begin{proof}
The proof is the same as for \cite[Lemma 2.7]{Urabe96}.

\end{proof}

\subsection{Lifting Stiefel-Whitney classes to integral cohomology} 

We shall see in \S \ref{sec: alternating} that it is crucial to know whether our Stiefel-Whitney classes lift to integral cohomology. The goal of this subsection  is to prove Theorem \ref{thm: chern classes}, which answers this question. 

Our first task is to address a technical subtlety that will come up in the proof of Theorem \ref{thm: chern classes}. There are the two short exact sequences
\begin{equation}\label{eq: Z/2 SES 1}
0 \rightarrow \Z/2\Z \rightarrow \Z/4\Z \rightarrow \Z/2\Z \rightarrow 0
\end{equation}
and 
\begin{equation}\label{eq: Z/2 SES 2}
0 \rightarrow \mu_2 \rightarrow \mu_4 \rightarrow \mu_2 \rightarrow 0.
\end{equation}
Since $\mu_2$ is canonically identified with $\Z/2\Z$, as we are not in characteristic $2$, both sequences induce Bockstein operations $H^*_{\et}(X; \Z/2\Z) \rightarrow H^{*+1}_{\et}(X; \Z/2\Z)$, but they are \emph{not} necessarily the same. In \S \ref{subsec: classical Steenrod} we noted that the Bockstein operation for \eqref{eq: Z/2 SES 1} is $\Sq^1$. Let us denote by $\beta^{(1)}$ the Bockstein operation for \eqref{eq: Z/2 SES 2}. We need to quantify the difference between these two operations. For this discussion, it will help to maintain a psychological distinction between $\mu_2$ and $\Z/2\Z$. 

\begin{lemma}\label{lemma: boundary difference}
Let $\alpha$ be the image of $1 \in H^0_{\et}(X; \Z/2\Z) \xrightarrow{\sim} H^0_{\et}(X; \mu_2)$ under the boundary map $\beta^{(1)}$. Then for all $c \in H^*_{\et}(X; \Z/2\Z) \xrightarrow{\sim} H^*_{\et}(X; \mu_2)$ we have
\[
\beta^{(1)}(c) = \Sq^1(c) +  \alpha \smile c.
\]
\end{lemma}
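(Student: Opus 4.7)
The plan is to verify the identity by a direct \v{C}ech-cochain computation, exploiting the fact that the two sequences \eqref{eq: Z/2 SES 1} and \eqref{eq: Z/2 SES 2} are \'etale-locally isomorphic but glue differently. First I would choose an \'etale cover $\{U_i \to X\}$ on which $\mu_4$ becomes constant; for each $i$, pick an isomorphism $\phi_i\co \mu_4|_{U_i} \xrightarrow{\sim} \Z/4\Z$ whose restriction to the subsheaf $\mu_2$ agrees with the canonical identification $\mu_2 \cong \Z/2\Z$. On overlaps, $\phi_i \circ \phi_j^{-1}$ is an automorphism of $\Z/4\Z$ fixing $\Z/2\Z$ pointwise, hence multiplication by some $\alpha_{ij} \in \{\pm 1\}$. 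The resulting $\{\alpha_{ij}\}$ is a \v{C}ech $1$-cocycle with values in $\mu_2 \cong \Z/2\Z$; unwinding the definition of $\beta^{(1)}$ on $1 \in H^0$ (which locally lifts to a choice $\zeta_i$ of primitive $4$th root of unity with overlap discrepancy $\zeta_j/\zeta_i \in \mu_2$) identifies its cohomology class with $\alpha$.

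Next, let $\wt c = \{c_I\} \in C^k(\{U_i\}; \Z/2\Z)$ be a \v{C}ech cocycle representing $c$. Lift to $a \in C^k(\{U_i\}; \Z/4\Z)$, so that $da = 2b$ with $[b]=\Sq^1(c)$. To compute $\beta^{(1)}(c)$, view $\wt c$ as valued in $\mu_2$ and lift it through $\phi_{i_0}^{-1}$ on each simplex $I=(i_0,\ldots,i_k)$: set $\wt a_I := \phi_{i_0}^{-1}(a_I) \in C^k(\{U_i\}; \mu_4)$. Expanding $(d\wt a)_J$ for $J=(i_0,\ldots,i_{k+1})$, all face terms for $j\ge 1$ use $\phi_{i_0}^{-1}$, while the face omitting $i_0$ uses $\phi_{i_1}^{-1}$. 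Since $\phi_{i_0}\phi_{i_1}^{-1}$ is multiplication by $\alpha_{i_0 i_1}$, one obtains
\[
(d\wt a)_J \;=\; \phi_{i_0}^{-1}\!\Bigl( (da)_J \,+\, (\alpha_{i_0 i_1}-1)\cdot a_{i_1\cdots i_{k+1}} \Bigr),
\]
where $(\alpha_{i_0 i_1}-1) \in 2\Z/4\Z$, so the whole bracket lies in the image of $\mu_2\hookrightarrow \mu_4$. Dividing by $2$ and reducing mod $2$ identifies the class $\beta^{(1)}(c)$ with $[b] + [\tilde\alpha \smile c]$, where $\tilde\alpha\in C^1(\{U_i\};\Z/2\Z)$ is the reduction of $\alpha_{ij}$ and $\tilde\alpha \smile c$ is computed by the Alexander-Whitney formula $(\tilde\alpha \smile c)_J = \tilde\alpha_{i_0 i_1}\cdot c_{i_1\cdots i_{k+1}}$. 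This is exactly $\Sq^1(c) + \alpha \smile c$.

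The main technical obstacle will be careful bookkeeping: matching the discrepancy term to the Alexander-Whitney cup-product formula (including signs), and checking that the chosen lifts $\wt a$ and $a$ do represent the \'etale cohomology classes in question (e.g.\ by passing to a hypercover if the nerve of $\{U_i\}$ is insufficient). A more conceptual alternative would realize both Bocksteins as Yoneda cup product with classes in $\Ext^1_{X_{\et}}(\Z/2\Z,\Z/2\Z)$; the local-to-global spectral sequence splits this group as $\Z/2\Z \oplus H^1(X;\Z/2\Z)$, the ``local'' summand acting by $\Sq^1$, so the difference of extension classes for $\Z/4\Z$ and $\mu_4$ lies in the $H^1$-summand, and testing on $1\in H^0$ pins down that difference as $\alpha$.
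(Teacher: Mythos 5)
Your argument is correct, but it takes a genuinely different route from the paper. The paper's proof is structural: it observes that $\mu_4$ is a module over the ring $\Z/4\Z$, that the reduction $\mu_4 \to \mu_2$ is compatible with $\Z/4\Z \to \Z/2\Z$ for these module structures, and hence that the two Bocksteins satisfy a Leibniz rule $\beta^{(1)}(rx) = (\Sq^1 r)\smile x + r \smile \beta^{(1)}(x)$ for $r \in H^*_{\et}(X;\Z/2\Z)$ and $x \in H^*_{\et}(X;\mu_2)$; the lemma then follows by setting $r = c$ and $x = 1$. You instead carry out the \v{C}ech computation that underlies this: trivialize $\mu_4$ on a cover, record the transition automorphisms $\alpha_{ij} \in \{\pm 1\}$ (correctly identified with the cocycle $\zeta_j/\zeta_i$ representing $\beta^{(1)}(1) = \alpha$), and track how the discrepancy between the locally-chosen lifts produces the extra Alexander--Whitney term $\tilde\alpha \smile c$. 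Your version makes the mechanism completely explicit at the cochain level and requires only the standard bookkeeping you flag (hypercovers, signs, which are harmless mod $2$); the paper's version avoids choosing any cover and generalizes immediately to products $r \smile x$ of classes in arbitrary degrees. Your closing remark about realizing both Bocksteins as Yoneda products with classes in $\Ext^1_{X_{\et}}(\Z/2\Z,\Z/2\Z)$, whose difference is forced into the $H^1(X;\Z/2\Z)$ summand because the two extensions are \'etale-locally isomorphic, is essentially the conceptual content of both proofs, though as stated it would need a justification that the local-to-global edge maps interact with the Yoneda action in the asserted way.
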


\begin{proof}
Since $\mu_4$ is a module over $\Z/4 \Z$, the cohomology $H_{\et}^*(X; \mu_4)$ is a module over $H_{\et}^*(X; \Z/4\Z)$. We similarly view $H_{\et}^*(X; \mu_2) $ as a module over $H_{\et}^*(X; \Z/2\Z)$. 

The reduction map $\mu_4 \rightarrow \mu_2$,
viewed as part of the short exact sequence \eqref{eq: Z/2 SES 2}, is compatible with the reduction map $\Z/4 \Z \rightarrow \Z/2\Z$, viewed as part of \eqref{eq: Z/2 SES 1},
for the respective module structures. Hence the induced maps on cohomology satisfy the same compatibility: the reduction map  
\[
H^*_{\et}(X; \mu_4)  \rightarrow  H^*_{\et}(X;\mu_2)
\]
is compatible as a map of modules with respect to the map of rings 
\[
H^*_{\et}(X; \Z/4 \Z)  \rightarrow  H^*_{\et}(X;\Z/2\Z).
\]
More precisely, let $x \in H^i_{\et}(X; \mu_2)$ and $r \in H^j_{\et}(X; \Z/2\Z)$,
so that $x$ is viewed as a module element and $r$ is viewed as a ring element. Then $rx \in H^{i+j}_{\et}(X; \mu_2)$, and we are saying that
\begin{equation}\label{eq: boundary_module}
\beta^{(1)} (rx) = (\Sq^1 r)  x + r \beta^{(1)}(x).
\end{equation}
This is seen immediately upon going back to the definition of the boundary map, using that the coboundary map on cochains is a derivation. 

The lemma then follows from taking $r=c$ and $x=1 \in H^0(X; \mu_2)$ in \eqref{eq: boundary_module}.

\end{proof}

\begin{remark}\label{rem: alpha}
The element $\alpha \in H^1_{\et}(X; \mu_2)$ is actually the pullback of a universal $\alpha \in H^1_{\et}(\Spec \F_q; \mu_2)$ which vanishes if and only if $q \equiv 1 \pmod{4}$. Indeed, \eqref{eq: Z/2 SES 1} and  \eqref{eq: Z/2 SES 2} are obviously the same for $q \equiv 1 \pmod{4}$. We also note for later use that $\alpha$ lifts to $H^1_{\et}(X; \Z_2(1))$, because $\beta^{(1)}$ is the reduction of the Bockstein for 
\[
0 \rightarrow \Z_2(1) \xrightarrow{2} \Z_2(1) \rightarrow \mu_2  \rightarrow 0.
\]
\end{remark}

\begin{thm}\label{thm: chern classes}
Let $X$ be a smooth variety over a finite field $\F_q$ of characteristic not $2$ and $E$ a vector bundle on $X$ of rank $r$. Let $\alpha$ be as in Lemma \ref{lemma: boundary difference}. Then we have:
\[
w(E) := 1+w_1 + w_2 + \ldots + w_{2r} =  \begin{cases} (1+\alpha) \ol{c}_{\even} + \ol{c}_{\odd} & r \text{ odd}, \\ \ol{c}_{\even} + (1+\alpha) \ol{c}_{\odd} & r \text{ even}, \end{cases}
\]
where 
\begin{align*}
c_{\even} &= 1 + c_2 + c_4 + \ldots  \in H^*_{\et}(X;\bigoplus_{i\in \Z} \Z_2(i))\\
c_{\odd} &= c_1 + c_3 + \ldots  \in H^*_{\et}(X;\bigoplus_{i\in \Z} \Z_2(i))
\end{align*}
and $\ol{c}$ means the reduction of $c$ modulo $2$. 
\end{thm}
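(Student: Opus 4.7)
\emph{Plan.} The plan is a three-step reduction: first use the splitting principle to pass to the case of a sum of line bundles, then compute $w(L)$ for a single line bundle by hand using Lemma \ref{lemma: boundary difference} and the self-intersection formula, and finally expand the resulting Whitney product using the critical fact that $\alpha^2 = 0$ in $H^2_{\et}(X;\Z/2\Z)$.

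\emph{Reduction to line bundles.} Let $f\co\mathrm{Fl}(E)\to X$ be the complete flag bundle of $E$, an iterated projective bundle. The projective bundle formula makes $f^*$ injective on $\Z/2\Z$-\'etale cohomology, and $f^*E$ admits a canonical complete flag with line-bundle subquotients $L_1,\dots,L_r$. Combining the Whitney product formula of \S\ref{SWproperties} (applied to both $w$ and, standardly, to $c$) with naturality and the injectivity of $f^*$, it suffices to prove the formula on $X':=\mathrm{Fl}(E)$ when $f^*E=\bigoplus_i L_i$ is a direct sum of line bundles.

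\emph{Line bundle computation.} For a line bundle $L$ on $X$, I claim that
\[
w(L) \;=\; 1 \,+\, \alpha \,+\, \ol{c_1(L)},
\]
with the three summands living in $H^0$, $H^1$, $H^2$ respectively (and $w_j(L) = 0$ for $j>2$ by rank). The integral cycle class $\tilde s_{X/L}\in H^2_X(L;\Z_2(1))$ reduces mod $4$ to a lift of $s_{X/L}$ through $\mu_4\to\mu_2$, so $\beta^{(1)}(s_{X/L}) = 0$. The relative version of Lemma \ref{lemma: boundary difference} (whose proof carries over using the $H^*_{\et}(X;\Z/4\Z)$-module structure on $H^*_X(L;\mu_4)$) then gives $\Sq^1 s_{X/L} = \alpha \smile s_{X/L}$, so by the module structure of the Gysin isomorphism $\phi(a) = \pi^*a \smile s_{X/L}$, we read off $w_1(L) = \alpha$. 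For $w_2(L)$, since $s_{X/L}$ lies in degree $2$ we have $\Sq^2 s_{X/L} = s_{X/L}\smile s_{X/L}$; the self-intersection identity $i^*i_*(1) = c_1(L)$ combined with the projection formula identifies this cup square with $\pi^*\ol{c_1(L)}\smile s_{X/L}$, so $w_2(L) = \ol{c_1(L)}$.

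\emph{Assembling the product.} By the Whitney product formula,
\[
w(E) \;=\; \prod_{i=1}^r (1 + \alpha + \ol{c_1(L_i)}).
\]
The decisive algebraic input is the vanishing $\alpha^2 = 0$ in $H^2_{\et}(X;\Z/2\Z)$: by Remark \ref{rem: alpha}, $\alpha$ is pulled back from $H^1_{\et}(\Spec\F_q;\mu_2)$, and $H^i_{\et}(\Spec\F_q;\mu_2)=0$ for $i \geq 2$ because $\Gal(\ol{\F_q}/\F_q)\cong\wh\Z$ has cohomological dimension $1$. Consequently $(1+\alpha)^{r-|S|} = 1 + (r-|S|)\alpha$ in characteristic $2$, and the elementary expansion (with $\ol{c_k(E)} = e_k(\ol{c_1(L_1)},\dots,\ol{c_1(L_r)})$) yields
\[
\prod_i(1+\alpha+\ol{c_1(L_i)}) \;=\; \ol c(E) \;+\; \alpha\sum_{k=0}^{r}(r-k)\,\ol c_k(E).
\]
The parity of $(r-k)$ shows that the second sum equals $\ol c_{\odd}(E)$ when $r$ is even and $\ol c_{\even}(E)$ when $r$ is odd; combining with $\ol c(E) = \ol c_{\even}(E) + \ol c_{\odd}(E)$ produces precisely the two stated formulas.

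\emph{Main obstacle.} The hardest step is the line-bundle computation: specifically, verifying that Lemma \ref{lemma: boundary difference} and the self-intersection formula $s_{X/L}\smile s_{X/L} = \pi^*\ol{c_1(L)}\smile s_{X/L}$ both work in relative \'etale cohomology with the expected twists. Once these two identities are secured, the remaining argument is a routine characteristic-class manipulation made clean by $\alpha^2=0$, which itself is ultimately a reflection of the $1$-dimensionality of $\Gal(\ol{\F_q}/\F_q)$.
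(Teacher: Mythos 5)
Your proof is correct, and it reaches the theorem by a visibly different route from the paper's, though the two share the same core. The heart of the matter in both cases is the line bundle computation $w(L) = 1 + \alpha + \ol{c_1(L)}$, which you carry out essentially as the paper does: the cycle class lifts compatibly to $\mu_4$-coefficients so $\beta^{(1)}(s_{X/L})=0$, Lemma \ref{lemma: boundary difference} then yields $w_1=\alpha$, and the identification of $s_{X/L}\smile s_{X/L}$ with $\pi^*\ol{c_1(L)}\smile s_{X/L}$ yields $w_2=\ol{c_1(L)}$. Where you diverge is the reduction to line bundles. The paper invokes Grothendieck's axiomatic uniqueness: it verifies that the claimed formula satisfies the axioms of \S \ref{SWproperties} (the Whitney check is where $(1+\alpha)^2=1$ enters there) and checks it on line bundles. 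You instead run the splitting principle explicitly --- pass to the flag bundle, use injectivity of $f^*$ from the projective bundle formula, and expand $\prod_i\bigl(1+\alpha+\ol{c_1(L_i)}\bigr)$ using $\alpha^2=0$. Your route has the merit that the parity-dependent shape of the answer is \emph{derived} rather than guessed and verified; its cost is that you must import the projective bundle formula for $\Z/2\Z$-\'{e}tale cohomology over the non-closed field $\F_q$ (standard, but an input the paper's argument does not explicitly need, since the citation of Grothendieck packages the splitting principle for you), and you need the Whitney formula for iterated extensions rather than direct sums, which the lemma following \S \ref{SWproperties} does supply. Your supporting claims check out: $\alpha^2=0$ because $\alpha$ is pulled back from $H^1_{\et}(\Spec \F_q;\mu_2)$ and $\wh{\Z}$ has cohomological dimension $1$ for torsion coefficients, and the coefficient $\sum_k (r-k)\,\ol{c}_k(E)$ reduces mod $2$ to $\ol{c}_{\even}$ or $\ol{c}_{\odd}$ according to the parity of $r$, reproducing both cases of the stated formula.
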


\begin{proof}
Grothendieck showed \cite{Gro58} that the definition of all characteristic classes can be obtained from the axioms in \S \ref{SWproperties} plus the definition of the characteristic classes for arbitrary line bundles. Therefore, it suffices to check that the formula above satisfies the properties in \S \ref{SWproperties} and is correct for all line bundles. 

The fact that it satisfies axiom (1) of \S \ref{SWproperties} is evident from the definition. The fact that it satisfies (2) is immediate from the observation that the Chern classes satisfy the Whitney sum and naturality property. The fact that it satisfies (3) also follows from the analogous property of Chern classes plus a case analysis of the formula. For example, when summing two bundles $\Cal{E}$ and $\Cal{E}'$ of odd rank with Chern classes $c$ and $c'$, the product of the classes claimed in the formula is 
\begin{align*}
((1+\alpha)\ol{c}_{\even} + \ol{c}_{\odd}) ((1+\alpha)\ol{c'}_{\even}+ \ol{c'}_{\odd}) &= (\ol{c}_{\even} \ol{c'}_{\even} + \ol{c}_{\odd}  \ol{c'}_{\odd} ) \\
& \qquad  + (1+\alpha) (\ol{c}_{\even} \ol{c'}_{\odd} + \ol{c}_{\odd} \ol{c'}_{\even})
\end{align*}
because $(1+\alpha)^2 = 1$, and the Whitney sum formula for Chern classes implies that the right hand side is indeed $c_{\even}(\Cal{E} \oplus \Cal{E}') + (1+\alpha) c_{\odd}(\Cal{E} \oplus \Cal{E}') $.

Finally we must check the formula for line bundles. What makes this possible is that we only have to verify the formula for $w_1$ and $w_2$, since the higher Stiefel-Whitney classes vanish for degree reasons. Thus we only need to compute $\Sq^1$ and $\Sq^2$, and we have ``explicit'' descriptions of the Steenrod operations on degree $2$ elements in these cases (\S \ref{subsec: classical Steenrod}).

Let $Y$ be the total space of a line bundle $\Cal{L}$ on $X$. We view $X$ as embedded in $Y$ via the zero section, and identify their \'{e}tale cohomology groups via pullback for the projection map $\pi \co Y \rightarrow X$. \\

\noindent \textbf{Calculation of $w_1$.} Recall from \eqref{eq: second defn w_i} that $w_1$ is defined by
\[
\Sq^1 s_{X/Y} = w_1 \smile s_{X/Y}.
\]
But the cycle class $s_{X/Y}$ lifts compatibly to $H_X^{2}(Y; \mu_{2^j})$ for all $j$, hence even to $H_Y^{2}(X; \Z_{2}(1))$ (cf. \cite[\S II.2]{FK88}). In particular $s_{X/Y}$ lies in the image of the reduction map $H^i_X(Y; \mu_4 ) \rightarrow H^i_X(Y; \mu_2)$.  The long exact sequence for \eqref{eq: Z/2 SES 2} then shows that $\beta^{(1)}(s_{X/Y}) = 0$. We are really interested in the \emph{other} boundary map $\Sq^1$, but Lemma \ref{lemma: boundary difference} tells us the difference between them:
\[
\Sq^1 (s_{X/Y})= \beta^{(1)} (s_{X/Y} )+ \alpha \smile s_{X/Y} = \alpha \smile s_{X/Y}.
\]
Hence $\alpha = w_1$, as required. \\

\noindent \textbf{Calculation of $w_2$.} The argument is essentially the same as in the proof of \cite[Lemma 2.6]{Urabe96}. Again, \eqref{eq: second defn w_i} tells us that
\[
\Sq^2 s_{X/Y} = w_2 \smile s_{X/Y} \in H^4_X(Y; \Z/2\Z). 
\]
Since $s_{X/Y} \in H^2_X(Y; \Z/2\Z)$ we have that $\Sq^2s_{X/Y} = s_{X/Y} \smile s_{X/Y}$ (using one of the explicit ``special cases'' from \S \ref{subsec: classical Steenrod}).

We now need to recall a property of the cycle class, which is a special case of a more general discussion to come in \S \ref{step2}. If $X \hookrightarrow Y$ is a codimension 1 closed embedding of smooth varieties, then we have a cycle class $\mrm{cl}_Y(X) \in H^2(Y; \mu_2)$ which is the image of the line bundle $\Cal{O}_Y(X)$ under the Chern class map $H^1(Y; \G_m) \rightarrow H^2(Y; \mu_2)$. This class $\mrm{cl}_Y(X)$ also coincides with the image of $s_{X/Y}$ under the natural map $H_X^*(Y) \rightarrow H_{\et}^*(Y)$. (A reference is \cite[Proposition II.2.2 and Proposition II.2.6]{FK88}\footnote{The book \cite{FK88} makes a blanket assumption that the ground field is separably closed, but the proofs of these particular facts don't require this assumption.}.)

Consider the commutative diagram 
\[
\xymatrix{
\Pic(X)  = H^1_{\et}(X; \G_m) \ar[r] \ar[d]_{\pi^*} & H^2_{\et}(X; \Z/2\Z) \ar[d]_{\pi^*} \\
\Pic(Y) = H^1_{\et}(Y; \G_m) \ar[r] & H^2_{\et}(Y; \Z/2\Z)
}
\]
An elementary calculation shows that the line bundle $\Cal{L}$ on $X$ whose total space is $Y$ pulls back to $\Cal{O}_Y(X)$ on $Y$, i.e. the line bundle associated to the divisor of the zero-section in $Y$. The upshot is that in $H^4_X(Y; \Z/2\Z)$, we have 
\begin{align*}
\ol{c_1}(\Cal{O}_Y(X)) \smile s_{X/Y} & =\mrm{cl}_Y(X) \smile s_{X/Y} = s_{X/Y} \smile s_{X/Y} = \Sq^2 (s_{X/Y} ),
\end{align*}
which shows that $w_2 = \ol{c_1}(\Cal{O}_Y(X)) \in H^2_{\et}(Y; \Z/2\Z)$. Since we have already established that $\Cal{L}$ pulls back to $\Cal{O}_Y(X)$ under the projection map $\pi \colon Y \rightarrow X$, naturality for Chern classes and the fact that $\pi^*$ induces an isomorphism on cohomology shows that $w_2 = \ol{c_1}(\Cal{L}) \in H_{\et}^2(X; \Z/2\Z)$. 
\end{proof}

\section{A Wu Theorem for \'{e}tale cohomology}\label{sec: Wu}

Now we relate the Stiefel-Whitney classes just constructed in \S\ref{sec: SW} with Steenrod operations. \emph{In this section it is understood that all cohomology is with $\Z/2\Z$-coefficients}, so we may suppress it from our notation.

\subsection{Wu's Theorem for smooth manifolds}

We first explain the classical version of Wu's theorem. Let $M$ be a closed smooth manifold of dimension $n$, so that the cup product induces a perfect duality on $H^*(M)$. Then for a cohomology class $x \in H^{n-i}(M; \Z/2\Z)$ the map $x \mapsto \Sq^i x \in H^n(M; \Z/2\Z)$ must, by Poincar\'{e} duality, be represented by a class $v_i \in H^i(M; \Z/2\Z)$, i.e.
\[
v_i \smile x  = \Sq^i x \text{ for \emph{all} }  x \in H^i(M; \Z/2\Z).
\]
This $v_i$ is called the $i$th \emph{Wu class}. 

Let $v := \sum_i v_i$ be the total Wu class and $w := \sum w_i$ be the total Stiefel-Whitney class of $TM$. Then Wu's formula relates the two in the following way:

\begin{thm}[Wu]
We have $w = \Sq v$. 
\end{thm}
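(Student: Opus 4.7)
My plan is to derive $w = \Sq v$ by applying the defining relation of the Wu class on the product $M \times M$ to a class built from the diagonal cycle class, and then invoking the non-degeneracy of Poincar\'e duality.

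First I would set up the diagonal $\Delta \colon M \hookrightarrow M \times M$, whose normal bundle is $TM$. Let $\bar U \in H^n(M \times M)$ denote the image of the cycle class $s_{M/M \times M}$ under the forgetful map from cohomology with supports; this is the Poincar\'e dual of the diagonal. The projection formula gives $\pi_1^*(a) \smile \bar U = \Delta_!(a)$ for $a \in H^*(M)$, and in particular $\int_{M \times M} \pi_1^*(a) \smile \bar U = \int_M a$. Since Steenrod operations are natural with respect to the forgetful map $H^*_M(M \times M) \to H^*(M \times M)$, the defining equation $\Sq(s_{M/M \times M}) = w \smile s_{M/M\times M}$ of the Stiefel-Whitney class of $TM$ descends to the identity $\Sq(\bar U) = \pi_1^*(w) \smile \bar U$ in $H^*(M \times M)$.

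Next I would observe that the Wu class of a product is $v_{M \times M} = \pi_1^*(v) \smile \pi_2^*(v)$, a formal consequence of the K\"unneth formula, the defining property of $v$, and the Cartan formula. The key computation is to apply Wu's defining relation on $M \times M$ to the class $z := \pi_1^*(x) \smile \bar U$, for arbitrary $x \in H^*(M)$:
\[
\int_{M \times M} v_{M \times M} \smile z \;=\; \int_{M \times M} \Sq(z).
\]
Simplifying the left side via the projection formula yields $\int_M v^2 x$, while simplifying the right side via the Cartan formula together with $\Sq(\bar U) = \pi_1^*(w) \smile \bar U$ yields $\int_M \Sq(x) \smile w$. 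Applying the defining relation of $v$ to $vx$ and the Cartan formula once more gives
\[
\int_M \Sq(x) \smile w \;=\; \int_M v^2 x \;=\; \int_M v \smile (vx) \;=\; \int_M \Sq(vx) \;=\; \int_M \Sq(v) \smile \Sq(x).
\]

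Finally, since $\Sq = 1 + \Sq^{\geq 1}$ is the identity plus a strictly degree-raising, hence nilpotent, endomorphism of the finite-dimensional graded vector space $H^*(M)$, it is a bijection. Thus every $y \in H^*(M)$ has the form $\Sq(x)$ for some $x$, so the preceding identity becomes $\int_M y \smile w = \int_M \Sq(v) \smile y$ for all $y$, and Poincar\'e duality forces $w = \Sq(v)$. The step requiring the most care will be the transfer from cohomology with supports to ordinary cohomology: one must verify that the $H^*(M)$-module structure on $H^*_M(M\times M)$ is compatible both with the forgetful map and with pullback along $\pi_1$ (equivalently, that $\pi_1^*(w)$ and $\pi_2^*(w)$ act identically on any class supported along the diagonal). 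This is formal from the construction of the cycle class and the naturality of the cup-$i$ products, but is the one place where the abstract manipulations cannot be suppressed.
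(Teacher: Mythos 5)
Your proof is correct. It is worth noting that the paper does not actually prove this classical statement (it is cited as known); the closest comparison is the paper's proof of its \'etale analogue, Theorem \ref{Wu theorem finite field}, which is described there as a translation of the standard topological argument. That argument shares your essential input $\Sq(\bar U) = \pi_1^*(w)\smile \bar U$ for the diagonal class, but then proceeds by pushing $\Sq(\bar U)$ forward along the first projection and evaluating the result in two ways: the projection formula gives $w$, while the dual-basis (K\"unneth) expansion $\bar U = \sum_i e_i \times f_i$ together with the Cartan formula and the defining property of $v$ gives $\Sq v$. Your endgame is genuinely different: rather than expanding the diagonal class in a dual basis and pushing forward, you establish multiplicativity of Wu classes under products, pair the Wu relation on $M\times M$ against the test classes $\pi_1^*(x)\smile\bar U$, and conclude from the invertibility of the total square $\Sq$ plus Poincar\'e duality. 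What this buys you is that you never need to commute $\Sq$ past a pushforward --- precisely the step (Proposition \ref{commute}) that is hardest in the \'etale setting; what it costs is that you still invoke the K\"unneth theorem (to identify $v_{M\times M} = \pi_1^* v \smile \pi_2^* v$), so your route would not transport to varieties over $\F_q$ any more readily than the standard one. Finally, the point you flag at the end --- transferring the defining relation $\Sq(s_{M/TM}) = \pi^*(w)\smile s_{M/TM}$ from the Thom space of $TM$ to the class supported on the diagonal of $M\times M$ --- is handled for smooth manifolds by the tubular neighborhood theorem and excision, and is exactly the step that requires real work (deformation to the normal cone, Step 1 of \S\ref{WuProof}) in the algebraic setting.
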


\begin{remark}
Note that $\Sq$ is invertible, so Wu's Theorem completely describes $v$ in terms of $w$. 
\end{remark}

\begin{example}\label{ex: wu classes}
We use Wu's Theorem to calculate a few small examples. Equating terms of degree $1$, we deduce that 
\[
v_1 =  w_1.
\]
Equating terms of degree $2$, we deduce that $v_2 + \Sq^1 v_1 = w_2$, which we can rewrite as 
\[
v_2 = w_2 + w_1^2.
\] 
\end{example}

\subsection{A Wu Theorem for varieties over finite fields}

The aim of this section is to prove a version of Wu's Theorem in the setting of \'{e}tale cohomology. For varieties over \emph{separably closed} fields, this is done in \cite[Theorem 0.5]{Urabe96}. In that case one can more or less transpose the usual proof for manifolds, essentially because the $\ell$-adic cohomology of smooth varieties over separably closed fields behaves very similarly to the singular cohomology of complex manifolds. In particular, for a surface that lifts to characteristic $0$, the classical version of Wu's Theorem implies the version for geometric $\ell$-adic cohomology. The main result of this section (Theorem \ref{Wu theorem finite field}) is that the same formula also holds for \emph{absolute} \'{e}tale cohomology over \emph{finite} fields, with our definitions of the $w_i$ from \S \ref{sec: SW}. Because the ground field is not separably closed there are some significant new difficulties; one indication of this is that the proof requires \'{e}tale homotopy theory.

\begin{remark}\label{rem: philosophy} The author has come to think about this philosophically as follows. A major defect in the analogy between varieties over $\F_q$ and topological spaces fibered over $S^1$ is that in the latter situation one can forget the fibration and consider the bare topological space, while there is no corresponding move for varieties over $\F_q$. Thus any operation performed in the category of varieties over $\F_q$ is really a ``relative'' operation: the product of varieties over $\F_q$ corresponds to the \emph{fibered} product of manifolds over $S^1$, the tangent bundle of a variety over $\F_q$ corresponds to the \emph{relative} tangent bundle over $S^1$, etc. Because of this, there are some steps in the proof of Wu's Theorem that have no analogue in the category of varieties over $\F_q$. However, passing to \'{e}tale homotopy type allows one to disassociate a variety from this fibration, and thus acquire some of the additional flexibility enjoyed by topological spaces. 
\end{remark} 

\begin{thm}\label{Wu theorem finite field} Let $X$ be a smooth, proper, geometrically connected variety over $\F_q$. Define the Wu class $v \in H^*_{\et}(X; \Z/2\Z)$ to be the unique cohomology class such that 
\[
\int \Sq x = \int v \smile x \text{  for all $x \in H^*_{\et}(X; \Z/2\Z)$}.
\]
 Then we have $w = \Sq v$.
\end{thm}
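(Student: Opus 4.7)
My plan is to run the classical topological proof of Wu's theorem, carefully adapted to absolute \'etale cohomology over $\F_q$. The key geometric input is the diagonal embedding $\Delta \co X \hookrightarrow X \times_{\F_q} X$, a codimension-$n$ closed embedding (where $n = \dim X$) whose normal bundle is $TX$. By the defining property of \'etale Stiefel-Whitney classes \eqref{eq: second defn w_i}, extended from vector bundles to general smooth closed embeddings via deformation to the normal cone, the cycle class $s = s_{X/X \times X} \in H^{2n}_X(X \times X; \Z/2\Z)$ satisfies $\Sq(s) = w \smile s$ in $H_X^*(X \times X; \Z/2\Z)$.

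Forgetting supports, write $\Delta_!(x) = \pi_1^* x \smile [s]$ for the Gysin pushforward. Naturality of $\Sq$ along the forgetful map $H_X^* \to H^*$ together with the Cartan formula yields $\Sq(\Delta_!(x)) = \Delta_!(\Sq(x) \smile w)$. Integrating and applying the Gysin projection formula gives, for all $x \in H^*_{\et}(X; \Z/2\Z)$,
\[
\int_X \Sq(x) \smile w = \int_{X \times X} \Sq(\Delta_!(x)) = \int_{X \times X} v_{X \times X} \smile \Delta_!(x) = \int_X \Delta^*(v_{X \times X}) \smile x,
\]
where the second equality uses the defining property of the Wu class $v_{X \times X}$ of $X \times X$. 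I then aim to identify $\Delta^*(v_{X \times X}) = v^2$; granting this, the desired equality $w = \Sq(v)$ follows formally: by Poincar\'e duality it suffices to show $\int w \smile y = \int \Sq(v) \smile y$ for all $y$, and setting $x = \Sq^{-1}(y)$ (valid since $\Sq$ is unipotent) shows both sides equal $\int v^2 \smile \Sq^{-1}(y)$ by combining the Cartan formula with the defining property of $v$ (applied to $v \smile \Sq^{-1}(y)$).

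The main obstacle is the identification $\Delta^*(v_{X \times X}) = v^2$, which would naturally follow from a product formula $v_{X \times X} = \pi_1^* v \smile \pi_2^* v$. Over a separably closed field this is a standard consequence of K\"unneth (and underlies Urabe's theorem \cite{Urabe96}), but over $\F_q$ the absolute \'etale cohomology of $X \times_{\F_q} X$ is a derived tensor product over $H^*_{\et}(\Spec \F_q; \Z/2\Z)$, and the trace $\int_{X \times X}$ factors through the cup product on Galois cohomology of $\F_q$. The framework of relative \'etale homotopy theory of Harpaz--Schlank \cite{HS} and Barnea--Schlank \cite{BS}, treating $X \to \Spec \F_q$ as a pro-space fibration in the spirit of Remark \ref{rem: philosophy}, should provide the right setting to make these K\"unneth and Poincar\'e duality manipulations rigorous in the absolute setting.
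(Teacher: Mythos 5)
Your outline reproduces the classical topological proof and, through the identity $\Sq\, s_{X/X\times X} = \pr_1^*(w)\smile s_{X/X\times X}$ (proved exactly as you suggest, by deformation to the normal cone) and the pushforward of the diagonal class, it parallels Steps 1--2 of the paper's argument. The formal endgame is also fine: granting $\Delta^*(v_{X\times X}) = v^2$, the unipotence of $\Sq$ and the Cartan formula do give $w = \Sq v$. But the step you flag as "the main obstacle" is not an obstacle you have overcome --- it is the entire content of the theorem, and your proposal contains no argument for it. The identity $v_{X\times_{\F_q}X} = \pi_1^*v\smile\pi_2^*v$ cannot be verified by testing against classes of the form $\pi_1^*a\smile\pi_2^*b$, because over $\F_q$ these do not span $H^*_{\et}(X\times_{\F_q}X;\Z/2\Z)$ (the Hochschild--Serre filtration has a coinvariants layer $H^{*-1}(\ol{X}\times\ol{X})_{\Frob}$ that is not hit by products of pullbacks in general). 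Saying that relative \'etale homotopy theory "should provide the right setting" is a pointer to a toolbox, not a proof; as it stands the key identification is unsupported.

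For comparison, the paper never introduces $v_{X\times X}$. It writes $w = (\pr_1)_*\Sq\,\mrm{cl}_{X\times X}(X)$, factors $(\pr_1)_* = (p_1')_*\varphi_*$ through the correspondence map $\varphi_*\co H^*_{\et}(X\times X)\to H^*_{\et}(X)\otimes H^*_{\et}(X)$, computes $\varphi_*\Delta = \sum_i e_i\otimes f_i$ from the fact that the diagonal acts as the identity correspondence (a statement needing no K\"unneth decomposition), and isolates the genuine difficulty in Proposition \ref{commute}: $\Sq\,\varphi_* = \varphi_*\Sq$. That proposition is proved by realizing $\varphi_*$, via the Harpaz--Schlank/Barnea--Schlank relative \'etale homotopy type, as the cohomological pushforward of a homotopy quotient by $\Z$ (after comparing $h\Z$ with $h\wh{\Z}$), where commutation with $\Sq$ follows from the mapping-torus/suspension description. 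Your missing product formula for $v_{X\times X}$ would in fact follow from Proposition \ref{commute} together with the projection formula for $\varphi_*$, so the two routes are ultimately equivalent --- but the hard step is exactly the one your proposal defers, so the argument as written has a genuine gap.
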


The reader is recommended to skip the proof on the first pass through the paper, as it is quite lengthy and nothing but the statement of Theorem \ref{Wu theorem finite field} will be used in the sequel.

\subsection{Proof of Theorem \ref{Wu theorem finite field}}\label{WuProof}

Our proof of Theorem \ref{Wu theorem finite field} proceeds in four steps. Steps 2 and 3 are essentially a translation of the usual (topological) proof to algebraic geometry. Step 1 performs a technical reduction to the case where the topological argument begins, and is necessary because of the lack of ``tubular neighborhoods'' in algebraic geometry. Finally, Step 4 bridges a new technical difficulty, the spirit of which is discussed in Remark \ref{rem: philosophy}, that arises here because our ground field is not separably closed.

\subsubsection{Step 1} Recall from \eqref{eq: second defn w_i} that we defined the Stiefel-Whitney classes $w_i$ by 
\[
\Sq^i s_{X/TX} =  \pi^*(w_i) \smile s_{X/TX}
\]
where $\pi \co TX \rightarrow X$ is the projection. Recall also that the normal bundle of $X$ in $X \times X$ is isomorphic to $TX$. The purpose of this step is to prove the following Lemma, which is motivated by the preceding considerations.

\begin{lemma}\label{lem: step 1}
Let $s_{X/X\times X} \in H_X^{*+2n}(X\times X)$  be the image of $1 \in H^0_{\et}(X)$ under the Gysin isomorphism 
\[
H_{\et}^*(X) \xrightarrow{\sim} H_X^{*+2n}(X\times X).
\]
Then we have 
\begin{equation}\label{WuProof1}
\Sq^i s_{X/X\times X} =  \mrm{pr}_1^*(w_i) \smile s_{X/X\times X}.
\end{equation}
where $\mrm{pr}_1 \co X \times X\rightarrow X$ denotes projection to the first factor.
\end{lemma}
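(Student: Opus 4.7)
The idea is to use deformation to the normal bundle to relate the cycle class of the diagonal $\Delta \co X \hookrightarrow X \times X$, whose normal bundle is canonically $TX$, to that of the zero section $X \hookrightarrow TX$. A useful preliminary observation is that since the Gysin map $\phi$ is a module map over the cup product (acting on the source via pullback along the closed embedding), we have
\[
\mrm{pr}_1^*(w_i) \smile s_{X/X \times X} = \phi_{\Delta}(\Delta^*\mrm{pr}_1^*(w_i)) = \phi_{\Delta}(w_i)
\]
(using $\mrm{pr}_1 \circ \Delta = \mathrm{id}_X$), and similarly $\pi^*(w_i) \smile s_{X/TX} = \phi_{0}(w_i)$ for the Gysin $\phi_{0}$ of the zero section. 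So by the very definition of $w_i$ in \eqref{eq: second defn w_i}, the problem reduces to showing that the identity $\Sq^i s_{X/TX} = \phi_{0}(w_i)$ transfers to the identity $\Sq^i s_{X/X \times X} = \phi_{\Delta}(w_i)$.

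To carry out this transfer, I would invoke the Grothendieck--MacPherson deformation to the normal bundle: a smooth $\A^1$-scheme $M$ containing $X \times \A^1$ as a closed subscheme, whose fiber over $1 \in \A^1$ recovers $(X \times X, \Delta X)$ and whose fiber over $0$ recovers $(TX, X)$. Writing $i_0, i_1$ for the inclusions of these fibers, the restriction maps
\[
i_1^* \co H^*_{X \times \A^1}(M;\Z/2\Z) \to H^*_X(X \times X;\Z/2\Z), \qquad i_0^* \co H^*_{X \times \A^1}(M;\Z/2\Z) \to H^*_X(TX;\Z/2\Z)
\]
are both isomorphisms by the $\A^1$-homotopy invariance of \'etale cohomology (applied to cohomology with supports via the long exact sequence of the open-closed decomposition). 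Under these isomorphisms the cycle classes correspond, $i_t^*(s_{X \times \A^1/M}) = s_{X/\text{fiber}_t}$, by the compatibility of cycle classes in a family of regular embeddings; and the Steenrod squares commute with $i_0^*$ and $i_1^*$ by naturality, transported to \'etale cohomology via the functoriality of the \'etale topological type (cf.\ \S\ref{subsec: etale homotopy} and \S\ref{subsec: steenrod on relative coh}).

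Putting everything together, the identity $\Sq^i s_{X/TX} = \phi_{0}(w_i)$ lifts uniquely through $i_0^*$ to an identity $\Sq^i s_{X \times \A^1/M} = \phi_M(\mrm{pr}^* w_i)$ in $H^*_{X \times \A^1}(M;\Z/2\Z)$, which then restricts via $i_1^*$ to yield the desired identity $\Sq^i s_{X/X \times X} = \phi_{\Delta}(w_i)$, proving \eqref{WuProof1}. I expect the main obstacle to be verifying the $\A^1$-invariance of cohomology with supports on the deformation space $M$, which --- unlike the tangent bundle itself --- is \emph{not} an $\A^1$-bundle over $X$, being built from a blowup; so one has to do some honest work with proper base change and excision. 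The specialization compatibility of the cycle class is closely intertwined with this and also requires care. Together, these technicalities encode the content of ``tubular neighborhoods'' that is missing from algebraic geometry and is precisely the obstacle that Step 1 is meant to overcome.
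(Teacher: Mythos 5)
Your proposal is correct and follows essentially the same route as the paper: deformation to the normal cone, compatibility of the cycle classes under restriction to the fibers over $0$ and $1$, and the fact that these restrictions are pullbacks (hence commute with $\Sq^i$) and are module maps over $H^*_{\et}(X) \cong H^*_{\et}(X \times \A^1)$. The obstacle you flag at the end dissolves in the paper's treatment: one does not need $\A^1$-invariance of $H^*_{X \times \A^1}(M)$ directly, since the Gysin (purity) isomorphisms for all three closed embeddings, together with compatibility of cycle classes under the transverse base changes $i_0, i_1$, already force the restriction maps on cohomology with supports to be isomorphisms.
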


From the definitions Lemma \ref{lem: step 1} is an immediate consequence of the following Lemma. 

\begin{lemma}
Let $X \hookrightarrow Y$ be a codimension $n$ closed embedding of smooth varieties (over any field) and let
\begin{align*}
\phi_1 \colon H^*(X) & \xrightarrow{\sim} H^{*+2n}_X(N_{X/Y})
\phi_2 \colon H^*(X) & \xrightarrow{\sim} H^{*+2n}_X(Y) \\
\end{align*}
be the two Gysin isomorphisms. Then 
 \[
w(N_{X/Y}) := \phi_1^{-1} (\Sq s_{X/N_{X/Y}}) = \phi_2^{-1} (\Sq s_{X/Y}).
\]
\end{lemma}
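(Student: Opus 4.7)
The plan is to interpolate between the two embeddings via the deformation to the normal bundle, leveraging $\A^1$-homotopy invariance of \'{e}tale cohomology together with the naturality of Gysin maps, cycle classes, and Steenrod squares. Morally this mimics the classical topological argument using a tubular neighborhood, with the deformation to the normal bundle playing the role of the tubular neighborhood (and, as discussed in Remark~\ref{rem: philosophy}, providing the ``extra flexibility'' normally unavailable in algebraic geometry).

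First I would introduce the deformation space $M := M_{X/Y}$, defined as the complement of the proper transform of $Y \times \{0\}$ inside $\mrm{Bl}_{X \times \{0\}}(Y \times \A^1)$. Then $M$ is smooth and carries a flat morphism $\pi \co M \to \A^1$ together with a codimension-$n$ closed embedding $\wt{X} := X \times \A^1 \hookrightarrow M$. The fiber of $\pi$ over $1 \in \A^1$ is canonically identified with $Y$, under which $\wt{X} \cap M_1$ becomes $X \hookrightarrow Y$; the fiber over $0$ is canonically identified with $N_{X/Y}$, under which $\wt{X} \cap M_0$ becomes the zero section $X \hookrightarrow N_{X/Y}$.

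Next, let $s := s_{\wt{X}/M} \in H^{2n}_{\wt{X}}(M; \Z/2\Z)$ be the cycle class, and let $\Phi \co H^*(\wt{X}) \xrightarrow{\sim} H^{*+2n}_{\wt{X}}(M)$ be the corresponding Gysin isomorphism. Set $W := \Phi^{-1}(\Sq s) \in H^*(\wt{X}; \Z/2\Z)$. Using the compatibility of cycle classes and Gysin maps with transverse pullback (the transverse squares in question being the pullbacks along the fiber inclusions $i_t \co M_t \hookrightarrow M$ for $t \in \{0,1\}$), combined with the naturality of Steenrod squares under pullback (which holds because they are constructed functorially on the \'{e}tale homotopy type in \S\ref{sec: steenrod}), one obtains the identities
\[
\phi_1^{-1}(\Sq s_{X/Y}) = W\big|_{X \times \{1\}}, \qquad \phi_2^{-1}(\Sq s_{X/N_{X/Y}}) = W\big|_{X \times \{0\}}.
\]
By the $\A^1$-homotopy invariance of \'{e}tale cohomology with $\Z/2\Z$-coefficients (valid since $\mrm{char}\,k \neq 2$), the two restriction maps $H^*(X \times \A^1; \Z/2\Z) \rightrightarrows H^*(X; \Z/2\Z)$ induced by $X \times \{0\}$ and $X \times \{1\}$ both invert the isomorphism $\pi^* \co H^*(X) \to H^*(X \times \A^1)$ and so agree, which forces the two right-hand sides above to coincide, as required.

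The main obstacle is bookkeeping: one must verify that the cycle class and the Gysin isomorphism for the diagram $\wt{X} \hookrightarrow M$ restrict correctly under the fiber inclusions $i_t$. This boils down to Tor-independence of the square $X \times \{t\} \hookrightarrow \wt{X}$ against $M_t \hookrightarrow M$ (immediate from flatness of $\pi$), together with the characterization of the Gysin map via its compatibility with cup product and the cycle class; both are standard in \cite{FK88} \S II.2. No genuinely new geometric input is required beyond the existence of the deformation space $M_{X/Y}$.
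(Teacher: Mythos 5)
Your proposal is correct and follows essentially the same route as the paper: both use the deformation to the normal cone $M_{X/Y} \to \A^1$ with the embedded family $X \times \A^1 \hookrightarrow M_{X/Y}$, the compatibility of cycle classes and Gysin isomorphisms with the (transverse) fiber inclusions at $t=0$ and $t=1$, the naturality of $\Sq$ under pullback, and the fact that the two restrictions $H^*(X\times\A^1) \to H^*(X)$ coincide by homotopy invariance. The only differences are cosmetic (you spell out the blow-up model of the deformation space and the Tor-independence check, where the paper cites Fulton and Deligne's ``Cycle'' expos\'{e}), and you have silently corrected the evident transposition of $s_{X/Y}$ and $s_{X/N_{X/Y}}$ in the lemma's displayed formula, which is indeed what the paper's proof establishes.
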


\begin{remark}\label{rem: excision} If $X$ were a smooth manifold, we could argue directly since we have an isomorphism 
\[
H_X^{*+2n}(N_{X/Y}) \cong H^{*+2n}(U, U \setminus X) 
\]
where $U$ is a tubular neighborhood of the zero-section in $Y$, and we have 
\[
H^{*+2n}(U, U \setminus X)  \cong H^{*+2n}_X(Y) :=  H^{*+2n}(Y, Y \setminus X)
\]
by excision. Since these isomorphisms are pullbacks induced by maps of spaces, they commute with Steenrod squares. A referee has suggested that an analogue of this argument may be carried out in our setting using the Morel-Voevodsky Purity Theorem. 
\end{remark}

\begin{proof}
The key fact is that if $X \hookrightarrow Y$ is a closed embedding, then there is a flat family deforming the inclusion $X \hookrightarrow Y$ into the zero-section $X \hookrightarrow N_{X/Y}$ (``deformation to the normal cone''). This allows us to carry out the idea of Remark \ref{rem: excision}.

More precisely, there is a flat family $\Cal{Y} \rightarrow \A^1$ which restricts to the trivial family away from the origin, $\Cal{Y}|_{\A^1-0} \cong Y \times (\A^1-0)$, but such that $\Cal{Y}|_0 \cong N_{X/Y}$. Furthermore, there is a closed embedding $X \times \A^1 \hookrightarrow \Cal{Y}$ which restricts to the given embedding $X \hookrightarrow Y$ away from $0$, and $X \hookrightarrow N_{X/Y}$ at $0$. For the construction and proofs of the properties, see \cite[\S 5]{Ful98}. The situation is depicted in the diagram below: 
\[
\begin{tikzcd}
X \ar[d, hook] \ar[r, hook] & X \times \A^1 \ar[d, hook] & X \ar[l, hook'] \ar[d,hook] \\
N_{X/Y} \ar[d] \ar[r, hook] & \Cal{Y} \ar[d]  & Y \ar[l, hook'] \ar[d] \\
0 \ar[r, hook] & \A^1 & t \ar[l, hook'] 
\end{tikzcd}
\]
Applying the Gysin morphism to $X \times \A^1 \hookrightarrow \Cal{Y}$, we have an isomorphism
\[
H^*_{\et}(X \times \A^1 ) \xrightarrow{\sim} H^{*+2n}_{X\times \A^1}(\Cal{Y})
\]
sending $1 \mapsto s_{X \times \A^1/\Cal{Y}}$. Note that $X \times \A^1$ and $Y$ (viewed as the fiber over $t$) intersect transversely in $\Cal{Y}$, and similarly  $X \times \A^1$ and $N_{X/Y}$. Hence from the diagram above we obtain a diagram of maps in cohomology (where the vertical maps are the respective Gysin isomorphisms):
\[
\xymatrix{
H^*_{\et}(X) \ar[d]_{\sim} & \ar[l]  H^*_{\et}(X \times \A^1) \ar[d]_{\sim} \ar[r]  & H^*_{\et}(X) \ar[d]_{\sim} \\
H^{*+2n}_X (N_{X/Y} ) & \ar[l]  H^{*+2n}_{X\times \A^1}(\Cal{Y})  \ar[r] & H^{*+2n}_X(Y) 
}
\]
Under this diagram the Thom classes are mapped as follows, by compatibility with base change (cf. \S 2 of Deligne's expos\'{e} ``Cycle'' in \cite{SGA4_5})
\[
\xymatrix{
1 \ar[d] & \ar[l]  1 \ar[d] \ar[r]  & 1\ar[d] \\
s_{X/N_{X/Y}}  & \ar[l]  s_{X \times \A^1/\Cal{Y}}  \ar[r] & s_{X/Y}
}
\]
Since the horizontal maps in the bottom row are pullbacks they are compatible with $\Sq$, hence send
\[
\xymatrix{
\Sq^i (s_{X/N_{X/Y}})  \ar@{=} & \ar[l]  \Sq^i(s_{X \times \A^1/\Cal{Y}})  \ar[r]  \ar@{=} & \Sq^i(s_{X/Y}).  \ar@{=} \\
}
\]
By definition $\Sq^i (s_{X/N_{X/Y}}) = \pi^*(w_i) \smile s_{X/N_{X/Y}}$, and since the maps 
\[
\xymatrix{
H^{*+2n}_X ( N_{X/Y}) & \ar[l]  H^{*+2n}_{X\times \A^1}(\Cal{Y})  \ar[r] & H^{*+2n}_X(Y) 
}
\]
are isomorphisms of $H^*_{\et}(X) \cong H^*_{\et}(X \times \A^1)$-modules, they send 
\[
\xymatrix{
\Sq^i (s_{X/N_{X/Y}})  \ar@{=}[d] & \ar[l]  \Sq^i(s_{X \times \A^1/\Cal{Y}})  \ar[r]  \ar@{=}[d]  & \Sq^i(s_{X/Y})  \ar@{=}[d] \\
\mrm{pr}_1^*(w_i) \smile s_{X/N_{X/Y}} & \ar[l] \mrm{pr}_1^*(w_i) \smile s_{X \times \A^1/\Cal{Y}}  \ar[r] & \pi^*(w_i) \smile s_{X/Y}
}
\]
as desired. 
\end{proof}

\subsubsection{Step 2}\label{step2} For a regular embedding $X \hookrightarrow Y$, there is a attached \emph{cycle class} $\mrm{cl}_Y(X) \in H^*(Y)$, which in topology would be the ``Poincar\'{e} dual to the fundamental class of $X$ in homology''. The goal of this step is to prove the following lemma.

\begin{lemma}\label{lem: step 2} Let $\Delta := \mrm{cl}_{X \times X} (X) \in H^*_{\et}(X \times X)$ be the cycle class for the diagonal embedding of $X$. Then we have $w = (\mrm{pr}_1)_* (\Sq \Delta) \in H^*_{\et}(X)$.
\end{lemma}

 We first review the definition of the pushforward in cohomology for a map of smooth proper varieties, and then the definition of the cycle class. 
 
\begin{defn}[Pushforwards in cohomology]\label{defn: pushforward variety} If $f \colon X \rightarrow Y$ is a map of smooth proper varieties over $\F_q$ of dimensions $m$ and $n$ respectively, then the pullback map 
\[
f^* \colon H^*_{\et}(Y) \rightarrow H^*_{\et}(X)
\]
induces an adjoint map on the $\Z/2\Z$-dual spaces:
\begin{equation}\label{eq: dual to pull}
(f^*)^{\vee} \co  H^*_{\et}(X)^{\vee} \rightarrow H^*_{\et}(X)^{\vee}.
\end{equation}
We can identify $H^{*}_{\et}(X)^{\vee} \cong H^{2m+1-*}_{\et}(X)$ by Poincar\'{e} duality, obtaining from \eqref{eq: dual to pull} a map 
\[
f_* \colon H^*_{\et}(X) \rightarrow H^{*+2n-2m}_{\et}(Y).
\] 
In particular, we define the \emph{cycle class of $X$ in $Y$} to be $f_*(1) =: \mrm{cl}_Y(X)$. Unwrapping the definition, the class $\mrm{cl}_Y(X)$ is characterized by the identity
\[
\int_X f^* \gamma = \int_Y \mrm{cl}_Y(X) \smile \gamma \quad \text{
for all $\gamma \in H^*(Y)$. }
\]
\end{defn}

We recall some basic properties of this pushforward. The proofs are all immediate from the definition except the last, which is \cite[Proposition 2.7]{FK88}.
\begin{itemize}
\item It is functorial. 
\item We have the \emph{product formula}
\begin{equation}\label{eq: product formula}
f_*(\alpha \smile f^* \beta) = (f_* \alpha) \smile \beta.
\end{equation}
\item If $X \hookrightarrow Y$ is a closed embedding, then the map $H^*_X(Y) \rightarrow H^*_{\et}(Y)$ sends $s_{X/Y} \mapsto \mrm{cl}_Y(X)$.
\end{itemize}

\begin{proof}[Proof of Lemma \ref{lem: step 2}] We now apply the preceding discussion to the case $Y=X \times X$, with $f$ being the diagonal embedding. By Lemma \ref{lem: step 1} we have  
\[
\Sq^i s_{X/X \times X} = \pr_1^*(w_i) \smile s_{X/X\times X} \in H^{2n+i}_X(X \times X). 
\]
Since the map $H^*_X(X \times X) \rightarrow H_{\et}^*(X \times X)$ is induced by a pullback (\S \ref{subsec: steenrod on relative coh}) it is automatically compatible with Steenrod operations, so it sends
\[
\xymatrix{
\Sq^i s_{X/X \times X} \ar[r] \ar@{=}[d] &  \Sq^i \mrm{cl}_{X \times X}(X) \ar@{=}[d] \\
\mrm{pr}_1^*(w_i) \smile s_{X/X \times X}  \ar[r] & \mrm{pr}_1^*(w_i) \smile \mrm{cl}_{X \times X}(X) 
}
\]
Hence by Definition \ref{defn: pushforward variety} and \eqref{eq: product formula} we have
\[
(\mrm{pr}_1)_* (\mrm{pr}_1^*(w_i) \smile \mrm{cl}_{X \times X}(X) ) = w_i \smile (\mrm{pr}_1)_* f_* 1 = w_i.
\]
\end{proof}

\subsubsection{Step 3}

At this point, the classical proof of Wu's theorem proceeds by computing $(\pr_1)_* \mrm{cl}_{X \times X}(X) $ in a second way which is predicated upon the K\"{u}nneth formula 
\[
H^*_{\et}(X \times X) \cong H^*_{\et}(X) \otimes H^*_{\et}(X),
\]
which unfortunately breaks down in our situation. To explain how to repair the argument, we need to make some observations. Note that $H^*_{\et}(X \times X)$ acts by correspondences on $H^*_{\et}(X)$, inducing the map 
\begin{equation}\label{correspondences}
H^*_{\et}(X \times X) \rightarrow \End  (H^*_{\et}(X))
\end{equation}
given explicitly by sending $x \in H^*_{\et}(X \times X)$ to the endomorphism
\begin{equation}\label{eq: correspondence}
\gamma \mapsto (\pr_1)_* ( x \smile \pr_2^* \gamma ).
\end{equation}

\begin{lemma}\label{diag_is_identity}
Let $\Delta := \cla_{X \times X}(X) \in H^{2 \dim X}_{\et}(X \times X)$. Then the map \eqref{correspondences} sends $\Delta \mapsto \Id$. 
\end{lemma}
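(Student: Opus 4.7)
The plan is to unwind $\Delta = f_*(1)$ (where $f \co X \hookrightarrow X \times X$ is the diagonal) and then apply the two properties of the pushforward recorded just before the lemma: the projection formula and functoriality. Given an arbitrary $\gamma \in H^*_{\et}(X)$, I want to verify directly that $(\pr_1)_*(\Delta \smile \pr_2^* \gamma) = \gamma$.

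First I would rewrite $\Delta \smile \pr_2^*\gamma$ using the product formula \eqref{eq: product formula}:
\[
\Delta \smile \pr_2^* \gamma = f_*(1) \smile \pr_2^* \gamma = f_*\!\left( 1 \smile f^* \pr_2^* \gamma \right) = f_*(f^* \pr_2^* \gamma).
\]
Since $\pr_2 \circ f = \Id_X$, this simplifies to $f_*(\gamma)$.

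Next, I would apply functoriality of the pushforward to the composition $\pr_1 \circ f = \Id_X$:
\[
(\pr_1)_*(\Delta \smile \pr_2^* \gamma) = (\pr_1)_* f_*(\gamma) = (\pr_1 \circ f)_*(\gamma) = \gamma.
\]
Comparing with \eqref{eq: correspondence} shows that $\Delta$ acts as the identity endomorphism on $H^*_{\et}(X)$.

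There is no serious obstacle; the only thing to check carefully is that the product formula and functoriality of pushforward, which were stated in Definition~\ref{defn: pushforward variety} and the ensuing bulleted list for maps between smooth proper varieties, apply to $f$ and $\pr_1$ (which they do, as $X$ and $X\times X$ are both smooth and proper). Everything else is a tautological manipulation of $\pr_2 \circ f = \pr_1 \circ f = \Id_X$.
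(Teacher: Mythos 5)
Your proof is correct and follows exactly the same route as the paper: apply the product formula to rewrite $f_*(1) \smile \pr_2^*\gamma$ as $f_*(f^*\pr_2^*\gamma)$, then use $\pr_1 \circ f = \pr_2 \circ f = \Id_X$ together with functoriality of the pushforward. No issues.
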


\begin{proof}
Let $f \co X\hookrightarrow X\times X$ denote the inclusion of the diagonal. Applying \eqref{eq: correspondence} to $x =\Delta$ yields the endomorphism
\[
\gamma \mapsto (\pr_1)_* (f_*(1) \smile p_2^* \gamma)  = (\pr_1)_* f_* (1 \smile f^* \pr_2^* \gamma) .
\]
But since $\pr_1 \circ f = \pr_2 \circ f = \Id$, this last expression is just $\gamma$ again. 
\end{proof}

The map \eqref{correspondences} can be interpreted as a ``pushforward'' in the following way. The projection morphisms $\pr_1, \pr_2 \co X \times X \rightarrow X$ induce maps $\pr_1^*, \pr_2^* \colon H^*_{\et}(X) \rightarrow H^*_{\et}(X \times X)$. From this we get a pullback map 
\[
H^*_{\et}(X) \otimes H^*_{\et}(X) \xrightarrow{\pr_1^* \smile  \pr_2^*} H^*_{\et}(X \times X).
\]
Therefore, we get a dual map in the opposite direction 
\[
H^*_{\et}(X \times X)^{\vee} \rightarrow H^*_{\et}(X)^{\vee} \otimes H^*_{\et}(X)^{\vee}.
\]
Each of these groups is canonically self-dual via Poincar\'{e} duality, so we can identify this with a map 
\begin{equation}\label{push_end}
\varphi_* \colon H^*_{\et}(X \times X) \rightarrow H^*_{\et}(X) \otimes H^*_{\et}(X).
\end{equation}
Note that this map \emph{increases the total degree} by 1. It is a straightforward exercise in unraveling the definitions to see that this map is the same as \eqref{correspondences}, once one makes the appropriate identifications.

Let $(p_1')_*$ and $(p_2')_*$ denote the ``pushforward'' maps 
\[
\begin{tikzcd}
&  \ar[dl, "(p_1')_*"']  H^*_{\et}(X) \otimes H^*_{\et}(X) \ar[dr, "(p_2')_*"]  \\
 H^*_{\et}(X)  & &   H^*_{\et}(X)
 \end{tikzcd}
\]
which are dual to the obvious ``pullbacks''
\[
\begin{tikzcd}
&  H^*_{\et}(X) \otimes H^*_{\et}(X)    \\
 H^*_{\et}(X)  \ar[ur, "(p_1')^*"]  & & H^*_{\et}(X) \ar[ul, "(p_2')^*"']  
 \end{tikzcd}
 \]

\begin{remark}\label{rem: fake variety}
The maps $(p_i')^*$ and $(p_i')_*$ are \emph{not} induced by maps of varieties; indeed $H^*_{\et}(X) \otimes H^*_{\et}(X)$ is not the cohomology of a variety over $\F_q$. However, $H^*_{\et}(X) \otimes H^*_{\et}(X)$ \emph{is} the cohomology of the pro-space $\Et(X) \times \Et(X) $. This means, for instance, that it is equipped with a natural cup product, which is just the tensor product of the cup products on $H^*_{\et}(X)$. Now, $(p_i')^*$ and $(p_i')_*$ \emph{are} induced by maps of pro-spaces, namely the obvious projection maps 
\[
p_i' \co \Et(X) \times \Et(X) \rightarrow \Et(X).
\]
This implies that $(p_i')^*$ and $(p_i')_*$ share the nice formal properties that are enjoyed by all pullbacks and pushforwards: for example, we will use that they satisfy the projection formula, and that $(p_i')^*$ commutes with Steenrod operations. 

However, it is not really necessary to use \'{e}tale homotopy theory to see all this. We can just \emph{formally define} the cup product on $H^*_{\et}(X) \otimes H^*_{\et}(X)$ to be the tensor product of the cup products on $H^*_{\et}(X)$, and \emph{formally define} $\Sq$ on $H^*_{\et}(X) \otimes H^*_{\et}(X)$ to be the tensor product of $\Sq \otimes \Sq$. It is an exercise in elementary algebra to check that this induces a well-defined action of the Steenrod algebra, satisfying  all the axioms of \S \ref{subsec: classical Steenrod}. Similarly, the projection formula for $(p_i)_*$ boils down to a tautology. 
\end{remark}

\begin{lemma}\label{Delta_basis}
Let $X$ be a smooth proper variety over a finite field. Let $\{e_i\}$ be a basis for $H^*_{\et}(X)$ and $\{f_i\}$ the dual basis under Poincar\'{e} duality. Then, letting $\Delta$ be as in Lemma \ref{diag_is_identity}, we have
\begin{equation}\label{eq: push of delta}
\varphi_* \Delta = \sum_i  e_i \otimes  f_i
\end{equation}
where $\varphi_*$ is as in \eqref{push_end}.
\end{lemma}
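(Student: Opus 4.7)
My plan is to verify the identity by pairing both sides against an arbitrary decomposable element $a \otimes b \in H^*_{\et}(X) \otimes H^*_{\et}(X)$ and invoking non-degeneracy of the Poincar\'{e} duality pairing on $H^*_{\et}(X) \otimes H^*_{\et}(X)$ (which is perfect because it is the tensor square of the Poincar\'{e} duality pairing on $H^*_{\et}(X)$).

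First I would unwind the definition of $\varphi_*$. By construction, $\varphi_*$ is the map adjoint to $\pr_1^* \smile \pr_2^*$ with respect to the Poincar\'{e} duality pairings on $H^*_{\et}(X \times X)$ and on $H^*_{\et}(X) \otimes H^*_{\et}(X)$. Hence for all $a,b \in H^*_{\et}(X)$,
\[
\langle \varphi_* \Delta, \, a \otimes b \rangle = \langle \Delta, \, \pr_1^* a \smile \pr_2^* b \rangle_{X \times X} = \int_{X \times X} \Delta \smile \pr_1^* a \smile \pr_2^* b.
\]
Next, using that $\Delta = f_*(1)$ for $f \co X \hookrightarrow X \times X$ the diagonal, together with the projection formula \eqref{eq: product formula} and the identity $\pr_i \circ f = \Id$, I would compute
\[
\int_{X \times X} f_*(1) \smile \pr_1^* a \smile \pr_2^* b = \int_X f^*(\pr_1^* a \smile \pr_2^* b) = \int_X a \smile b.
\]

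Second, I would compute the pairing $\langle \sum_i e_i \otimes f_i, \, a \otimes b \rangle$ directly. By definition of the tensored Poincar\'{e} pairing,
\[
\Big\langle \sum_i e_i \otimes f_i, \, a \otimes b \Big\rangle = \sum_i \Big(\int_X e_i \smile a\Big) \Big(\int_X f_i \smile b\Big).
\]
Expanding $a = \sum_k a_k e_k$ and $b = \sum_\ell b_\ell e_\ell$ in the basis $\{e_k\}$, and using the defining relation $\int_X e_i \smile f_j = \delta_{ij}$, the second factor collapses to $\int_X f_i \smile b = b_i$, so the sum becomes
\[
\sum_{i,k} a_k b_i \int_X e_i \smile e_k = \int_X a \smile b,
\]
matching the previous computation.

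Since $\langle \varphi_* \Delta - \sum_i e_i \otimes f_i, \, a \otimes b \rangle = 0$ for every decomposable tensor $a \otimes b$, and these span $H^*_{\et}(X) \otimes H^*_{\et}(X)$, the non-degeneracy of the pairing yields the identity \eqref{eq: push of delta}. There is no substantive obstacle: the coefficient ring is $\Z/2\Z$ so no signs arise, and both ingredients (the characterization of $\Delta$ via $f_*(1)$ and the projection formula) have been recorded in Step 2 of the proof. The only thing to double-check is that the identification of $\varphi_*$ as the adjoint of $\pr_1^* \smile \pr_2^*$ is compatible with the identification with \eqref{correspondences} used in Lemma \ref{diag_is_identity}; indeed, combined with that lemma, the present result can equivalently be read as saying that $\sum_i e_i \otimes f_i$ represents the identity endomorphism of $H^*_{\et}(X)$ under the Poincar\'{e}-duality isomorphism $H^*_{\et}(X) \otimes H^*_{\et}(X) \cong \End(H^*_{\et}(X))$, which is a standard linear-algebra fact.
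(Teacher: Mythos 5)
Your proof is correct and takes essentially the same route as the paper: the paper invokes Lemma \ref{diag_is_identity} (that $\Delta$ acts as the identity correspondence) together with dual-basis linear algebra, while you perform the same two computations directly against the Poincar\'{e} duality pairing, re-deriving the content of Lemma \ref{diag_is_identity} inline via $\Delta = f_*(1)$ and the projection formula. No gaps.
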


\begin{proof}
Lemma \ref{diag_is_identity} says that the action of $\Delta$ induced on $H^*(X)$ by \eqref{eq: correspondence} is just the identity map. Therefore, it suffices to show that the  right hand side of \eqref{eq: push of delta} acts as the identity on $H^*(X)$, but this is just a straightforward linear algebra exercise about dual bases. 
\end{proof}

Since the pullback $H^*_{\et}(X) \xrightarrow{\pr_1^*} H^*_{\et}(X \times X)$ obviously factors through 
\[
H^*_{\et}(X) \xrightarrow{(p_1')^* } H^*_{\et}(X) \otimes H^*_{\et}(X)  \xrightarrow{\pr_1^* \smile  \pr_2^*}  H^*_{\et}(X \times X)
\]
(morally, ``$\pr_1 =   \varphi \circ p_1'$'') we have 
\begin{equation}\label{eq: step 3 factor push}
(p_1)_* = (p_1')_* \varphi_*. 
\end{equation}

Now, let us summarize where we are. Combining Lemma \ref{lem: step 2} and  \eqref{eq: step 3 factor push}, we know that
\begin{equation}\label{eq: step 3 eqn2}
w = (p_1)_* \Sq \Delta = (p_1')_* \varphi_* \Sq \Delta.
\end{equation}
Lemma \ref{Delta_basis} gives us an expression for $\varphi_* \Delta$, hence also $\Sq \varphi_* \Delta$. If we could commute $\varphi_*$ and $\Sq$, then this would give us a formula for $\varphi_* \Sq \Delta$. But although Steenrod squares commute with pullbacks, they do \emph{not} in general commute with pushforwards. This is the key problem (note that the whole issue disappears when one has the K\"{u}nneth formula, as in classical algebraic topology or in algebraic geometry over separably closed fields). To address this issue, in the last step of the proof, we will establish:

\begin{prop}\label{commute}
Let $X$ be a smooth proper variety over a finite field, and $\varphi_*$ be as in \eqref{push_end}. Then we have $\Sq \varphi_* = \varphi_* \Sq$.
\end{prop}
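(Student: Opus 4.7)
The key idea is to trade the failure of K\"{u}nneth for the variety $X \times_{\F_q} X$ against an understanding of the very simple pro-space $\Et(\Spec \F_q) \simeq B\hat{\Z}$. The plan is to realize $\varphi$ as (induced from a map of pro-spaces which is) the base-change of the diagonal of $B\hat{\Z}$, and then to exploit the fact that the cohomology of $B\hat{\Z}$ is small enough that its Steenrod action is trivial.

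Concretely, I would first invoke the results of \cite{HS, BS} (building on \cite{Fr82}) on the compatibility of \'{e}tale homotopy type with fiber products to identify the map of pro-spaces $\varphi \co \Et(X \times X) \to \Et(X) \times \Et(X)$ as fitting into a homotopy pullback square
\[
\begin{tikzcd}
\Et(X \times X) \ar[r, "\varphi"] \ar[d] & \Et(X) \times \Et(X) \ar[d] \\
B\hat{\Z} \ar[r, "\Delta"] & B\hat{\Z} \times B\hat{\Z}.
\end{tikzcd}
\]
In view of this square the pushforward $\varphi_*$ should be viewed as pulled back from $\Delta_*$, in the sense of a Riemann--Roch formula
\[
\Sq \circ \varphi_* = \varphi_* \circ (v(T_{\varphi}) \smile \Sq(-))
\]
in which the ``relative Wu class'' $v(T_{\varphi})$ is itself pulled back from the relative Wu class of $\Delta$ via the vertical maps in the square. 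Such a Riemann--Roch statement for pro-spaces would follow by choosing equivariant (cochain-level) Thom representatives as in \S \ref{subsec: cup-i} and tracking cup-$i$ products under pullback; I expect this bookkeeping to be the main obstacle, since pro-space pushforwards are defined via Poincar\'{e} duality rather than geometrically.

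Granted that formula, the proof is immediate: since $H^*(B\hat{\Z}; \Z/2\Z) = \Z/2\Z[\alpha]/(\alpha^2)$ with $|\alpha|=1$, the only possibly non-trivial Steenrod operation is $\Sq^1$, and $\Sq^1(\alpha) = \alpha^2 = 0$. Thus $\Sq$ acts as the identity on $H^*(B\hat{\Z}; \Z/2\Z)$; an elementary Poincar\'{e}-duality calculation yields $\Delta_*(1) = \alpha_1 + \alpha_2$ and $\Delta_*(\alpha) = \alpha_1 \smile \alpha_2$, both manifestly fixed by $\Sq$. In particular the ``relative Wu class'' of $\Delta$ is $1$, hence so is $v(T_\varphi)$, and the Riemann--Roch formula above collapses to the desired $\Sq \circ \varphi_* = \varphi_* \circ \Sq$.

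If the Riemann--Roch route proves too abstract to make rigorous for pro-spaces, my fallback plan would be to test the identity against an arbitrary class $\beta \in H^*_{\et}(X) \otimes H^*_{\et}(X)$ using the adjunction that characterizes $\varphi_*$ (i.e.\ $\int_{X \times X} \varphi^*\beta \smile \alpha = \int_{\Et(X)\times \Et(X)} \beta \smile \varphi_* \alpha$) together with Wu's theorem (Theorem \ref{Wu theorem finite field}) applied separately on $X \times X$ and on $\Et(X) \times \Et(X)$. The relative Wu class of $\Delta$ being trivial then manifests as the compatibility of the Wu classes of $\Et(X) \times \Et(X)$ (product of two copies of $v_X$) with the Wu class of $X \times X$ (also expressible in these terms), and the identity $\Sq \varphi_* = \varphi_* \Sq$ drops out by chasing the Cartan formula.
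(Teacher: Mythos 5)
Your homotopy pullback square over $B\wh{\Z}$ is essentially the same geometric picture as the paper's diagram \eqref{eq: homotopy quotient diagram}, but the step you yourself flag as ``the main obstacle'' is precisely where the entire content of the proposition lives, and your proposal does not close it. A Riemann--Roch formula $\Sq \circ f_* = f_* \circ (v(T_f) \smile \Sq(-))$ is a theorem for maps that are realized geometrically (embeddings and projections, via Thom classes); for a pushforward \emph{defined abstractly by Poincar\'{e} duality} on pro-spaces there is no such formula available, and indeed Proposition \ref{commute} \emph{is} exactly the special case of that formula with trivial relative Wu class --- so assuming the formula begs the question. The base-change claim for the relative Wu class along the homotopy pullback square has the same status: it is not something one can extract from cochain-level cup-$i$ bookkeeping without first giving the pushforward a geometric model. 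The paper's resolution is exactly to supply that model: after using relative \'{e}tale homotopy theory to identify $\varphi_*$ with the pushforward along a homotopy quotient by $\wh{\Z}$, and comparing $h\wh{\Z}$ with $h\Z$ on cohomology (Lemma \ref{battle Z vs Z/n}), one reduces to a levelwise honest map of spaces $M \to M_{h\Z}$, whose duality pushforward is identified (via Lemma \ref{push_criterion}) with the composite of a suspension isomorphism and a pullback along the collapse map $M_{h\Z} \to S^1 \wedge (M_+)$; these commute with $\Sq$ by stability and naturality respectively (Proposition \ref{Z space version}). That geometric identification is the missing idea.

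Your fallback route is circular. Unwinding it, testing $\Sq\varphi_*\alpha$ against all $\beta$ via the adjunction and the Cartan formula reduces the proposition to the identity $\varphi^*(v_{\Et(X)} \otimes v_{\Et(X)}) = v_{X \times X}$, and the only way you propose to verify this is via $\Sq v = w$ on $X \times X$, i.e.\ Theorem \ref{Wu theorem finite field} applied to $X \times X$. But the paper's proof of Theorem \ref{Wu theorem finite field} takes Proposition \ref{commute} as an input (see Step 3 of \S \ref{WuProof}), so you would need Proposition \ref{commute} for $X \times X$ to prove it for $X$, and so on without termination. Since the Wu classes are defined purely by duality, the identity $\varphi^* v_{\Et(X)^{\times 2}} = v_{X\times X}$ is in fact \emph{equivalent} to the proposition, not a route to it.
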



\emph{Assuming Proposition \ref{commute} for now} (it will be shown in \S \ref{battlefield}), we complete the rest of the proof of Theorem \ref{Wu theorem finite field}. Let $\{e_i\}$ be a basis for $H^*_{\et}(X)$ and $\{f_i\}$ the dual basis under Poincar\'{e} duality, as above. By \eqref{eq: step 3 eqn2}, Proposition \ref{commute}, and Lemma \ref{Delta_basis} we have 
\begin{equation}\label{eq: step 3 eqn3}
w = (p_1')_* \Sq \left(\sum_i  (p_1')^* e_i \smile  (p_2')^* f_i \right).
\end{equation}
By the Cartan formula and the projection formula (which hold by Remark \ref{rem: fake variety}), we have
\begin{align}\label{eq: step 3 eqn4}
(p_1')_* \Sq \left(\sum_i  (p_1')^* e_i \smile  (p_2')^* f_i \right) &=   \sum_i (p_1')_* \left( (p_1')^* \Sq e_i \smile (p_2')^* \Sq f_i \right) \nonumber\\
&= \sum_i  \Sq e_i \smile  (p_1')_*(p_2')^* \Sq f_i \nonumber \\
&= \sum_i \Sq e_i \otimes (p_1')_* (p_2')^* \Sq f_i .
\end{align}

Now, unraveling the definitions shows that
\[
(p_1')_* (p_2')^*  \gamma =  \int \gamma  \quad \text{for all $ \gamma \in H^*_{\et}(X)$}
\]
where the right hand side is viewed in $\Z/2\Z \cong H^0_{\et}(X)$. (It is also easy to see directly that this must be the case for degree reasons, since the left side can only be non-zero for $\gamma$ in top degree.) Combining this with \eqref{eq: step 3 eqn3} and \eqref{eq: step 3 eqn4}, we find that 
\begin{align*}
w &= \sum_i \Sq e_i  \cdot \int \Sq f_i = \sum_i \Sq e_i \cdot \int v \smile f_i  = \Sq  \left(\sum_i e_i \cdot \int v \smile f_i \right) = \Sq v,
\end{align*}
with the last equality using that $\{e_i\}$ and $\{f_i\}$ are dual bases. \qed

\subsubsection{Step 4}\label{battlefield}
This step is devoted to the proof of Proposition \ref{commute}. As foreshadowed in Remark \ref{rem: fake variety}, the difficulty stems from the inability to realize $H^*_{\et}(X) \otimes H^*_{\et}(X)$ as  the cohomology of an actual variety over $\F_q$. For this reason it is useful to pass to \'{e}tale topological type, where we can interpret
\[
H^*_{\et}(X) \otimes H^*_{\et}(X) = H^*_{\et}(\Et(X)\times \Et(X)).
\]

\noindent \textbf{The idea of the argument.} The basic geometric idea is that the map 
\begin{equation}\label{battle eq 4}
\varphi_* \co  H^*_{\et}(X \times X )  \rightarrow H^*_{\et}(X) \otimes H^*_{\et}(X)
\end{equation}
looks like a pushforward map on cohomology induced by a  ``homotopy quotient by $\wh{\Z}$'' at the level of geometric objects. Proposition \ref{commute} is then motivated by the well-known fact (which we prove below in Proposition \ref{Z space version}) that Steenrod operations commute with pushforward through a homotopy quotient by $\Z$, and that cohomologically (with finite coefficients) homotopy quotients by $\Z$ and by $\wh{\Z}$ look the same.

To see why the ``basic geometric idea'' should be true, our heuristic is that for any field $k$ and $G_k := \Gal(\ol{k}/k)$, we should have 
\begin{equation}\label{hquot_hope}
``\Et(X)  = \Et(X_{\ol{k}})_{hG_k}".
\end{equation}
Here if a group $G$ acts on a space $Y$ then we write $Y_{hG} := (Y \times EG)/G$ for the ``homotopy quotient of $Y$ by $G$'', where $EG$ is some contractible space with a free $G$ action, and the quotient is for the (free) diagonal action. The heuristic \eqref{hquot_hope} then suggests that
\begin{align*}
\Et(X \times_{k} X)  &\cong \Et((X \times_{k} X)_{\ol{k}})_{hG_k}  \cong \Et(X_{\ol{k}} \times_{\ol{k}} X_{\ol{k}})_{hG_k}  \cong (\Et(X_{\ol{k}}) \times \Et(X_{\ol{k}}))_{hG_k}
\end{align*}
where the quotient is for the action of the diagonal $G_k$ (leaving a residual action of $G_k$), while 
\begin{align*}
\Et(X) \times \Et(X)  \cong \Et(X_{\ol{k}})_{hG_k} \times \Et(X_{\ol{k}})_{hG_k}  \cong  (\Et(X_{\ol{k}}) \times \Et(X_{\ol{k}}))_{h(G_k \times G_k)}.
\end{align*}
Hence we would have a homotopy quotient
\[
\Et(X \times_{k}X) \rightarrow \Et(X \times_{k}X) _{hG_k} \cong  \Et(X) \times \Et(X)
\]
whose induced pushforward on cohomology agrees recovers \eqref{battle eq 4}. 

In a previous version of this paper, we used some complicated gymnastics in cohomology as a substitute for the fact that we did not know how to rigorously formulate \eqref{hquot_hope}. We are very grateful to an anonymous referee for informing us that there already exists a framework to handle this sort of issue, namely the ``relative \'{e}tale homotopy theory'' developed in \cite{HS}, \cite{BS}. This formalism makes the argument much more efficient and transparent, so we review it next. \\

\noindent \textbf{Relative \'{e}tale homotopy theory.} If $X$ is a variety over a field $k$ and $L/k$ is a finite extension, then $\Et(X_L)$ is a pro-object in simplicial sets equipped with an action of $\Gal(L/k)$ \emph{as a pro-object}. However, it would be better to work with an object that enjoys a level-wise action, rather than an action as a pro-object. In \cite{HS} Harpaz-Schlank defined a refined variant of the \'{e}tale homotopy type denoted $\Et_{/k}(X)$, called the \emph{relative \'{e}tale homotopy type} \cite[\S 9.2.3]{HS}\footnote{Note that the ArXiv version of \cite{HS} is numbered differently from the published version.} of $X$, which is a pro-object in the homotopy category of $G_k$-spaces (which means by definition that every simplex has open stabilizer)\footnote{As in \S \ref{subsec: etale homotopy}, we will refer to simplicial sets as ``spaces'' to make the exposition smoother.}. The point is that $\Et_{/k}(X)$ is equipped with a \emph{level-wise} action of $G_{k}$. In \cite{BS} Barnea-Schlank lifted this construction to the pro-category of $G_k$-spaces, and it is this refinement that we will use in this paper. The improvement is analogous to Friedlander's refinement of the Artin-Mazur \'{e}tale homotopy type, which is a pro-object in the homotopy category of spaces, to the \'{e}tale topological type discussed in \S \ref{subsec: etale homotopy}, which is a pro-object in spaces, although the methods of \cite{BS} are very different. 

Here is a very brief summary of the difference between $\Et_{/k}(X)$ and $\Et(X)$; see \cite[\S 9.2.3]{HS} and \cite[\S 8]{BS} for the details. The basic idea is that the usual definition of $\Et(X)$ attaches to each hypercovering $U_.$ of $X$ the simplicial set $\pi_0(U_.)$ of its connected components. On the other hand, $\Et_{/k}(X)$ assigns to $U_.$ the simplicial set $\pi_0(U_. \times_k\ol{k})$, which is equipped with an obvious $G_k$-action.

\begin{defn}[{\cite[\S 9.6.2]{HS}}]\label{def: htpy quot}
Given a $G_k$-space $Y$, we define the \emph{homotopy quotient} $Y_{hG_k}$ to be the pro-space 
\[
Y_{hG_k} := \{(Y \times E(G_k/H))/G_k\}_{H}
\]
where the index set runs over $i$ and open normal subgroups $H \triangleleft G_k$. 

Given a pro-$G_k$-space $\{Y_i\}_i$, we define the \emph{homotopy quotient} $(\{Y_i\}_{i})_{hG_k}$ to be the pro-space 
\[
(\{Y_i\}_i)_{hG_k} := \{(Y_i\times E(G_k/H))/G_k\}_{i,H}.
\]
where the index set runs over open normal subgroup $H \triangleleft G_k$. 
\end{defn}

\begin{remark}\label{rem: htpy quotient cofinal}
Note that $(Y\times E(G_k/H))/G_k = (Y/H)_{h(G_k/H)}$. In particular, if $H$ acts trivially on $Y$ then $(Y\times E(G_k/H))/G_k =Y_{h(G_k/H)}$.
\end{remark}

We next begin discussing the key properties of $\Et_{/k}(X)$. Actually, we replace $\Et_{/k}(X)$ by its Postnikov truncation denoted $\Et_{/k}(X)^{\sharp}$ in \cite{HS}, which does not alter the cohomology. This is a technical device to guarantee certain finiteness conditions levelwise; for simplicity of notation we will omit the $\sharp$. 
\begin{enumerate}
\item By \cite[Proposition 9.82]{HS}, we have a homotopy equivalence
\[
\Et_{/k}(X)_{hG_k} \xrightarrow{\sim} \Et(X).
\]
\item By the cofinality of the diagonal in the product of a left filtered index category with itself, \cite[Proposition 9.82]{HS}  also implies that we have a homotopy equivalence
\[
(\Et_{/k}(X) \times \Et_{/k}(X))_{hG_k \times hG_k} \xrightarrow{\sim} \Et(X) \times \Et(X).
\]
\item Finally, \cite[Proposition 9.19]{HS} asserts that the underlying pro-space of $\Et_{/k}(X)$ (obtained by forgetting the $G_k$-action) is homotopy equivalent to $\Et(X_{\ol{k}})$. Denote this forgetful functor by $\mrm{Oblv}$.
\end{enumerate}

Now, $\Et_{/k}(X) \times \Et_{/k}(X)$ is naturally a pro-$G_k \times G_k$-space. Taking the homotopy quotient for the diagonal $G_k$-action leaves a residual $G_k$-action, making $(\Et_{/k}(X) \times \Et_{/k}(X))_{hG_k}$ a pro-$G_k$-space, so that we can take the homotopy quotient again. Putting the facts (1)-(3) together, we realize the homotopy quotient map \eqref{hquot_hope} as the vertical map between pro-spaces in the diagram below:
\begin{equation}\label{eq: homotopy quotient diagram}
\begin{tikzcd}
\mrm{Oblv} ((\Et_{/k}(X) \times \Et_{/k}(X))_{hG_k}) \ar[d, "/hG_k"]  \ar[r, "\sim" ] &  (\Et_{/k}(X \times_k X))_{hG_k}  \ar[r, "\sim"] & \Et(X \times_k X)  \\
 (\Et_{/k}(X) \times \Et_{/k}(X))_{h(G_k \times G_k)} \ar[rr, "\sim"]   & & \Et(X) \times \Et(X)
\end{tikzcd} 
\end{equation} 

\noindent \textbf{Comparing homotopy quotients by $\Z$ and $\wh{\Z}$.} Let $Y := \{Y_i\}_i$ be any pro-$\wh{\Z}$-space. We can restrict the $\wh{\Z}$-action to a $\Z$-action, and then form $Y_{h\Z}$. By Definition \ref{def: htpy quot} and Remark \ref{rem: htpy quotient cofinal}, we have $Y_{h\Z} = \{(Y_i)_{h\Z}\}_i$ while  $Y_{h\wh{\Z}} =\{(Y_i/n\Z)_{h(\Z/n\Z)}\}_{i,n}$. Thus there is a canonical map of pro-$\G_k$-spaces:
\begin{equation}\label{eq: hZ to hat}
Y_{h\Z} \rightarrow Y_{h\wh{\Z}}.
\end{equation}
Our next goal is to show that if $Y = \Et_{/k}(X)$ arises as the relative homotopy type of a variety $X/\F_q$, then \eqref{eq: hZ to hat} induces an isomorphism on cohomology. 

\begin{lemma}\label{battle Z vs Z/n} 
Let $X$ be a variety over $\F_q$. Then the natural map
\[
 H^*( \Et_{/k}(X)_{h\Z}; \Z/2\Z)  \leftarrow  H^*( \Et_{/k}(X)_{h\wh{\Z}}; \Z/2\Z)
\]
is an isomorphism.
\end{lemma}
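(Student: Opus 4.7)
The plan is to compute both sides via the Leray--Serre (equivalently Hochschild--Serre) spectral sequence attached to the relevant homotopy quotients, and to observe that the two resulting spectral sequences have canonically isomorphic $E_2$-pages because $\Z=\langle F\rangle$ and the full Galois group $\wh{\Z}$ have the same cohomology on any discrete $\wh{\Z}$-module.

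First I would write $\Et_{/k}(X)=\{Y_i\}_i$, noting that the $\wh{\Z}$-action on each $Y_i$ factors through some finite quotient $\Z/n_i\Z$. Then Definition~\ref{def: htpy quot} and Remark~\ref{rem: htpy quotient cofinal} unwind to give
\[
H^*(\Et_{/k}(X)_{h\wh{\Z}};\Z/2\Z)=\varinjlim_{i,\,n_i\mid n}H^*((Y_i)_{h(\Z/n\Z)};\Z/2\Z),
\]
\[
H^*(\Et_{/k}(X)_{h\Z};\Z/2\Z)=\varinjlim_{i}H^*((Y_i)_{h\Z};\Z/2\Z),
\]
and the map in the lemma is induced levelwise by the canonical maps $(Y_i)_{h\Z}\to(Y_i)_{h(\Z/n\Z)}$ attached to the quotient $\Z\twoheadrightarrow\Z/n\Z$.

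Next I would apply the Leray--Serre spectral sequence to each fibration $Y_i\to(Y_i)_{hG}\to BG$ for $G=\Z$ and $G=\Z/n\Z$. This yields $E_2^{p,q}=H^p(G;H^q(Y_i;\Z/2\Z))\Rightarrow H^{p+q}((Y_i)_{hG};\Z/2\Z)$, and the comparison of spectral sequences acts on $E_2$ by inflation in group cohomology. Passing to the direct limit in $n$ (using exactness of filtered colimits and the standard identification of such colimits with continuous group cohomology) turns the $\wh{\Z}$-side $E_2$-page into $H^p_{\mrm{cts}}(\wh{\Z};H^q(Y_i;\Z/2\Z))$, and the comparison map becomes restriction along the inclusion $\Z\hookrightarrow\wh{\Z}$ sending $1\mapsto F$.

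The key group-theoretic input is that this restriction is an isomorphism on any discrete $\wh{\Z}$-module $M$: both groups have cohomological dimension $1$ (for $\wh{\Z}$ in the continuous sense on torsion discrete modules), so only $p\in\{0,1\}$ contribute; in degree $0$ both compute $M^F$ and in degree $1$ both compute $M/(F-1)M$. Therefore the map of $E_2$-pages is an isomorphism, and so is the map of abutments; colimiting over $i$ concludes the proof. The hard part will be the bookkeeping for the pro-structure: verifying the double-colimit formula for $H^*(\Et_{/k}(X)_{h\wh{\Z}};\Z/2\Z)$ within the Barnea--Schlank framework, and checking that the Leray--Serre spectral sequence is available at each fixed finite stage---though the latter reduces, after fixing the level $(Y_i,\Z/n\Z)$, to the classical case of a simplicial set with a finite group action.
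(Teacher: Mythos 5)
Your plan is correct and follows essentially the same route as the paper: reduce to a single level $Y_i$ using excellence of the pro-$\wh{\Z}$-space (so the action factors through a finite quotient), compare the spectral sequences $H^r(G;H^s(Y_i))\Rightarrow H^{r+s}((Y_i)_{hG})$ for $G=\Z$ and $G=\Z/N\Z$, and pass to the colimit over $N$ where the $E_2$-comparison becomes the standard isomorphism $H^*(\wh{\Z};M)\cong H^*(\Z;M)$ for finite discrete modules. The paper's proof is exactly this argument, so no further comparison is needed.
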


\begin{proof} 
We consider the pro $G_k$-space $Y = (Y_i) := \Et_{/k}(X)$. The map on cohomology induced by \eqref{eq: hZ to hat} is 
\[
\varinjlim_i H^*((Y_i)_{h\Z} ) \leftarrow \varinjlim_i \varinjlim_n H^*((Y_i)_{h(\Z/n\Z)}).
\]
Therefore it will certainly suffice to prove that 
\begin{equation}\label{eq: isom in limit}
H^*((Y_i)_{h\Z} ) \xleftarrow{\sim} \varinjlim_n H^*((Y_i/n\Z)_{h(\Z/n\Z)})
\end{equation}
individually for each $i$. Moreover, since we assume that $Y = \Et_{/k}(X)$ we can take $Y_i$ to be \emph{excellent}, i.e. we can assume that the $\wh{\Z}$-action on $Y_i$ already factors through a finite quotient \cite[\S 9.2.3, p.296-297]{HS}. Hence, by restricting to a cofinal subcategory of the indexing category, and renaming $Y_i$ to $Y$, we may assume that $Y$ is a $\wh{\Z}$-space (as opposed to pro-space) on which the action factors through $\Z/n\Z$. Then, obviously, for any $n \mid N$ we have  
\[
(Y/N\Z)_{h(\Z/N\Z)}  = Y_{h(\Z/N\Z)}.
\]

The identity map $Y \rightarrow Y$ is obviously equivariant for the group homomorphism $\Z \rightarrow \Z/N\Z$, and induces the map of spectral sequences
\begin{equation}
\xymatrix{
H^r(\Z; H^s(Y)) \ar@{=>}[r]  & H^{r+s}(Y_{h\Z}) \\
H^r(\Z/N\Z; H^s(Y)) \ar@{=>}[r] \ar[u] & H^{r+s}(Y_{h(\Z/N\Z)}) \ar[u] \\
}
\end{equation}
This map of spectral sequences is not necessarily an isomorphism for any fixed $N$, but after taking the direct limit as $N$ runs over all positive multiples of $n$, it becomes an isomorphism by the well-known comparison of group cohomology for $\wh{\Z}$ and $\Z$ with finite coefficients:
\begin{equation}\label{eq: big ss}
\begin{tikzcd}
H^r(\Z; H^s(Y)) \ar[r, equals] & H^r(\Z; H^s(Y)) \ar[r, Rightarrow]  & H^{r+s}(Y_{h\Z}) \\
H^r(\wh{\Z}; H^s(Y)) \ar[r, equals] \ar[u, "\sim"] & \varinjlim_{n \mid N} H^r(\Z/N\Z; H^s(Y)) \ar[r, Rightarrow] \ar[u, "\sim"] & H^{r+s}(Y_{h\wh{\Z}}) \ar[u] 
\end{tikzcd}
\end{equation}
Now \eqref{eq: big ss} implies that the natural map in \eqref{eq: isom in limit} induces an isomorphism on associated gradeds, hence an isomorphism. 
\end{proof}

\noindent \textbf{More on pushforwards in cohomology.} We now turn towards studying the pushforward on cohomology. We begin by formalizing the construction of the pushforward in \S \ref{step2}. To help distinguish varieties from pro-spaces, we will use boldface letters to denote pro-spaces in this part. 

\begin{defn}
We say that a pro-space $\mbf{Y}$ has \emph{Poincar\'{e} duality} if there exists an $n$ and an isomorphism $\int \colon H^n(\mbf{Y}; \Z/2\Z) \cong \Z/2\Z$ such that the cup product induces a perfect pairing 
\[
H^i(\mbf{Y};\Z/2\Z) \times H^{n-i}(\mbf{Y};\Z/2\Z) \xrightarrow{\smile} H^n(\mbf{Y};\Z/2\Z) \xrightarrow{\int} \Z/2\Z.
\]
(This uniquely determines $n$.) We denote the non-zero element in $H^n(\mbf{Y}; \Z/2\Z)$ by $\mu_{\mbf{Y}}$, and call it the \emph{fundamental class of $\mbf{Y}$}. From $\int$ we obtain a functional 
\[
H^*(\mbf{Y}; \Z/2\Z) \xrightarrow{\mrm{project}} H^n(\mbf{Y}; \Z/2\Z)  \xrightarrow{\int} \Z/2\Z
\]
which we will also denote by $\int$. 
\end{defn}

\begin{defn}\label{push_by_duality}
Let $f \colon \mbf{W} \rightarrow \mbf{Y}$ be a map of pro-spaces with Poincar\'{e} duality, with fundamental classes $\mu_{\mbf{W}} \in H^m(\mbf{W})$ and $\mu_{\mbf{Y}} \in H^n(\mbf{Y})$. Then we have a pullback map on cohomology 
\[
f^* \colon H^*(\mbf{Y})  \rightarrow H^*(\mbf{W}) 
\]
which induces a dual map
\[
(f^*)^{\vee} \colon H^*(\mbf{W})^{\vee} \rightarrow H^*(\mbf{Y})^{\vee}.
\]
We define the  \emph{pushforward on cohomology} 
\[
f_* \colon H^i(\mbf{W}) \rightarrow H^{i+n-m}(\mbf{Y})
\]
to be the map $(f^*)^{\vee} $, where the identifications 
\[
 H^i(\mbf{W}) = H^{m-i}(\mbf{W})^{\vee} \quad \text{and} \quad  H^{i+n-m}(\mbf{Y})=	 H^{m-i}(\mbf{Y})^{\vee}
\]
are induced by Poincar\'{e} duality. It is easily checked that $f_*$ takes $\mu_{\mbf{W}}  \mapsto \mu_{\mbf{Y}}$. 

\end{defn}

In particular, if $\mbf{W}  =\Et(X)$ for some smooth proper variety $X$ over $\F_q$ or $k^{\mrm{sep}}$, then $\mbf{W}$ inherits Poincar\'{e} duality from $X$. If the map $f\co \mbf{W} \rightarrow \mbf{Y}$ arises from a map of varieties $X \rightarrow V$, then the pushforward $f_*$ tautologically agrees, under the identifications $H^*(\mbf{W}) = H^*_{\et}(X)$ and $H^*(\mbf{Y}) = H^*_{\et}(V)$, with the pushforward we defined in \S \ref{step2}. In addition, $\Et(X) \times  \Et(X)$ has Poincar\'{e} duality by the K\"{u}nneth theorem. Hence, by the horizontal homotopy equivalences in \eqref{eq: homotopy quotient diagram} we find that $(\Et_{/k}(X) \times \Et_{/k}(X))_{hG_k} $ and  $ (\Et_{/k}(X) \times \Et_{/k}(X))_{h(G_k \times G_k)} $ have Poincar\'{e} duality. It is tautological that the diagram 
\[
\xymatrix{
H^*(\Et(X \times_{\F_q} X)  )  \ar@{=}[d]& H^* (\Et(X) \times \Et(X)) \ar@{=}[d] \ar[l] \\
H^*_{\et}(X \times_{\F_q} X) & H^*_{\et}(X) \otimes H^*_{\et}(X) \ar[l]
}
\]
commutes, and dualizing it shows the following Lemma. 

\begin{lemma}\label{lem: push compatible with Et}
Let $X$ be smooth and proper over $\F_q$. Then the map
\[
\varphi_* \co H^*_{\et}(X \otimes_{\F_q} X) \rightarrow H^*_{\et}(X) \otimes H^*_{\et}(X)
\]
defined in \S \ref{step2} coincides with the pushforward in cohomology  (as defined in Definition \ref{push_by_duality}) induced by the vertical map in \eqref{eq: homotopy quotient diagram}
\[
H^*(\mrm{Oblv}(\Et_{/\F_q}(X) \times \Et_{/\F_q}(X))_{h\wh{\Z}}) \rightarrow  H^* ((\Et_{/\F_q}(X) \times \Et_{/\F_q}(X))_{h(\wh{\Z} \times \wh{\Z})})
\]
 under the identifications obtained by \S \ref{subsec: etale homotopy} and the horizontal equivalences in \eqref{eq: homotopy quotient diagram}:
\begin{align*}
H^*(\mrm{Oblv}(\Et_{/\F_q}(X) \times \Et_{/\F_q}(X))_{h\wh{\Z}}) &= H^*_{\et}(X \times_{\F_q} X) , \\
H^* ((\Et_{/\F_q}(X) \times \Et_{/\F_q}(X))_{h(\wh{\Z} \times \wh{\Z})} )&= H^*_{\et}(X) \otimes H^*_{\et}(X).
\end{align*}
\end{lemma}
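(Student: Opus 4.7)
The strategy is to observe that both $\varphi_*$ (from \S\ref{step2}) and the pushforward of Definition \ref{push_by_duality} applied to the vertical arrow $V$ in \eqref{eq: homotopy quotient diagram} are, by construction, the duals under Poincar\'{e} duality of honest pullback maps on cohomology. It therefore suffices to show that the two corresponding pullbacks agree, for then dualizing by the Poincar\'{e} dualities on both sides (noting that the chosen fundamental classes on $\Et(X \times_{\F_q} X)$ and on $\Et(X) \times \Et(X)$ are, by construction, the ones inherited from $X \times_{\F_q} X$ and from $X, X$ via the horizontal equivalences) yields the desired identification.

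By definition, $\varphi_*$ is dual to the external cup product
\[
\pr_1^* \smile \pr_2^* \co H^*_{\et}(X) \otimes H^*_{\et}(X) \to H^*_{\et}(X \times_{\F_q} X),
\]
so the task reduces to showing that $V^*$, transported across the horizontal equivalences in \eqref{eq: homotopy quotient diagram}, coincides with $\pr_1^* \smile \pr_2^*$. First I would invoke naturality of the relative \'{e}tale homotopy type construction $\Et_{/\F_q}(-)$ under maps of $\F_q$-varieties, applied to the projections $\pr_i \co X \times_{\F_q} X \to X$, together with the K\"{u}nneth-type equivalence $\Et_{/\F_q}(X \times_{\F_q} X) \simeq \Et_{/\F_q}(X) \times \Et_{/\F_q}(X)$ of pro-$G_{\F_q}$-spaces (which follows because base change to $\ol{\F}_q$ commutes with fiber products, combined with property (3) of $\Et_{/\F_q}$). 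This identifies $\Et(\pr_i)$ with the composition of the vertical map $V$ and the projection $p_i'$ from $(\Et_{/\F_q}(X) \times \Et_{/\F_q}(X))_{h(\wh{\Z} \times \wh{\Z})}$ onto the $i$-th homotopy-quotient factor $\Et_{/\F_q}(X)_{h\wh{\Z}} \simeq \Et(X)$.

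Passing to cohomology, this yields $\pr_i^* = V^* \circ (p_i')^*$. Under the K\"{u}nneth identification of $H^*((\Et_{/\F_q}(X) \times \Et_{/\F_q}(X))_{h(\wh{\Z} \times \wh{\Z})})$ with $H^*_{\et}(X) \otimes H^*_{\et}(X)$, one has $(p_1')^*(\alpha) = \alpha \otimes 1$ and $(p_2')^*(\beta) = 1 \otimes \beta$. Since $V^*$ is a ring homomorphism, multiplicativity gives
\[
V^*(\alpha \otimes \beta) = V^*\bigl((\alpha \otimes 1) \smile (1 \otimes \beta)\bigr) = \pr_1^* \alpha \smile \pr_2^* \beta,
\]
so $V^* = \pr_1^* \smile \pr_2^*$ as required.

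I expect the only genuinely delicate point to be verifying that the K\"{u}nneth equivalence $\Et_{/\F_q}(X \times_{\F_q} X) \simeq \Et_{/\F_q}(X) \times \Et_{/\F_q}(X)$ intertwines $\Et_{/\F_q}(\pr_i)$ with the honest categorical projection of the product of pro-$(G_{\F_q} \times G_{\F_q})$-spaces onto the $i$-th factor (considered as a $G_{\F_q}$-equivariant map via the diagonal embedding $G_{\F_q} \hookrightarrow G_{\F_q} \times G_{\F_q}$). Once this piece of naturality for the Harpaz--Schlank and Barnea--Schlank constructions is nailed down, the remainder is a completely formal pullback-duality manipulation.
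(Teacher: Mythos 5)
Your proposal is correct and follows essentially the same route as the paper: the paper likewise reduces the statement to the commutativity of the square of \emph{pullbacks} (which it declares tautological) and then dualizes via Poincar\'{e} duality, exactly as you do. Your expansion of the "tautological" step --- identifying $\Et(\pr_i)$ with $p_i' \circ V$ via naturality of $\Et_{/\F_q}(-)$ and then using multiplicativity of $V^*$ --- is a sound filling-in of what the paper leaves implicit.
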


There are other constructions of the pushforward in cohomology which are more useful for studying the interaction with Steenrod operations, so we prove a general criterion for recognizing when a map on cohomology coincides with the pushforward as we have defined it. 

\begin{lemma}\label{push_criterion}
Suppose $f \colon \mbf{W} \rightarrow \mbf{Y}$ is a map of (pro-)spaces with Poincar\'{e} duality so that $f_*$ is defined as in Definition \ref{push_by_duality}. Keeping the notation of Definition \ref{push_by_duality}, suppose $f'_* \colon H^*(\mbf{W}) \rightarrow H^{*+n-m}(\mbf{Y})$ is another map satisfying:
\begin{enumerate}
\item $f'_*(\mu_{\mbf{W}}) = \mu_{\mbf{Y}}$, and
\item $f'_*(x \smile f^* \gamma) = f'_* x \smile \gamma $ for all $x \in H^*(\mbf{W})$ and $\gamma \in H^*(\mbf{Y})$. 
\end{enumerate}
Then $f'_* = f_*$.
\end{lemma}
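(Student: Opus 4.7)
The plan is to verify that $f'_*$ and $f_*$ agree by testing them against all of $H^*(\mathbf{Y})$ via the Poincaré pairing on $\mathbf{Y}$, and then invoking non-degeneracy. The key point is that Definition \ref{push_by_duality} is designed precisely to encode the adjunction identity
\[
\int_{\mathbf{Y}} f_*(x) \smile \gamma = \int_{\mathbf{W}} x \smile f^*(\gamma)
\]
for all $x \in H^j(\mathbf{W})$ and $\gamma \in H^{m-j}(\mathbf{Y})$. Indeed, unwinding the definition, the map $f_*$ on $H^j(\mathbf{W})$ is obtained as the composite of Poincaré duality $H^j(\mathbf{W}) \cong H^{m-j}(\mathbf{W})^\vee$, of the dual pullback $(f^*)^\vee\colon H^{m-j}(\mathbf{W})^\vee \to H^{m-j}(\mathbf{Y})^\vee$, and of inverse Poincaré duality $H^{m-j}(\mathbf{Y})^\vee \cong H^{j+n-m}(\mathbf{Y})$; the displayed identity is simply the assertion that $(f^*)^\vee$ sends the functional $y \mapsto \int x \smile y$ to the functional $\gamma \mapsto \int f_*(x) \smile \gamma$.

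So the task reduces to showing that $f'_*$ also satisfies this identity. This is precisely where the two hypotheses get used in tandem. By hypothesis (2), the projection formula, we rewrite
\[
\int_{\mathbf{Y}} f'_*(x) \smile \gamma = \int_{\mathbf{Y}} f'_*(x \smile f^*\gamma).
\]
The cochain $x \smile f^*\gamma$ lives in top degree $m$ of $\mathbf{W}$, so it is a multiple of $\mu_{\mathbf{W}}$. By hypothesis (1), $f'_*(\mu_{\mathbf{W}}) = \mu_{\mathbf{Y}}$, so $\int_{\mathbf{Y}} \circ f'_* = \int_{\mathbf{W}}$ on $H^m(\mathbf{W})$ since both integrals send the fundamental class to $1$. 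Hence
\[
\int_{\mathbf{Y}} f'_*(x \smile f^*\gamma) = \int_{\mathbf{W}} x \smile f^*\gamma = \int_{\mathbf{Y}} f_*(x) \smile \gamma,
\]
the last equality being the defining property of $f_*$ established above.

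Combining these, $\int_{\mathbf{Y}}(f'_*(x) - f_*(x)) \smile \gamma = 0$ for every $\gamma \in H^{m-j}(\mathbf{Y})$, so the non-degeneracy of the Poincaré pairing on $\mathbf{Y}$ forces $f'_*(x) = f_*(x)$. There is no real obstacle here; the only thing to be careful about is the normalization of $\int$ on fundamental classes, which is built into the hypothesis that both $\mathbf{W}$ and $\mathbf{Y}$ have Poincaré duality in the sense given, together with condition (1). The argument is a general uniqueness principle: any degree-shifting map satisfying the projection formula and preserving the fundamental class must be the Poincaré-dual pushforward.
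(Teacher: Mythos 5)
Your proof is correct and follows essentially the same route as the paper's: reduce to the identity $\int_{\mbf{Y}} f'_*(x)\smile\gamma = \int_{\mbf{W}} x\smile f^*\gamma$ via non-degeneracy, apply the projection formula, and use condition (1) on the top-degree class $x\smile f^*\gamma$. The only cosmetic difference is that you fix the degrees of $x$ and $\gamma$ up front so that $x\smile f^*\gamma$ is already a multiple of $\mu_{\mbf{W}}$, whereas the paper allows arbitrary degrees and discards the lower-degree terms under $\int$.
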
 

\begin{remark} In other words, Condition (2) is saying that $f'_*$ is a module homomorphism for $H^*(\mbf{Y})$, for its natural action on $H^*(\mbf{Y})$ and its action on $H^*(\mbf{W})$ via $f^*$.
\end{remark}

\begin{proof}
By Poincar\'{e} duality on $\mbf{Y}$, it suffices to show that 
\begin{equation}\label{alt_push_eq1}
\int_{\mbf{Y}} f'_* x \smile \gamma = \int_{\mbf{Y}} f_* x \smile \gamma \text{ for all } x \in H^*(\mbf{W}), \gamma \in H^*(\mbf{Y}).
\end{equation}
By definition (Definition \ref{push_by_duality}), we have
\begin{equation}\label{alt_push_eq2}
\int_{\mbf{Y}} f_* x \smile \gamma = \int_{\mbf{W}} x \smile f^* \gamma,
\end{equation}
Substituting \eqref{alt_push_eq2} into \eqref{alt_push_eq1} and using condition (2), we see that we need to show that 
\begin{equation}\label{alt_push_eq3}
\int_{\mbf{Y}} f'_*(x \smile f^* \gamma) = \int_{\mbf{W}} x \smile f^* \gamma.
\end{equation}
By the definition of the fundamental class, we can write 
\[
x \smile f^* \gamma = \left(\int_{\mbf{W}} x \smile f^* \gamma \right) \mu_{\mbf{W}} + \text{(lower degree terms)},
\]
hence condition (1) of the Lemam implies that 
\begin{equation}\label{alt_push_eq4}
f_*' (x \smile f^* \gamma ) =   \left(\int_{\mbf{W}} x \smile f^* \gamma \right) \mu_{\mbf{Y}} + \text{(lower degree terms)}.
\end{equation}
Substituting \eqref{alt_push_eq4} into \eqref{alt_push_eq3}, we have
\[
\int_{\mbf{Y}} f'_*(x \smile f^* \gamma)  = \left(\int_{\mbf{W}} x \smile f^* \gamma \right) \cdot \left(\int_{\mbf{Y}} f'_* \mu_{\mbf{W}}  \right) = \int_{\mbf{W}} x \smile f^* \gamma 
\]
where the last equality follows from condition (1) that $f'_* \mu_{\mbf{W}} = \mu_{\mbf{Y}}$ and the fact that the lower degree terms ``integrate'' to $0$ by definition.
\end{proof}

We use this discussion to study the pushforward in cohomology induced by a homotopy quotient of \emph{spaces} $M \rightarrow M_{h\Z}$. The following proposition is presumably well-known, but we have included a proof since we could not find a reference. 

\begin{prop}\label{Z space version}
Let $M$ be any simplicial set with $\Z$-action. If  
\[
f \colon M \rightarrow M_{h\Z}
\]
 denotes the homotopy quotient map, then we can define a pushforward map 
 \[
 f_* \co H^i(M)  \rightarrow H^{i+1}(M_{h\Z}).
 \]
If $M$ has Poincar\'{e} duality, then so does $M_{h\Z}$ and $f_*$ agrees with the pushforward defined in Definition \ref{push_by_duality}.
 
 Moreover, for all $x \in H^*(M)$ we have 
\[
\Sq \circ f_* (x) = f_* \circ \Sq (x).
\]
\end{prop}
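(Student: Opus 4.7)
The strategy is to realize the pushforward $f_*$ as the composition of a Thom isomorphism and a ``forget-supports'' map, both of which manifestly commute with Steenrod squares. I would first replace $M_{h\Z}$ by the homotopy equivalent mapping torus of the generator $T \co M \to M$ of the $\Z$-action, in which $M$ sits as a codimension-1 subspace with ``trivial tubular neighborhood'' $U \cong M \times (-\epsilon, \epsilon)$. By excision and the Thom isomorphism for the trivial line bundle over $M$,
\[
H^{i-1}(M) \xrightarrow{\pi^*(-) \smile \tau} H^i_M(U) \xleftarrow{\sim} H^i_M(M_{h\Z}),
\]
where $\tau \in H^1_M(U) \cong \Z/2\Z$ is the Thom class and $\pi \co U \to M$ is the projection. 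I would then define $f_* \co H^{i-1}(M) \to H^i(M_{h\Z})$ as this isomorphism followed by the canonical ``forget-supports'' map $H^i_M(M_{h\Z}) \to H^i(M_{h\Z})$.

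For commutation with Steenrod squares: by the definition of Stiefel-Whitney classes (equation \eqref{eq: second defn w_i}) applied to the trivial line bundle $L$, whose total Stiefel-Whitney class is $1$, we have $\Sq(\tau) = w(L) \smile \tau = \tau$. The Cartan formula then gives
\[
\Sq(\pi^* x \smile \tau) = \Sq(\pi^* x) \smile \Sq(\tau) = \pi^* \Sq(x) \smile \tau,
\]
so the Thom isomorphism commutes with $\Sq$. The forget-supports map is induced by an inclusion of pairs of simplicial complexes $(M_{h\Z}, \emptyset) \hookrightarrow (M_{h\Z}, M_{h\Z} \setminus M)$, hence commutes with $\Sq$ by naturality. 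The identity $\Sq \circ f_* = f_* \circ \Sq$ follows.

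To identify $f_*$ with the Poincar\'{e} duality pushforward when $M$ has Poincar\'{e} duality in dimension $m$, I would first establish Poincar\'{e} duality on $M_{h\Z}$ in dimension $m+1$. The Serre spectral sequence of $M \to M_{h\Z} \to S^1$ has only two nonzero columns, yielding a filtration $F^0 \supset F^1 \supset 0$ on $H^i(M_{h\Z})$ with $F^0/F^1 \cong H^i(M)^\Z$ and $F^1 \cong H^{i-1}(M)_\Z$, compatible with cup products. Since $F^1 \smile F^1 \subset F^2 = 0$, the cup product descends to the associated graded, reducing to the Poincar\'{e} duality pairing on $M$; this pairing is $\Z$-equivariant (since the $\Z$-action on $H^m(M) = \Z/2\Z$ is trivial), so it induces a perfect pairing between $\Z$-invariants and $\Z$-coinvariants, which yields Poincar\'{e} duality on $M_{h\Z}$. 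Finally, I would apply Lemma \ref{push_criterion}: condition (1), $f_*(\mu_M) = \mu_{M_{h\Z}}$, follows by tracking generators through the Thom iso and verifying that the forget-supports map $H^{m+1}_M(M_{h\Z}) \to H^{m+1}(M_{h\Z})$ is an isomorphism (using $H^{m+1}(M_{h\Z} \setminus M) \simeq H^{m+1}(M) = 0$); condition (2), the projection formula, is automatic because both the Thom iso and the forget-supports map are morphisms of $H^*(M_{h\Z})$-modules via $f^*$.

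The main obstacle will be the careful verification of Poincar\'{e} duality on $M_{h\Z}$, specifically confirming that the pairings between the Wang graded pieces assemble into a perfect pairing on $H^*(M_{h\Z})$. This relies on the duality between group-theoretic invariants and coinvariants for $\Z$-modules over $\Z/2\Z$, which follows from the $1$-dimensional Poincar\'{e} duality of $\Z$ applied to the $\Z$-equivariant Poincar\'{e} pairing of $M$.
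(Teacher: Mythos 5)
Your proposal is correct and is essentially the paper's own argument in different clothing: the paper models $M_{h\Z}$ as the mapping torus and factors $f_*$ as the suspension isomorphism $H^*(M)\xrightarrow{\sim}\tilde H^{*+1}(S^1\wedge(M_+))$ followed by the map induced by the collapse $M_{h\Z}\to M_{h\Z}/M \cong S^1\wedge(M_+)$, which is exactly your Thom isomorphism for the trivial normal line bundle followed by the forget-supports map, and it likewise concludes via Lemma \ref{push_criterion}. Your commutation argument ($\Sq\tau=\tau$ plus Cartan plus naturality) is the same fact as the paper's appeal to stability of Steenrod squares under suspension, and your Wang-sequence verification of Poincar\'{e} duality on $M_{h\Z}$ just spells out what the paper delegates to ``the same argument as in \S\ref{subsec: Tate pairing}.''
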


\begin{proof}
We can take $\R = E\Z$ as a model for $E\Z$, so that a model for $M_{h\Z}$ is $(M \times \R) /\Z$. (The map $M \rightarrow (M \times \R) /\Z$ is what might classically be called the ``inclusion of a fiber into the mapping torus''.) With this model there is an evident homeomorphism
\[
((M \times \R )/\Z,  M) \cong (S^1 \wedge (M_+) , \pt)
\]
where the left side means the pointed space obtained from $(M \times \R )/\Z$ by collapsing $M\times \{0\}$ to a point. 

We always have a pushforward map $H^*(M) \rightarrow H^{*+1}(M_{h\Z})$ defined as the composition
\begin{equation}\label{eq: push alternative}
\begin{tikzcd}
H^*(M) \ar[rr, dashed] \ar[dr, "a"', "\sim"]  & & H^{*+1}(M_{h\Z}) \\
& H^{*+1}(S^1 \wedge (M_+)) \ar[ur, "b"]
\end{tikzcd}
\end{equation}
If $M$ has Poincar\'{e} duality then so does $M_{h\Z}$ by the same reasoning as in \S \ref{subsec: Tate pairing}, so we have fundamental classes $\mu_M$ and $\mu_{M_{h\Z}}$ and Definition \ref{push_by_duality} supplies another notion of pushforward $H^*(M) \rightarrow H^{*+1}(M_{h\Z})$. In this case, the alternate pushforward \eqref{eq: push alternative} evidently takes $\mu_M \mapsto \mu_{M_{h\Z}}$, and is an $H^{*}(M_{h\Z})$-module homomorphism since $b$ is induced by a map of spaces $M_{h\Z} \rightarrow S^1 \wedge (M_+)$. Therefore, Lemma \ref{push_criterion} shows that the two notions of pushforward coincide.	

Finally, the homomorphism $b$ commutes with Steenrod operations because it is induced by the map of spaces, and the map $a$ commutes with Steenrod operations because it is a suspension isomorphism, and Steenrod operations always commute with suspension (\S \ref{subsec: classical Steenrod}).
\end{proof}

\noindent \textbf{The punchline.} We now finally complete the proof of Proposition \ref{commute}. We transfer the Poincar\'{e} duality structure 
\[
\text{from $H^*(\Et(X \times_{\F_q} X))$ to $ H^*((\Et_{/k}(X) \times \Et_{/k}(X))_{h\Z})$ }
\]
and 
\[
\text{from $H^*(  \Et(X) \times \Et(X))$ to $ H^*((\Et_{/k}(X) \times \Et_{/k}(X))_{h(\Z  \times \Z)})$}
\]
using the horizontal isomorphisms in \eqref{eq: homotopy quotient diagram} plus Lemma \ref{battle Z vs Z/n}. Then Definition \ref{push_by_duality} furnishes a notion of pushforward 
\begin{equation}\label{battle eq 7}
H^*((\Et_{/k}(X) \times \Et_{/k}(X))_{h\Z}) \rightarrow   H^*((\Et_{/k}(X) \times \Et_{/k}(X))_{h(\Z  \times \Z)})
\end{equation}
which by Lemma \ref{lem: push compatible with Et} is compatible the map in Proposition \ref{commute} in the sense that the following diagram commutes:
\[
\xymatrix{
H^*((\Et_{/k}(X) \times \Et_{/k}(X))_{h\Z})\ar[r] \ar@{=}[d]  &  H^*((\Et_{/k}(X) \times \Et_{/k}(X))_{h(\Z  \times \Z)})\ar@{=}[d]\\
H_{\et}^*(X \times_{\F_q} X) \ar[r]_{\varphi_*} & H^*_{\et}(X) \otimes H^*_{\et}(X)
}
\]

On the other hand, we may write $(\Et_{/k}(X) \times \Et_{/k}(X))_{h\Z} = \{U_i\}_i$, with each $U_i$ being a $\Z$-space. Then by Lemma \ref{push_criterion} the map \eqref{battle eq 7} agrees with the colimit of the levelwise pushforwards 
\[
H^*(U_i) \rightarrow  H^*((U_i)_{h\Z}),
\]
each of which commutes with Steenrod operations by Lemma \ref{Z space version}. \qed

\section{The obstruction to being alternating}\label{sec: alternating}

\subsection{Lifting Wu classes to integral cohomology}\label{subsec: lifting SW}

The goal of this subsection is to prove that Wu classes lift to integral cohomology, which will turn out to be important later.

\begin{lemma}\label{lem: wu in terms of SW}
Every Wu class $v_{s}$ can be expressed as a polynomial in the Stiefel-Whitney classes $\{w_k\}$. 
\end{lemma}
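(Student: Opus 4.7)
The plan is to solve for $v$ from Wu's theorem $w = \Sq v$ (Theorem \ref{Wu theorem finite field}) by induction on degree, exploiting the fact that $\Sq$ is ``upper-triangular'' with identity on the diagonal. Writing $v = \sum_j v_j$ with $v_j \in H^j_{\et}(X; \Z/2\Z)$, the degree-$s$ component of $w = \Sq v$ reads
\[
w_s = v_s + \sum_{i=1}^{s} \Sq^i v_{s-i},
\]
since $\Sq^0 = \mrm{Id}$. Rearranging (working mod $2$, so signs are irrelevant) gives the recursion
\[
v_s = w_s + \sum_{i=1}^{s} \Sq^i v_{s-i}.
\]

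I will then run induction on $s$, with base case $v_0 = 1$, which is trivially a polynomial in the $\{w_k\}$. For the inductive step, assume $v_0, v_1, \ldots, v_{s-1}$ are all polynomials in the $\{w_k\}$; I need to show each summand $\Sq^i v_{s-i}$ on the right of the recursion is also a polynomial in the $\{w_k\}$. This follows from Lemma \ref{lem: squares of SW classes} combined with the Cartan formula: applying $\Sq^i$ to a monomial $w_{k_1} \cdots w_{k_r}$ expands, via iterated Cartan, into a sum of products of terms of the form $\Sq^{a} w_{k_t}$, each of which is a polynomial in the $\{w_k\}$ by Lemma \ref{lem: squares of SW classes}. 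Hence $\Sq^i v_{s-i}$ is a polynomial in the $\{w_k\}$, and therefore so is $v_s$.

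There is essentially no obstacle here once Wu's theorem and Lemma \ref{lem: squares of SW classes} are in place; the main content of the lemma lies in the inputs rather than in the present bookkeeping. The only small point to note is that the sum in the recursion is finite (a given class lives in bounded cohomological degree and $\Sq^i$ vanishes for $i$ exceeding the degree), so everything is a genuine polynomial rather than a formal series.
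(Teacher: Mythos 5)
Your proof is correct and is essentially identical to the paper's: both invert $w = \Sq v$ degree by degree via induction on $s$, using $\Sq^0 = \mathrm{Id}$, the Cartan formula, and Lemma \ref{lem: squares of SW classes} to see that each $\Sq^i v_{s-i}$ is again a polynomial in the $\{w_k\}$.
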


\begin{proof}
We induct on $s$. The base case is $v_0 = w_0 =1$. Consider the equation 
\[
\Sq v = w
\] 
from Theorem \ref{Wu theorem finite field}. Equating terms in cohomological degree $s$, we have 
\[
v_s + \Sq^1v_{s-1} + \ldots  = w_s.
\]
By the induction hypothesis each term $v_{s-i}$ is a polynomial in the Stiefel-Whitney classes, so by the Cartan formula for Steenrod squares (\S \ref{subsec: classical Steenrod}) and Lemma \ref{lem: squares of SW classes}, each $\Sq^i v_{s-i}$ is a polynomial in the Stiefel-Whitney classes. Then solving for $v_s$ completes the induction.
\end{proof}

\begin{cor}\label{cor: wu class lifts} The Wu class $v_{s} \in H^{s}_{\et}(X;\Z/2\Z)$ is the reduction of a class in $H^{s}_{\et}(X;\Z_2(\lceil s/2 \rceil))$.
\end{cor}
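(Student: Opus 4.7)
My plan is to combine Lemma~\ref{lem: wu in terms of SW} with Theorem~\ref{thm: chern classes}, and then control the resulting twist using the vanishing of $\alpha^2$.

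First, by Lemma~\ref{lem: wu in terms of SW}, I would write $v_s = Q(w_1, w_2, \ldots)$ as a polynomial in the Stiefel-Whitney classes of $TX$. Substituting the expression of Theorem~\ref{thm: chern classes} for each $w_k$ yields an equation $v_s = R(\alpha, \ol{c_1}, \ol{c_2}, \ldots)$ in $H^s_{\et}(X; \Z/2\Z)$, where $R$ is a $\Z/2\Z$-polynomial each of whose monomials has the form $\alpha^a \prod_i \ol{c}_{j_i}$ with $a + 2\sum_i j_i = s$ (note that the factor $(1+\alpha)$ appearing in Theorem~\ref{thm: chern classes} only contributes powers $\alpha^0$ and $\alpha^1$ to each individual $w_k$, but after multiplying several $w_k$'s together larger powers of $\alpha$ may occur). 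Because $\alpha \in H^1_{\et}(X; \mu_2)$ lifts to $\wt\alpha \in H^1_{\et}(X; \Z_2(1))$ by Remark~\ref{rem: alpha}, and each Chern class $c_j$ lives integrally in $H^{2j}_{\et}(X; \Z_2(j))$, each such monomial admits an integral lift, namely $\wt\alpha^a \prod_i c_{j_i}$, in twist $a + \sum_i j_i = (s+a)/2$.

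The one obstacle is that different monomials a priori live in different twists, so a naive integral lift of $R$ would not define a single cohomology class. The plan for resolving this is to use the fact that $\alpha$ is the pullback of a class on $\Spec \F_q$ (Remark~\ref{rem: alpha}), combined with the fact that $\Spec \F_q$ has cohomological dimension $\leq 1$ for finite \'{e}tale coefficients; this forces $\alpha^2 = 0$ in $H^2_{\et}(X; \Z/2\Z)$. Hence I may discard every monomial of $R$ in which $\alpha$ appears with multiplicity $\geq 2$ without changing its mod-$2$ class. In the resulting truncated polynomial $\wt{R}$ every monomial has $a \in \{0,1\}$; combined with the congruence $a \equiv s \pmod 2$ this forces $a = s \bmod 2$, so every monomial of $\wt{R}$ admits an integral lift in the same twist $(s+a)/2 = \lceil s/2 \rceil$.

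Therefore $\wt{R}(\wt\alpha, c_1, c_2, \ldots)$ defines a class in $H^s_{\et}(X; \Z_2(\lceil s/2 \rceil))$ whose mod-$2$ reduction is $v_s$, which is what is required.
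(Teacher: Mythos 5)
Your proposal is correct and follows essentially the same route as the paper: express $v_s$ as a polynomial in the $w_k$ via Lemma~\ref{lem: wu in terms of SW}, substitute Theorem~\ref{thm: chern classes}, use $\alpha^2=0$ to kill higher powers of $\alpha$, and lift via the integrality of the Chern classes and of $\alpha$. Your justification of $\alpha^2=0$ (pullback from $\Spec\F_q$, which has cohomological dimension $1$) and your explicit bookkeeping of the twist $(s+a)/2=\lceil s/2\rceil$ are details the paper leaves implicit, but the argument is the same.
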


\begin{proof}
By Lemma \ref{lem: wu in terms of SW} we can express  $v_{s}$ as a polynomial $P_{s}(\{w_i\})$ in just the Stiefel-Whitney classes. Using Theorem \ref{thm: chern classes}, rewrite $P_{s}(\{w_i\})$ as a polynomial in the (reductions of) Chern classes and $\alpha$.

Note that the Chern classes all live in even cohomological degree while $\alpha$ has degree 1 and $\alpha^2 = 0$. Therefore, if $s=2k$ is even then this polynomial can be written without $\alpha$, while if $s=2k+1$ is odd then it can be written in the form $\alpha P_{s}'(\{c_i\})$. In any case, Theorem \ref{thm: chern classes} tells us that the Chern classes lift to integral cohomology, and so does $\alpha$ by Remark \ref{rem: alpha}.
\end{proof}

\subsection{Proof of the main theorem}\label{subsec: proof main theorem}

We now combine the preceding ingredients to prove Theorem \ref{thm: pseudo_main}, which as already noted implies Theorem \ref{thm: main}. We wish to show that 
\[
x \smile \beta (x)  = 0  \text{ for all } x \in H^{2d}_{\et}(X;\Z/2^n\Z(d)).
\]

Theorem \ref{thm: pairing identity} tells us that
\begin{equation}\label{eq: final comp 1}
x \smile \beta (x) = \wt{\Sq}^{2d}( \beta(x)).
\end{equation}
Then Lemma \ref{lemma: etale generalized sq} implies that
\begin{equation}\label{eq: final comp 2}
\wt{\Sq}^{2d} (\beta (x)) = [2^{n-1}] \circ \Sq^{2d} (\ol{\beta (x)}),
\end{equation}
where $\ol{\beta (x)}$ denotes the reduction of $\beta(x)$ mod $2$, and the notation $[2^{n-1}]$ is as in Lemma \ref{lemma: etale generalized sq}.

By the definition of the Wu classes in Theorem \ref{Wu theorem finite field}, we have
\begin{equation}\label{eq: final comp 3}
\wt{\Sq}^{2d} (\beta (x)) = [2^{n-1}] ( v_{2d} \smile \ol{\beta (x)} ).
\end{equation}
It is immediate from the definition of $[2^{n-1}]$ that 
\begin{equation}\label{eq: final comp 4}
[2^{n-1}] (v_{2d} \smile \ol{\beta(x)}) = ([2^{n-1}] v_{2d}) \smile \beta(x).
\end{equation}
Next, since $\beta$ is a derivation we have 
\begin{equation}\label{eq: final comp 5}
 ([2^{n-1}] v_{2d})  \smile \beta(x)   = \beta([2^{n-1}] v_{2d} \smile x )  - \beta([2^{n-1}] v_{2d} ) \smile x.
\end{equation}
Stringing together \eqref{eq: final comp 1}, \eqref{eq: final comp 2}, \eqref{eq: final comp 3}, and \eqref{eq: final comp 4}, it suffices to show that \eqref{eq: final comp 5} is $0$ for all $x$. 

\begin{lemma}\label{lem: two obstructions}
We have $\beta([2^{n-1}] v_{2d} ) =  \beta_{2,2^n}(v_{2d})$ where $\beta_{2,2^n}$ is the boundary map for 
\[
0 \rightarrow \Z/2^n \Z(d) \xrightarrow{2}  \Z/2^{n+1} \Z(d)  \rightarrow \Z/2 \Z (d) \rightarrow 0.
\]
\end{lemma}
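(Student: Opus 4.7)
The plan is to derive the identity by applying naturality of the Bockstein to a morphism of short exact sequences of sheaves that interpolates between the sequence defining $\beta_{2,2^n}$ and the sequence defining $\beta$.

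First I would set up the following diagram of sheaves on $X$:
\[
\begin{tikzcd}
0 \ar[r] & \Z/2^n\Z(d) \ar[r, "2"] \ar[d, equals] & \Z/2^{n+1}\Z(d) \ar[r] \ar[d, "2^{n-1}"] & \Z/2\Z(d) \ar[r] \ar[d, "2^{n-1}"] & 0 \\
0 \ar[r] & \Z/2^n\Z(d) \ar[r, "2^n"] & \Z/2^{2n}\Z(d) \ar[r] & \Z/2^n\Z(d) \ar[r] & 0
\end{tikzcd}
\]
The top row is the defining sequence for $\beta_{2,2^n}$, and the bottom row is the defining sequence for $\beta$. The rightmost vertical arrow is precisely the coefficient map inducing $[2^{n-1}]$ on cohomology, and the leftmost vertical arrow is the identity.

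Next I would verify commutativity of the two squares by a direct computation. In the left square, both composites send $a \in \Z/2^n\Z$ to $2^n a \in \Z/2^{2n}\Z$. In the right square, writing $b \in \Z/2^{n+1}\Z$ uniquely as $b_0 + 2b_1$ with $b_0 \in \{0,1\}$, both composites send $b$ to the class of $2^{n-1} b_0$ in $\Z/2^n\Z$ (using $2^n b_1 \equiv 0 \pmod{2^n}$ on the down-then-right side). This shows the displayed diagram is a morphism of short exact sequences.

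Finally, naturality of the boundary map in the long exact cohomology sequence yields a commutative square
\[
\begin{tikzcd}
H^{2d}_{\et}(X;\Z/2\Z(d)) \ar[r, "\beta_{2,2^n}"] \ar[d, "{[2^{n-1}]}"'] & H^{2d+1}_{\et}(X;\Z/2^n\Z(d)) \ar[d, equals] \\
H^{2d}_{\et}(X;\Z/2^n\Z(d)) \ar[r, "\beta"] & H^{2d+1}_{\et}(X;\Z/2^n\Z(d))
\end{tikzcd}
\]
and evaluating at $v_{2d}$ gives exactly the stated identity. The only subtlety is choosing a lift of $[2^{n-1}]$ on the middle term that makes both squares commute; the explicit choice above (again multiplication by $2^{n-1}$) is forced and easily checked, so I do not anticipate any real obstacle.
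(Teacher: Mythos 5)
Your proposal is correct and is essentially identical to the paper's proof: the paper exhibits exactly the same morphism of short exact sequences (identity, multiplication by $2^{n-1}$, multiplication by $2^{n-1}$) and concludes by naturality of the boundary map. You have merely filled in the routine verification of commutativity that the paper leaves implicit.
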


\begin{proof}
This follows immediately from the commutative diagram:
\[
\begin{tikzcd}
0 \ar[r]  &  \Z/2^n \Z(d) \ar[r, "2"] \ar[d, equals] &   \Z/2^{n+1} \Z(d) \ar[r], \ar[d, "2^{n-1}"] &  \Z/2 \Z(d) \ar[r] \ar[d, "2^{n-1}"]  \ar[r] & 0 \\
0 \ar[r] & \Z/2^n \Z (d) \ar[r, "2^{n}"] & \Z/2^{2n} \Z (d)  \ar[r] & \Z/2^n \Z(d) \ar[r] & 0
\end{tikzcd}
\]
\end{proof}

Referring to \eqref{eq: final comp 5}, we have $\beta([2^{n-1}] v_{2d} \smile x )   = 0$ for all $x$ by Lemma \ref{lem: top boundary vanishes}. But we also have $\beta_{2,2^n}(v_{2d})  = 0$ as this is the obstruction to lifting $v_{2d}$ to $H^{2d}_{\et}(X; \Z/2^{n+1} \Z(d))$, and $v$ even lifts to $v_{2d} \in H^{2d}_{\et}(X; \Z_2(d))$ by Corollary \ref{cor: wu class lifts}. By Lemma \ref{lem: two obstructions} we also get $\beta([2^{n-1}] v_{2d}) = 0$, so the expression in \eqref{eq: final comp 5} vanishes for all $x$, which completes the proof. \qed

\subsection{A criterion for the alternation of the linking form}\label{subsec: alternating linking form}

The argument of \S \ref{subsec: proof main theorem}, and the ingredients going into it, can be adapted in a straightforward manner to study linking form of an odd-dimensional manifold (\S \ref{subsec: analogy linking form}). It yields the following conclusion, which we state in the orientable case for simplicity. 

\begin{thm}\label{thm: alternating linking form}
Let $M$ be an orientable closed manifold of dimension $4d+1$. Then the linking form on $M$ is alternating if and only if $v_{2d}$ lifts to $H^{2d}(M; \Z)$, i.e. if and only if $\wt{\beta} (v_{2d}) = 0$ where $\wt{\beta}$ is the boundary map for the short exact sequence
\[
0 \rightarrow \Z \xrightarrow{2} \Z \rightarrow \Z/2 \Z \rightarrow 0.
\]
\end{thm}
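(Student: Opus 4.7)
The plan is to execute the argument of Section \ref{subsec: proof main theorem} verbatim in the topological setting, noting that its key calculation is in fact the \emph{identity}
\[
\langle x, x\rangle_n \;=\; -\,\beta_{2, 2^n}(v_d) \smile x \qquad \text{for all } x \in H^d(M;\Z/2^n\Z),
\]
which gives both directions of the biconditional. In the algebraic Theorem~\ref{thm: main} this identity was combined with Corollary~\ref{cor: wu class lifts} to force the right-hand side to vanish; for manifolds, the lifting of $v_d$ is precisely the question at issue.

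First, I would reduce the 2-primary linking form to the auxiliary pairings $\langle x,y\rangle_n := \int x\smile\beta y$ on $H^d(M;\Z/2^n\Z)$, exactly as in Section~\ref{sec: pairings}; the topological analog of Proposition~\ref{prop: compatibility} gives that alternation of the 2-primary linking form is equivalent to alternation of $\langle\cdot,\cdot\rangle_n$ for one (equivalently any) sufficiently large $n$. Second, I would prove the topological analog of Theorem~\ref{thm: pairing identity},
\[
x \smile \beta x \;=\; \wt{\Sq}^d(\beta x) \;\in\; H^{2d+1}(M;\Z/2^n\Z),
\]
by the Bockstein-spectral-sequence argument of Section~\ref{subsec: higher bockstein}; here the higher differentials $\wt{\beta}$ and $\wt{\beta}_2$ into $H^{2d+1}(M;\Z) \cong \Z$ vanish because the target is torsion-free.

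Third, I would apply Lemma~\ref{lemma: etale generalized sq} and the classical Wu theorem for the manifold $M$ (the topological original of Theorem~\ref{Wu theorem finite field}) to identify $\wt{\Sq}^d(\beta x) = [2^{n-1}]v_d\smile\beta x$; then the Bockstein-derivation manipulation of equations~\eqref{eq: final comp 1}--\eqref{eq: final comp 5} rewrites
\[
\langle x,x\rangle_n \;=\; \beta([2^{n-1}]v_d \smile x) - \beta([2^{n-1}]v_d)\smile x.
\]
The first term vanishes by the topological analog of Lemma~\ref{lem: top boundary vanishes} (which only uses that $H^{2d+1}(M;\Z) \cong \Z$ is torsion-free), while $\beta([2^{n-1}]v_d) = \beta_{2,2^n}(v_d)$ by the identification in Section~\ref{subsec: proof main theorem}. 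By Poincar\'e duality on $M$, the cup pairing $H^{d}(M;\Z/2^n\Z) \times H^{d+1}(M;\Z/2^n\Z) \to \Z/2^n\Z$ is perfect, so $\langle\cdot,\cdot\rangle_n$ is alternating if and only if $\beta_{2,2^n}(v_d) = 0$.

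Finally, mapping the short exact sequence $0\to\Z\xrightarrow{2}\Z\to\Z/2\Z\to 0$ into $0\to\Z/2^n\Z\to\Z/2^{n+1}\Z\to\Z/2\Z\to 0$ identifies $\beta_{2,2^n}(v_d)$ with the reduction mod $2^n$ of $\wt{\beta}(v_d)\in H^{d+1}(M;\Z)$; since $M$ is compact, $H^{d+1}(M;\Z)$ is finitely generated, so $\bigcap_n 2^n H^{d+1}(M;\Z) = 0$, and hence $\wt{\beta}(v_d) = 0$ if and only if $\beta_{2,2^n}(v_d) = 0$ for all $n$. The main obstacle I anticipate is the parity case-split in the third step: for $d$ even, Lemma~\ref{lemma: etale generalized sq} applies directly, whereas for $d$ odd one must use the odd-index formulation $\wt{\Sq}^d = \beta_{2,2^n}\circ\Sq^{d-1}\circ\red_2$ from the Remark following Lemma~\ref{lemma: etale generalized sq}, and the top-degree application of Wu takes a slightly different form; in either case the same formula for $\langle x,x\rangle_n$ emerges.
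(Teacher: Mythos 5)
Your proposal is, for $d$ even, exactly the paper's proof: the paper's argument consists of the single sentence ``run \S\ref{subsec: proof main theorem} in singular cohomology to conclude that the form is alternating if and only if $\beta_{2,2^n}(v_d)=0$ for all $n$,'' followed by the observation that finite generation of $H^{d+1}(M;\Z)$ identifies $\bigcap_n \ker \beta_{2,2^n}$ with $\ker\wt{\beta}$. You have additionally made explicit the one point the paper leaves silent, namely that the ``only if'' direction requires the perfectness of the cup pairing $H^{d}(M;\Z/2^n\Z)\times H^{d+1}(M;\Z/2^n\Z)\to \Z/2^n\Z$ to convert ``$\int \beta_{2,2^n}(v_d)\smile x = 0$ for all $x$'' into ``$\beta_{2,2^n}(v_d)=0$''; that is correct and worth spelling out.

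The place where your proposal breaks is precisely the parity issue you flag at the end, and your assurance that ``in either case the same formula for $\langle x,x\rangle_n$ emerges'' is unfounded. The computation in Proposition \ref{prop: beta_2 identity} uses that the representing cochain $a$ has even degree: by the Leibniz rule $d(a\smile a)=da\smile a+(-1)^{|a|}a\smile da$, so for $|a|=d$ odd the contribution of $d(2^{n-1}a^2)$ is $2^{2n-1}(ba-ab)$, which \emph{cancels} against the $2^{2n-1}(ab-ba)$ coming from the cup-$1$ correction term rather than combining with it to produce the crucial $2^{2n}ab$. The cross term $x\smile\beta x$ therefore drops out of the $\beta_2$-identity, and no expression for $\langle x,x\rangle_n$ in terms of $v_d$ is obtained. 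This is not a repairable defect of the argument: for $d$ odd the linking form is symmetric rather than skew-symmetric, and the statement itself fails. For instance $\RR\PS^3$ is orientable, so $v_1=w_1=0$ certainly lifts to $H^1(\RR\PS^3;\Z)$, yet its linking form on $H^2(\RR\PS^3;\Z)_{\tors}\cong\Z/2\Z$ takes the value $1/2\neq 0$ on the generator. (The paper's own statement omits the hypothesis as well; the intended case, and the one used in the $5$-manifold application, is $d$ even.) So you should restrict to $d$ even, where your write-up is complete and agrees with the paper.
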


\begin{proof}
The argument in \S \ref{subsec: proof main theorem}, with the appropriate adaptations of its ingredients to singular cohomology, shows that the linking form is alternating if and only if $\beta_{2,2^n}(v_{2d}) = 0$ for all $n$. Since $H^{2d+1}(M; \Z)$ is finitely generated, the reduction maps induce an injection 
\[
H^{2d+1}(M; \Z) \hookrightarrow \varprojlim_n H^{2d+1}(M; \Z/2^n  \Z),
\]
which implies that $\ker \wt{\beta} = \bigcap_n \ker \beta_{2,2^n}$. 
\end{proof}

\begin{remark}
The statement of Theorem \ref{thm: alternating linking form} is still true for a non-orientable manifold as in \S \ref{subsec: analogy linking form}, if the short exact sequence is replaced with its twist by $\Cal{L}$. Moreover, our theory shows that the condition can be reformulated in terms of (twisted) Stiefel-Whitney classes defined analogously to those in  \S \ref{subsec: SW construction}.	
\end{remark}

\begin{example}
Let us specialize  Theorem \ref{thm: alternating linking form} to recover the known criterion, which was stated in \S \ref{subsec: analogy linking form}, for the linking form on an orientable 5-manifold to be alternating. Assume that $M$ is a smooth, orientable 5-manifold. Then Theorem \ref{thm: alternating linking form} tells us that the linking form is alternating if and only if $v_2$ lifts to integral cohomology. By Example \ref{ex: wu classes}, noting that $w_1 = 0$ since $M$ is orientable, we have $v_2 = w_2(M) \in H^2(M ; \Z/2\Z)$. Finally, it is well-known (see for example \cite[Theorem D.2]{LM}) that $M$ is $\mrm{spin}^{c}$ if and only if $w_2(M)$ lifts to integral cohomology. 
\end{example}

\appendix

\numberwithin{equation}{section}

\bibliographystyle{amsalpha}

\bibliography{Bibliography}

\end{document}